\renewenvironment{smallmatrix}[1][.2777em]{\null\,\vcenter\bgroup
  \Let@\restore@math@cr\default@tag
  \baselineskip6\ex@ \lineskip1.5\ex@ \lineskiplimit\lineskip
  \ialign\bgroup\hfil$\m@th\scriptstyle##$\hfil&&\kern#1\hfil
  $\m@th\scriptstyle##$\hfil\crcr
}{%
  \crcr\egroup\egroup\,%
}
\definecolor{tocolor}{rgb}{.1,.1,.1}
\definecolor{urlcolor}{rgb}{.2,.2,.6}
\definecolor{linkcolor}{rgb}{.1,.1,.5}
\definecolor{citecolor}{rgb}{.4,.2,.1}
\newcommandx{\thdef}[2]{
	\newaliascnt{#1}{theorem}  
	\newtheorem{#1}[#1]{#2}
	\aliascntresetthe{#1}  
	\newtheorem*{#1*}{#2}
	\expandafter\newcommand\expandafter{\csname #1autorefname\endcsname}{#2}
}
\newtheorem*{rep@theorem}{\rep@title}
\newcommand{\newreptheorem}[2]{%
\newenvironment{rep#1}[1]{%
 \def\rep@title{#2 \ref{##1}}%
 \begin{rep@theorem}}%
 {\end{rep@theorem}}}
\newtheorem{theorem}{Theorem}[section]
\theoremstyle{definition}
\theoremstyle{remark}
\theoremstyle{remark}
\newcommand{\defn}[1]{\textbf{\textit{#1}}} 
\newcommand{\spc}[1]{\mathsf{#1}} 
\newcommand{\shf}[1]{\mathcal{#1}} 
\newcommand{\RR}{\mathbb{R}}
\newcommand{\CC}{\mathbb{C}}
\newcommand{\DD}{\mathbb{D}}
\newcommand{\ZZ}{\mathbb{Z}}
\newcommand{\PP}[1][]{\mathbb{P}^{#1}}
\newcommand{\AF}[1][]{\mathbb{A}^{#1}}
\newcommand{\rbrac}[1]{\left(#1\right)} 
\newcommand{\mapdef}[5]{
	\begin{array}{ccccc}
	#1 &:& #2 &\to& #3 \\
		&&  #4 &\mapsto& #5
	\end{array}
}
\newcommandx{\fn}[2][2=]{#1\ifthenelse{\equal{#2}{}}{}{\!\rbrac{{#2}}}} 
\newcommandx{\id}[2][2=]{\fn{{\rm id}_{#1}}[#2]} 
\newcommand{\ext}[2][\bullet]{\spc{\Lambda}^{#1}{#2}} 
\newcommand{\sym}[2][\bullet]{\spc{S}^{#1}{#2}} 
\newcommandx{\End}[2][1=]{\fn{\spc{End}_{#1}}[#2]} 
\newcommandx{\Hom}[2][1=]{\fn{\spc{Hom}_{#1}}[#2]} 
\newcommandx{\Aut}[2][1=]{\fn{\spc{Aut}_{#1}}[#2]} 
\newcommandx{\image}[1]{\fn{\spc{img}}[#1]} 
\renewcommandx{\ker}[1]{\fn{\spc{ker}}[#1]} 
\newcommandx{\rank}[1]{\fn{\mathrm{rank}}[#1]} 
\newcommandx{\ann}[1]{\fn{\spc{ann}}[\spc{#1}]} 
\newcommandx{\hlgy}[3][1=\bullet,3=]{\spc{H}_{#1}^{#3}\!\rbrac{{#2}}} 
\newcommandx{\cohlgy}[3][1=\bullet,3=]{\spc{H}^{#1}_{#3}\!\rbrac{{#2}}} 
\newcommandx{\chow}[3][1=\bullet,3=]{\spc{A}^{#1}_{#3}\!\rbrac{{#2}}} 
\newcommandx{\Ext}[3][1=\bullet,3=]{\fn{\spc{Ext}^{#1}_{#3}}[{#2}]} 
\newcommandx{\Tor}[3][1=\bullet,3=]{\fn{\spc{Tor}^{#1}_{#3}}[{#2}]} 
\newcommandx{\Pic}[1]{\fn{\spc{Pic}}[{#1}]} 
\newcommandx{\chernalg}[2][1=\bullet]{\fn{\spc{Chern}^{#1}}[{#2}]} 
\newcommandx{\chern}[2][1=]{\fn{c_{#1}}[#2]} 
\newcommandx{\ch}[2][1=]{\fn{\mathrm{ch}_{#1}}[{#2}]} 
\newcommandx{\sKer}[2][1=]{ \fn{ \shf{K}er_{#1}}[{#2}] } 
\newcommandx{\sHom}[2][1=]{ \fn{ \shf{H}om_{#1}}[{#2}] } 
\newcommandx{\sEnd}[2][1=]{ \fn{ \shf{E}nd_{#1}}[{#2}] } 
\newcommandx{\sExt}[3][1=\bullet,3=]{\fn{\shf{E}xt^{#1}_{#3}}[{#2}]} 
\newcommandx{\sTor}[3][1=\bullet,3=]{\fn{\shf{T}or^{#1}_{#3}}[{#2}]} 
\newcommand{\sO}[1]{\shf{O}_{#1}}
\newcommandx{\forms}[2][1=\bullet]{\Omega^{#1}_{#2}} 
\newcommandx{\can}[1][1=]{\omega_{#1}} 
\newcommandx{\acan}[1][1=]{\omega_{#1}^{-1}} 
\newcommandx{\tshf}[1]{\shf{T}_{#1}} 
\newcommandx{\mvect}[2][1=\bullet]{ \ext[#1]{\tshf{#2}} }
\newcommandx{\der}[2][1=\bullet]{\mathscr{X}^{#1}_{#2}} 
\newcommandx{\sJet}[3][1=,2=]{\shf{J}^{#1}_{#2}#3} 
\newcommandx{\tb}[2][1=]{\spc{T}_{\!#1}{#2}} 
\newcommandx{\ctb}[2][1=]{\spc{T}_{\!#1}^*{#2}} 
\newcommandx{\lie}[2][2=]{\fn{\mathscr{L}_{#1}}[#2]} 
\newcommandx{\hook}[2][2=]{\fn{i_{#1}}[#2]} 
\newcommand{\cvf}[1]{\partial_{{#1}}} 
\newcommand{\deriv}[2]{\frac{d#1}{d#2}} 
\newcommand{\Airy}{\mathrm{Airy}}
\newcommand{\Ai}{\mathrm{Ai}}
\newcommand{\Bi}{\mathrm{Bi}}
\newcommand{\HH}{\mathbb{H}}
\renewcommand{\P}{\spc{P}} 
\newcommand{\X}{\spc{X}}
\newcommand{\Y}{\spc{Y}}
\newcommand{\Z}{\spc{Z}}
\newcommand{\U}{\spc{U}}
\newcommand{\V}{\spc{V}}
\newcommand{\C}{\spc{C}}
\newcommand{\D}{\spc{D}}
\newcommand{\R}{\spc{R}}
\renewcommand{\S}{\spc{S}}
\newcommand{\T}{\spc{T}}
\newcommand{\Sig}{\spc{\Sigma}}
\newcommand{\Tot}[1]{\fn{\spc{Tot}}[{#1}]}
\newcommand{\sD}{\shf{D}}
\newcommand{\sE}{\shf{E}}
\newcommand{\E}{\spc{E}}
\newcommand{\sL}{\shf{L}}
\newcommand{\sI}{\shf{I}} 
\newcommand{\m}{\mathfrak{m}} 
\newcommand{\sN}{\shf{N}}
\newcommand{\sV}[1]{\shf{V}#1} 
\newcommand{\sS}{\shf{S}}
\newcommand{\Par}[1]{\fn{\mathrm{Par}}[#1]}
\newcommand{\Bl}[2]{\fn{\spc{Bl}_{#2}}[#1]}
\newcommand{\EM}[2]{[#1\!:\!#2]} 
\newcommand{\sA}{\shf{A}}
\newcommand{\A}{\spc{A}}
\newcommand{\ati}[2][]{\fn{\shf{D}^{1}_{#1}}[#2]}
\newcommand{\FG}[1]{\fn{\spc{\Pi}_1}[{#1}]}
\newcommand{\GL}[1]{\fn{\spc{GL}}[#1]}
\newcommand{\PGL}[1]{\fn{\spc{PGL}}[#1]}
\newcommand{\iso}[2][1]{\fn{\mathfrak{iso}_{#1}}[#2]}
\newcommand{\Sto}[1]{\spc{Sto}_{#1}}
\newcommand{\Pair}[1]{\fn{\spc{Pair}}[{#1}]}
\newcommand{\F}{\spc{F}}
\newcommand{\G}{\spc{G}}
\renewcommand{\H}{\spc{H}}
\newcommand{\gid}{\id{}}
\newcommand{\LA}[1][]{\fn{\shf{L}ie}[#1]} 
\newcommand{\Mod}[1]{{#1}\mathrm{-mod}}
\newcommand{\Rep}[1]{\fn{\mathrm{Rep}}[#1]}
\newcommand{\airylabel}{The Airy equation, part \Roman{airycount}\addtocounter{airycount}{1}}
\newcommand{\mg}{\mathfrak{g}}
\newcommand{\red}{\mathrm{red}}
\newcommand{\sF}{\shf{F}}
\newcommand{\hX}{\widehat{\X}}
\newcommand{\hG}{\widehat{\G}}
\newcommand{\hsE}{\widehat{\sE}}
\newcommand{\hnabla}{\widehat{\nabla}}
\newcommand{\hphi}{\widehat{\phi}}
\newcommand{\hPsi}{\widehat{\Psi}}
\newcommand{\hf}{\!\hat{f}}
\newcommand{\mat}[1]{\begin{pmatrix}#1\end{pmatrix}}
\begin{document}

\title{{\huge\sffamily The Stokes groupoids}}
\author{Marco Gualtieri\footnote{\href{mailto:mgualt@math.toronto.edu}{\texttt{mgualt@math.toronto.edu}}} \and Songhao Li\footnote{\href{mailto:sli@math.toronto.edu}{\texttt{sli@math.toronto.edu}}} \and Brent Pym\footnote{\href{mailto:bpym@math.toronto.edu}{\texttt{bpym@math.toronto.edu}}}}
\date{}
\maketitle

\abstract{We construct and describe a family of groupoids over complex curves which serve as the universal domains of definition for solutions to linear ordinary differential equations with singularities.  As a consequence, we obtain a direct, functorial method for resumming formal solutions to such equations.}

\vspace{3em}
\makeatletter
\@starttoc{toc}
\makeatother
\vspace{3em}

\newcounter{airycount}
\setcounter{airycount}{1}
\pagebreak

\section{Introduction}

The Riemann--Hilbert correspondence, in its simplest form, is a natural equivalence between flat connections over a manifold and representations of its fundamental group.  

One approach to the correspondence, drawn schematically below, expresses it as a composition of two equivalences.  First, we view a flat connection on the vector bundle $\E$ over a manifold $\X$ as a representation of the Lie algebroid $\tshf{\X}$ of vector fields.  Solving the system of differential equations determined by the connection, we obtain parallel transport isomorphisms between the fibers of $\E$.  These parallel transports define a representation of the fundamental groupoid $\FG{\X}$, which is the set paths in $\X$ considered up to homotopy with fixed endpoints.  
Note that among the Lie groupoids with Lie algebroid $\tshf{\X}$, the fundamental groupoid is the ``largest'', being the unique source-simply connected one.
In this way, we obtain an equivalence between representations of $\tshf{\X}$ and representations of $\FG{\X}$, in direct analogy with the equivalence between the representation categories of a  finite-dimensional Lie algebra and its unique simply connected Lie group, a fact which follows from Lie's second theorem. 	     

The second equivalence, between the representation categories of the fundamental groupoid $\FG{\X}$ of a connected manifold $\X$ and the fundamental group $\pi_1(\X,p)$ based at a point $p\in \X$, is an instance of Morita equivalence of groupoids, and is simply given by restriction along the inclusion $\pi_1(\X,p)\hookrightarrow \FG{\X}$.  
\begin{equation*}
\begin{tikzpicture}[baseline=(current  bounding  box.center)]
\matrix (m) [matrix of math nodes, column sep=5em]
{ \Rep{\tshf{\X}} &\Rep{\FG{\X}}& \Rep{\pi_1(\X,p)}\\};
\path[latex'-latex']
(m-1-1) edge node [below] {$\cong$} (m-1-2);
\path[latex'-latex']
(m-1-2) edge node [below] {$\cong$} (m-1-3);
\end{tikzpicture}
\end{equation*}
From this point of view, the essential component of the Riemann--Hilbert map, which requires solving differential equations, is the passage from a representation of $\tshf{\X}$ to a representation of $\FG{\X}$.  In this paper, we investigate the generalization of this equivalence to the case where the connections in question are allowed to have singularities.  

Even when $\X$ is a smooth complex curve, the Riemann--Hilbert correspondence becomes much more interesting when the connections involved have poles along a divisor $\D$.  Because the existence and uniqueness theorem for solutions to ODEs does not apply at singular points, we no longer obtain a representation of $\FG{\X}$, and the connection is no longer characterized by a representation of $\pi_1(\X,p)$.  Instead, we obtain a representation of the fundamental group of the punctured curve $\X\setminus\D$, as well as local data related to the asymptotic behaviour of solutions at the poles.  This data includes Levelt filtrations in the case of regular singularities, as well as the Stokes data associated to irregular singularities.  

The main problem that we solve in this paper is to construct and describe the Lie groupoid $\FG{\X,\D}$ which replaces $\FG{\X}$ when poles bounded by the effective divisor $\D$ are allowed.  The importance of the Lie groupoid $\FG{\X,\D}$ is best explained by the fact that the fundamental solution of any meromorphic system with singularities bounded by $\D$, while singular along $\X$, is actually smooth when viewed as a function on the 2-dimensional complex manifold $\FG{\X,\D}$. In other words, the groupoid is the universal domain of definition for fundamental solutions to such systems.

One of the well-known features of meromorphic connections with irregular singularities is that the naive method of solving them in power series about the singular points usually leads to formal power series with zero radius of convergence.  Such a formal solution may be interpreted, in each of a collection of sectors surrounding the pole, as an asymptotic approximation to an actual solution to the system.  In sectorial overlaps, these solutions differ by Stokes factors.  Furthermore, there are methods, including those of Borel, \'Ecalle, and Ramis, by which the formal solution may be ``resummed'' to obtain actual solutions.  

As an application of the groupoid theory, we provide a new, more direct approach to the problem of resummation.  We show that the formal solution, when pulled back to the groupoid in the appropriate way, defines a power series in two variables which converges to a holomorphic representation.  This representation is a universal solution to the system, and may be used to obtain actual solutions in the usual sense.

\vspace{1em}
\noindent {\it Organization of the paper:}
\vspace{1ex}

In~\autoref{sec:review}, we describe the basic theory of Lie algebroids over complex curves and their representations, beginning with the simple observation that meromorphic connections with singularities bounded by an effective divisor $\D$ are nothing but holomorphic representations of the Lie algebroid $\tshf{\X}(-\D)$ that vanish on $\D$.

Sections~\ref{sec:constr} and~\ref{sec:unif} contain the main constructions: we introduce two ways to obtain groupoids integrating the algebroid $\tshf{\X}(-\D)$. The first is a blowup construction of the adjoint or ``smallest'' groupoid $\Pair{\X,\D}$ integrating $\tshf{\X}(-\D)$, based on the results of~\cite{Gualtieri2012}. The second method makes use of the uniformization of the punctured curve $\X\setminus\D$ to give an explicit description of $\FG{\X,\D}$ as an open set in a canonical projective line bundle over $\X$ equipped with a meromorphic Cartan projective connection.  Note that while $\Pair{\X,\D}$ is an algebraic Lie groupoid by construction, $\FG{\X,\D}$ is a holomorphic Lie groupoid which is algebraic only when $\X\setminus\D$ is simply connected.

Experts in meromorphic connections may wish to skip directly to the explicit examples of groupoids in~\autoref{sec:p1ex} and then to the application to the problem of resummation of formal power series in~\autoref{sec:solns}.

\vspace{1em}
\noindent {\it Acknowledgements:}
\vspace{1ex}

We thank Philip Boalch, Nigel Hitchin, Jacques Hurtubise, Alan Weinstein, Michael Wong and Peter Zograf for helpful discussions.  Marco Gualtieri was supported by an NSERC Discovery Grant and an Ontario ERA, Songhao Li was supported by an Ontario Graduate Scholarship, and Brent Pym was supported by an NSERC Canada Graduate Scholarship (Doctoral).

\section{Lie algebroids and meromorphic connections}
\label{sec:review}

Throughout the paper, $\X$ is a smooth complex curve and $\D$ is an effective divisor on $\X$.  
A morphism of curves will be taken to mean a holomorphic map between curves that is nonconstant on each component of its domain, and the pullback of a divisor under such a morphism is defined by pulling back its local defining equations.


\subsection{Lie algebroids on curves}\label{sec:liealgcur}

Recall that a Lie algebroid on the manifold $\X$ is a locally free
sheaf $\sA$ equipped with a Lie bracket $[\cdot,\cdot]$ and a bracket-preserving anchor map $a:\sA\to\tshf{\X}$ such that
the Leibniz rule holds:
\begin{equation} [u,fv] = (\lie{a(u)}f)v + f[u,v],\qquad f \in
  \sO{\X}\text{ and }u,v \in \sA.
\end{equation}
If $\X$ is a curve and the anchor $a\in\cohlgy[0]{\X,\sA^\vee\otimes\tshf{\X}}$ is nonzero, then it vanishes along an effective
divisor $\D$, and hence lifts to a surjection $\sA\to \tshf{\X}(-\D)$.  If $\sA$ has rank $1$, we conclude that $\sA = \tshf{\X}(-\D)$, with anchor given by the inclusion
$\tshf{\X}(-\D)\hookrightarrow \tshf{\X}$, and bracket inherited from
the Lie bracket of vector fields.
\begin{proposition}
  Any Lie algebroid of rank $1$ over the curve $\X$ is either a bundle of abelian Lie algebras, or is canonically equivalent to the sheaf $\tshf{\X}(-\D)$ of vector fields vanishing on an effective divisor.
\end{proposition}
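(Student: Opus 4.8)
The plan is to dichotomize according to whether the anchor vanishes identically. Since $\X$ is a curve and $\sA$ has rank $1$, both $\sA$ and $\tshf{\X}$ are line bundles, so the anchor is a global section $a \in \cohlgy[0]{\X, \sA^\vee \otimes \tshf{\X}}$ of a line bundle, and the two alternatives of the statement correspond exactly to the cases $a = 0$ and $a \neq 0$. The nonzero case is already sketched in the text preceding the statement; the remaining work is to handle the zero case and to pin down the bracket in each.

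First I would treat the degenerate case $a = 0$. Here the Leibniz rule collapses to $[u, fv] = f[u,v]$, and by antisymmetry also $[fu, v] = f[u,v]$, so the bracket is $\sO{\X}$-bilinear and hence defines a fibrewise bracket making each fibre of $\sA$ into a $1$-dimensional Lie algebra. But antisymmetry forces $[e,e]=0$ on any local generator $e$, so every such fibre is abelian and the bracket in fact vanishes identically; thus $\sA$ is a bundle of abelian Lie algebras, as claimed.

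For the main case $a \neq 0$, a nonzero section of a line bundle over a curve has a well-defined effective zero divisor $\D$, and the anchor factors canonically as $\sA \xrightarrow{\sim} \tshf{\X}(-\D) \hookrightarrow \tshf{\X}$, where the first map is an isomorphism once the zeros of $a$ are absorbed into the twist by $-\D$. It then remains to check that this isomorphism intertwines the brackets. Here I would first establish the purely local fact that $\tshf{\X}(-\D)$ is closed under the Lie bracket of vector fields: in a coordinate $z$ in which $\D$ has multiplicity $k$, a direct computation gives $[z^k f \cvf{z}, z^k g \cvf{z}] = z^{2k}(f g' - g f')\cvf{z}$, which again vanishes to order at least $k$. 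Hence $\tshf{\X}(-\D)$ is a Lie subalgebroid of $\tshf{\X}$ with the inherited bracket. Since the anchor is injective (being an isomorphism onto $\tshf{\X}(-\D)$) and bracket-preserving, the bracket on $\sA$ is completely determined by, and transported to, the standard one; so $a$ is an isomorphism of Lie algebroids, and because the identification is realized by the anchor itself it is canonical, giving $\sA \cong \tshf{\X}(-\D)$.

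The only genuinely non-formal point is this last bracket identification: one must use that the anchor is not merely surjective onto its image but injective as a map of sheaves, so that the abstract bracket on $\sA$ is pinned down uniquely by the requirement that $a$ be a Lie algebra homomorphism, together with the closure of $\tshf{\X}(-\D)$ under the vector-field bracket. Everything else follows routinely from the rank-$1$ hypothesis on a curve.
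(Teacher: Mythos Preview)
Your proof is correct and follows essentially the same approach as the paper: dichotomize on whether the anchor vanishes, and in the nonzero case use that a nonzero section of a line bundle on a curve defines an effective divisor $\D$ so that the anchor becomes an isomorphism onto $\tshf{\X}(-\D)$, with bracket inherited from that of vector fields. The paper's argument (given in the paragraph preceding the proposition rather than as a separate proof) is terser---it leaves the $a=0$ case implicit and does not spell out the bracket identification---so your verification of closure of $\tshf{\X}(-\D)$ under the bracket and of uniqueness via anchor injectivity simply fills in details the paper elides.
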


Over a fixed curve $\X$, the correspondence $\D\mapsto \tshf{\X}(-\D)$
is clearly functorial: a subdivisor $\D\leq \D'$ induces a Lie
algebroid morphism $\tshf{\X}(-\D')\to \tshf{\X}(-\D)$. The
correspondence is more interesting, however, when it involves different curves. If $f:\X\to\Y$ is a morphism of curves and
$\sA_\Y$ is a Lie algebroid over $\Y$, then the pullback algebroid
$f^!\sA_\Y$ is a Lie algebroid over $\X$ defined as the following
fiber product.
\begin{equation}\label{algpull}
  \begin{aligned}
    \xymatrix@R=1.5em@C=1.5em{f^!\sA_\Y\ar[d]\ar[r] &
      f^*\sA_\Y\ar[d]^-{a}\\ \tshf{\X}\ar[r]_-{Df} & f^*\tshf{\Y}}
  \end{aligned}
\end{equation}
The sheaves above may be described in terms of divisors as follows. First, if $\sA_\Y=\tshf{\Y}(-\D)$, then the pullback is $f^*\sA_\Y = (f^*\tshf{\Y})(-f^*\D)$.  If $\R$ is the ramification divisor of $f$, defined by the derivative $Df\in \cohlgy[0]{\X,\tshf{\X}^\vee\otimes f^*\tshf{\Y}}$, then $f^*\tshf{\Y} = \tshf{\X}(\R)$.  As a result, Diagram~\ref{algpull} implies the following.
\begin{proposition}\label{prop:pullie}
  Let $f:\X\to\Y$ be a morphism of curves and let $\sA_\Y
  = \tshf{\Y}(-\D)$ be the Lie algebroid associated to the effective divisor
   $(\Y,\D)$.  Then the Lie algebroid pullback $f^!\sA_\Y$ is given by
  \begin{equation}
    f^!\sA_\Y = f^*\sA_\Y(-\R\setminus f^*\D) = \tshf{\X}(-f^*\D\setminus\R),
  \end{equation}
  where $\R$ is the ramification divisor of $f$, and
  $\D_1\setminus\D_2$ is the divisor $\D_1 - \D_1\cap\D_2$.
\end{proposition}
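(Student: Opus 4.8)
The plan is to turn the two identifications recorded just before the statement into an explicit description of the fiber product in Diagram~\ref{algpull} as an intersection of line subsheaves, and then to evaluate that intersection pointwise by comparing orders of vanishing.

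First I would use that $Df$ exhibits $f^*\tshf{\Y}$ as $\tshf{\X}(\R)$, so that $f^*\sA_\Y = (f^*\tshf{\Y})(-f^*\D) = \tshf{\X}(\R-f^*\D)$. The key structural observation is that both maps into $f^*\tshf{\Y}$ in Diagram~\ref{algpull} are injective morphisms of line bundles: the derivative $Df:\tshf{\X}\to f^*\tshf{\Y}$ is the inclusion of the subsheaf $f^*\tshf{\Y}(-\R)$ of sections vanishing along $\R$, while the anchor $a$ is the pullback of the tautological inclusion $\tshf{\Y}(-\D)\hookrightarrow\tshf{\Y}$, hence the inclusion of $f^*\tshf{\Y}(-f^*\D)$. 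A fiber product of two monomorphisms is just the intersection of their images, so $f^!\sA_\Y$ is isomorphic, via either projection, to $f^*\tshf{\Y}(-\R)\cap f^*\tshf{\Y}(-f^*\D)$ inside $f^*\tshf{\Y}$.

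The remaining computation is local. Near a point $p$ with uniformizer $z$ and local generator $e$ of $f^*\tshf{\Y}$, write $r=\mult_p\R$ and $d=\mult_p f^*\D$. A section $\phi\,e$ lies in the image of $Df$ precisely when $v_p(\phi)\geq r$, and in the image of $a$ precisely when $v_p(\phi)\geq d$, so their intersection is cut out by $v_p(\phi)\geq\max(r,d)$. To read this as a subsheaf of $\tshf{\X}$ I transport along the isomorphism $\tshf{\X}\cong f^*\tshf{\Y}(-\R)$ given by $Df$, under which $\partial_z$ maps to $z^r e$ up to a unit; a section $\psi\,\partial_z$ then lands in the intersection exactly when $v_p(\psi)+r\geq\max(r,d)$, i.e.\ $v_p(\psi)\geq\max(0,d-r)$. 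Since $\mult_p(f^*\D\setminus\R)=d-\min(d,r)=\max(0,d-r)$, this is precisely the defining condition of $\tshf{\X}(-f^*\D\setminus\R)$, which yields the stated formula. The equality with $f^*\sA_\Y(-\R\setminus f^*\D)$ then follows from the divisor identity $(\R-f^*\D)-(\R\setminus f^*\D)=-(f^*\D\setminus\R)$, itself an instance of $A-(A\setminus B)=A\cap B$.

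I expect the only genuinely delicate point to be fixing the orientation of $Df$, namely that its image is the subsheaf $f^*\tshf{\Y}(-\R)$ of vanishing sections rather than a sheaf of sections with poles. This is pinned down by the local model $f:z\mapsto z^{k}$, for which $Df(\partial_z)=k z^{k-1}f^*\partial_w$ vanishes to order $r=k-1$ along $\R$; once this sign is correct, the rest is bookkeeping with orders of vanishing.
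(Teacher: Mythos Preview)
Your proof is correct and follows the same route the paper indicates: the paragraph preceding the proposition records the identifications $f^*\tshf{\Y}=\tshf{\X}(\R)$ and $f^*\sA_\Y=\tshf{\X}(\R-f^*\D)$ and then simply asserts that Diagram~\ref{algpull} yields the result, with no further argument. Your write-up supplies precisely the computation the paper omits---recognizing the fiber product of two inclusions of line subsheaves as their intersection and reading off the divisor via $\max(r,d)$ at each point---so there is no genuine difference in approach, only in the level of detail.
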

If $\sA_\X$ is a Lie algebroid on $\X$, then a Lie algebroid
morphism $\sA_\X\to\sA_\Y$ over the map $f:\X\to\Y$ is a base-preserving Lie algebroid morphism $\sA_\X\to f^!\sA_\Y$.  As a result of the discussion above, we obtain the following criterion for existence of such a morphism.
\begin{corollary}\label{cor:mordiv}
Let $f:\X\to\Y$ be
a morphism of curves, and let $\C$ and $\D$ be effective divisors on $\X$ and $\Y$, respectively.  Then there exists a morphism of Lie algebroids
$\tshf{\X}(-\C)\to \tshf{\Y}(-\D)$ if and only if 
\begin{equation}
\C \geq f^*\D\setminus \R.
\end{equation}
Moreover, such a morphism, if it exists, is unique.
\end{corollary}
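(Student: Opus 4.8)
The plan is to reduce the statement entirely to the elementary functoriality of $\E\mapsto\tshf{\X}(-\E)$ under inclusion of divisors (noted just before Proposition~\ref{prop:pullie}), using the pullback formula of Proposition~\ref{prop:pullie} to absorb the map $f$. By definition, a Lie algebroid morphism $\tshf{\X}(-\C)\to\tshf{\Y}(-\D)$ over $f$ is a base-preserving morphism $\tshf{\X}(-\C)\to f^!(\tshf{\Y}(-\D))$, and Proposition~\ref{prop:pullie} identifies the target with $\tshf{\X}(-\E)$, where $\E := f^*\D\setminus\R$. So I would first replace the problem by determining the base-preserving Lie algebroid morphisms $\phi\colon\tshf{\X}(-\C)\to\tshf{\X}(-\E)$ over the identity of $\X$.

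Next I would exploit that both source and target carry anchors given by their tautological inclusions into $\tshf{\X}$, which are monomorphisms of $\sO{\X}$-modules. Base-preservation forces $\phi$ to commute with these anchors, i.e.\ the composite of $\phi$ with $\tshf{\X}(-\E)\hookrightarrow\tshf{\X}$ must equal the inclusion $\tshf{\X}(-\C)\hookrightarrow\tshf{\X}$. Since the anchor of the target is injective, this condition pins down $\phi$ uniquely whenever it exists, which already yields the uniqueness assertion. For existence, the condition says exactly that the inclusion $\tshf{\X}(-\C)\hookrightarrow\tshf{\X}$ factors through $\tshf{\X}(-\E)$, equivalently that $\tshf{\X}(-\C)\subseteq\tshf{\X}(-\E)$ as subsheaves of $\tshf{\X}$. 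Comparing orders of vanishing in a local coordinate $z$ at a point $p$ — a germ $g(z)\,\partial_z$ lies in $\tshf{\X}(-\C)$ iff $\mathrm{ord}_p(g)\geq\C_p$ — this containment holds at every point iff $\C_p\geq\E_p$ for all $p$, that is iff $\C\geq\E = f^*\D\setminus\R$, as claimed.

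Finally, I would check that the unique anchor-compatible sheaf map $\phi$, when it exists, is a genuine Lie algebroid morphism, i.e.\ that it respects the brackets. This is automatic: the brackets on $\tshf{\X}(-\C)$ and $\tshf{\X}(-\E)$ are both restrictions of the Lie bracket of vector fields on $\tshf{\X}$, and $\phi$ is merely the inclusion of one bracket-closed subsheaf into another inside $\tshf{\X}$, so it intertwines them. I do not expect any serious obstacle here; the only conceptual point is to observe that injectivity of the target's anchor simultaneously supplies uniqueness for free and collapses a Lie algebroid morphism into a plain inclusion of subsheaves, after which the criterion $\C\geq f^*\D\setminus\R$ is a routine comparison of vanishing orders.
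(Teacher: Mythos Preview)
Your proposal is correct and follows essentially the same route as the paper: reduce via the definition of algebroid morphism over $f$ to a base-preserving morphism into $f^!\tshf{\Y}(-\D)=\tshf{\X}(-f^*\D\setminus\R)$ (Proposition~\ref{prop:pullie}), then use the functoriality $\E\leq\E'\Rightarrow\tshf{\X}(-\E')\to\tshf{\X}(-\E)$ noted just before that proposition. The paper leaves the corollary as an immediate consequence of that discussion, whereas you spell out explicitly why injectivity of the target anchor forces uniqueness and why the divisor inequality is equivalent to the required sheaf inclusion; this is a welcome elaboration but not a different argument.
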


A special case occurs when the divisor $\D$ contains all critical 
values of $f$, or more precisely, when $\R\leq f^*\D$. Then $f^*\sA_\Y = f^!\sA_\Y$, and if $\C = f^*\D - \R$, we obtain an isomorphism $\sA_\X\to f^*\sA_\Y$. Such a situation is an algebroid analogue of a covering map, but where the underlying morphism of curves $f$ may be a branched covering.  This special case is interesting because it allows the pushforward of connections, as described in~\textsection\ref{funcmod}.
\begin{definition}\label{def:etale}
Let $f:\X\to\Y$ be a surjective morphism of curves equipped with effective divisors $\C,\D$. 
If the ramification divisor $\R$ satisfies $\R\leq f^*\D$ and $\C = f^*\D - \R$, we say that the morphism $\tshf{\X}(-\C)\to\tshf{\Y}(-\D)$ is \defn{\'{e}tale}.
\end{definition}
%
%
\begin{example}\label{ex:branchcover}
For any finite branched covering of curves $f:\X\to \Y$, let $\R$ be the ramification divisor, let $\Delta$ be the reduced branching divisor, i.e., the sum of the critical values of $f$, and let $f^{-1}(\Delta)$ be the (reduced) divisor defined by the sum of the preimages of the critical values. Then $f^*\Delta = \R + f^{-1}(\Delta)$, giving $\R\leq f^*\Delta$ in particular.  So, we obtain an \'{e}tale algebroid morphism 
\begin{equation}
\tshf{\X}(-f^{-1}(\Delta))\to\tshf{\Y}(-\Delta).
\end{equation}  
\qed
\end{example}

\begin{example}
  Let $f_n : \AF[1] \to \AF[1]$ be the $n$-fold branched
  cover $z \mapsto z^n$, $n \ge 1$.  Then $\R = (n-1)\cdot 0$, 
  and if $\D = q\cdot 0$ for $q\geq 1$, we have $\R \leq f_n^*\D= nq\cdot 0$,
  and an \'etale algebroid morphism 
	\begin{equation}
		\tshf{\AF[1]}(-(nq-n+1)\cdot 0)\to\tshf{\AF[1]}(-q\cdot 0),
	\end{equation}  
  covering the map $f_n$.
\qed
\end{example}

A Lie algebroid $\sA$ over $\X$ has an associated singular distribution, given by the image of the anchor map $a(\sA) \subset \tshf{\X}$. This distribution integrates to a singular 
holomorphic foliation of $\X$, whose leaves are called the
\defn{orbits} of $\sA$.  The orbits of the Lie algebroids
$\tshf{\X}(-\D)$ considered above comprise the connected
components of $\X\setminus\D$, as well as the individual points of $\D$, where the stabilizer subalgebra jumps, as we now describe.
%

\begin{definition}
  If $\sA$ is a Lie algebroid on $\X$ and $p \in \X$, 
  the \defn{isotropy Lie algebra} $\iso[p]{\sA}$ is the kernel of the restriction
  to $p$ of the anchor map, viewed as a vector bundle morphism, i.e., it is defined by the following exact sequence of vector spaces:
\begin{equation}
\xymatrix{ 0 \ar[r] & \iso[p]{\sA} \ar[r]& \A_p \ar[r]^{a|_p} &
  \tb[p]{\X}.}
\end{equation}
\end{definition}

\begin{proposition}\label{prop:iso-alg}
  Let $\sA = \tshf{\X}(-\D)$.  If $p\in \X\setminus\D$, then $\iso[p]{\sA} = \{0\}$, whereas if $p \in \D$ has multiplicity $k\geq 1$, 
then $\iso[p]{\sA}$ is an abelian Lie algebra of dimension 1, and we have a natural isomorphism
\begin{equation}
\iso[p]{\sA} \cong \rbrac{\ctb[p]{\X}}^{\otimes (k-1)}.
\end{equation}
\end{proposition}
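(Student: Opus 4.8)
The plan is to reduce everything to a local computation in a coordinate $z$ centred at $p$ and then to promote the resulting description to a canonical one. Fix such a coordinate and write $\D = k\cdot p$ near $p$, where $k=0$ means $p\notin\D$. Then $\sA = \tshf{\X}(-\D)$ is freely generated near $p$ by the vector field $z^k\partial_z$, so its fiber $\A_p$ is the one-dimensional space spanned by the class of this generator. Since the anchor $a\colon\sA\to\tshf{\X}$ is the inclusion, the induced map on fibers $a|_p\colon \A_p\to\tb[p]{\X}$ sends $z^k\partial_z$ to its class in $\tshf{\X}|_p = \tshf{\X}/\m_p\tshf{\X}$, and the two cases of the proposition can be read off directly from this formula.

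First I would dispose of the vector-space statement. When $k=0$ the generator $\partial_z$ maps to the nonzero class $\partial_z|_p$, so $a|_p$ is an isomorphism and $\iso[p]{\sA} = \ker{a|_p} = \{0\}$. When $k\geq 1$ we have $z^k\in\m_p$, hence $z^k\partial_z\in\m_p\tshf{\X}$ and its class in the fiber vanishes; thus $a|_p = 0$ and $\iso[p]{\sA} = \A_p$, which is one-dimensional. To justify calling this an abelian Lie algebra I would recall that for any Lie algebroid the kernel of $a|_p$ inherits a canonical bracket: for $u,v\in\iso[p]{\sA}$ one lifts to local sections $\tilde u,\tilde v$ and sets $[u,v] := [\tilde u,\tilde v]|_p$, where the Leibniz rule together with $a(\tilde u)|_p = a(\tilde v)|_p = 0$ shows this is independent of the lifts and again lies in $\iso[p]{\sA}$. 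Since $\iso[p]{\sA}$ is one-dimensional, antisymmetry forces the bracket to vanish, so it is abelian.

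The remaining, and most delicate, step is to make the isomorphism $\iso[p]{\sA}\cong\rbrac{\ctb[p]{\X}}^{\otimes(k-1)}$ \emph{canonical} rather than coordinate-dependent. Writing $\tshf{\X}(-\D) = \tshf{\X}\otimes\sO{\X}(-\D)$ and restricting to $p$ gives $\A_p\cong\tb[p]{\X}\otimes\sO{\X}(-\D)|_p$. I would then invoke the standard canonical identification of the fiber of the twisting line bundle: near $p$ the sheaf $\sO{\X}(-\D)$ is the ideal $\m_p^k$, its fiber is $\m_p^k/\m_p^{k+1}$, and the multiplication map $\rbrac{\m_p/\m_p^2}^{\otimes k}\to\m_p^k/\m_p^{k+1}$ is an isomorphism in dimension one, yielding $\sO{\X}(-\D)|_p\cong\rbrac{\ctb[p]{\X}}^{\otimes k}$ with no choice of $z$. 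Combining this with the duality $\tb[p]{\X}\cong\rbrac{\ctb[p]{\X}}^{\vee}$ produces $\A_p\cong\rbrac{\ctb[p]{\X}}^{\otimes(k-1)}$, under which the local generator $z^k\partial_z$ corresponds to $(dz)^{\otimes(k-1)}$. The main point to verify is that every identification used is functorial, so that the local picture is independent of $z$ and assembles into the claimed canonical isomorphism; this coordinate-independence is exactly where I expect the real care to be required, even though each ingredient is individually standard.
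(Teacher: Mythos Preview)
Your proof is correct. The paper takes a somewhat different route to the canonical isomorphism: rather than decomposing $\tshf{\X}(-\D)$ as $\tshf{\X}\otimes\sO{\X}(-\D)$ and identifying the fiber of $\sO{\X}(-\D)$ at $p$ via the multiplication isomorphism $(\m_p/\m_p^2)^{\otimes k}\to\m_p^k/\m_p^{k+1}$, the paper regards the anchor $a$ as a global section of the line bundle $\sA^\vee\otimes\tshf{\X}$, observes that its $(k-1)$-jet vanishes at $p$ while its $k$-jet does not, and then uses the jet exact sequence
\[
0 \to (\forms[1]{\X})^{k}\otimes(\sA^\vee\otimes\tshf{\X}) \to \sJet[k]{(\sA^\vee\otimes\tshf{\X})} \to \sJet[k-1]{(\sA^\vee\otimes\tshf{\X})} \to 0
\]
to extract a nonzero element $(j^k a)|_p \in (\ctb[p]{\X})^{\otimes k}\otimes\A^\vee_p\otimes\tb[p]{\X}$, which furnishes the identification $\A_p\cong(\ctb[p]{\X})^{\otimes(k-1)}$. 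Your approach is more elementary and makes naturality transparent, since each step is a standard canonical identification of fibers of line bundles; you also make explicit the (easy) abelianness claim that the paper leaves implicit. The paper's approach, by contrast, is more intrinsic to the Lie-algebroid data: it derives the identification directly from the leading term of the anchor, and so would apply verbatim to any rank-one algebroid whose anchor vanishes to a prescribed order, without first having to recognize the algebroid as a twist of $\tshf{\X}$.
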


\begin{proof}
The anchor map $a\in \cohlgy[0]{\X,\sA^\vee\otimes\tshf{\X}}$ is 
nonvanishing along $\X\setminus\D$, so $\sA$ has trivial isotropy there. 
If $p \in \D$ is a point of multiplicity $k$, then the $(k-1)$-jet of
$a$ vanishes at $p$, but the $k$-jet does not.  The exact jet
sequence
$$
0 \to \rbrac{\forms[1]{\X}}^{k}\otimes(\sA^\vee\otimes\tshf{\X}) \to
  \sJet[k]{(\sA^\vee\otimes\tshf{\X})} \to
  \sJet[k-1]{(\sA^\vee\otimes\tshf{\X})} \to 0
$$
then implies that the $k$-jet $(j^ka)|_p \in (\ctb[p]{\X})^k\otimes \A^\vee_p \otimes \tb[p]{\X}$ defines an isomorphism
$$
\A_p \cong \rbrac{ \ctb[p]{\X} }^{k-1}.
$$
Since the rank of $a$ is zero at $p$, we have $\iso[p]{\sA} = \A_p$, as required.
\end{proof}

\subsection{Lie algebroid modules and meromorphic connections}\label{sec:liealgmod}

A flat connection on a vector bundle, or on a sheaf of $\sO{\X}$-modules, may be viewed as a representation of, or module over, the Lie algebroid $\tshf{\X}$ of vector fields.  The notion of a connection carries over directly to any Lie algebroid~\cite{MR1237825, MR1325261}.  As a result, each Lie algebroid $\sA$ has an associated abelian category $\Mod{\sA}$ of $\sA$-modules, as well as a subcategory $\Rep{\sA}$ of representations, i.e., $\sA$-modules that are locally free $\sO{\X}$-modules of finite rank.

\begin{definition}\label{def:arep}
Let $\sA$ be a Lie algebroid on the complex manifold $\X$.  An \defn{$\sA$-connection} on the $\sO{\X}$-module $\sE$ is a $\CC_{\X}$-linear morphism
\begin{equation}
\nabla : \sE \to \sA^\vee \otimes_{\sO{\X}} \sE
\end{equation}
satisfying the Leibniz rule $\nabla(fs) = d_{\sA}f \otimes s + f\nabla s$ for all $f \in \sO{\X}$ and $s \in \sE$.  Here, $d_\sA f = a^\vee(df) \in \sA^\vee$ is the Lie algebroid differential of $f$.  
If
\begin{equation}
\nabla_{[\xi,\eta]}= \nabla_\xi\nabla_\eta - \nabla_\eta\nabla_\xi
\end{equation}
for all $\xi,\eta \in \sA$, then $\nabla$ is said to be flat, and $(\sE,\nabla)$ is called an \defn{$\sA$-module}. If $\sE$ is a vector bundle, then we call $(\sE,\nabla)$ a \defn{representation} of $\sA$. \qed
\end{definition}

In the case of the Lie algebroid $\sA=\tshf{\X}(-\D)$ associated to an effective divisor on a curve, we have
$\sA^\vee = \forms[1]{\X}(\D)$, so an $\sA$-connection is precisely a meromorphic connection with poles bounded by $\D$. Such a connection is always flat because $\dim\X = 1$.

\begin{proposition}
A $\tshf{\X}(-\D)$-module is an
 $\sO{\X}$-module $\sE$, equipped with a meromorphic connection $\nabla : \sE \to \forms[1]{\X}(\D)\otimes \sE$.\qed
\end{proposition}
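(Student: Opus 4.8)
The plan is to show that, for the rank-one algebroid $\sA = \tshf{\X}(-\D)$, the data of an $\sA$-module collapses to that of a bare meromorphic connection, the flatness condition in \autoref{def:arep} being automatic. First I would use the identification $\sA^\vee = \forms[1]{\X}(\D)$ recorded above (obtained by dualizing $\tshf{\X}(-\D)$), which shows that the codomain $\sA^\vee \otimes_{\sO{\X}} \sE$ appearing in \autoref{def:arep} is precisely $\forms[1]{\X}(\D)\otimes_{\sO{\X}}\sE$. Thus an $\sA$-connection is exactly a meromorphic connection with poles bounded by $\D$, and the only remaining content of the proposition is that the flatness condition of \autoref{def:arep} holds automatically.

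To establish this, I would introduce the curvature operator $F_\nabla(\xi,\eta) = \nabla_\xi\nabla_\eta - \nabla_\eta\nabla_\xi - \nabla_{[\xi,\eta]}$ for $\xi,\eta \in \sA$, which measures the failure of the flatness identity. The standard computation, using the Leibniz rule for $\nabla$ together with that for the anchor and bracket of $\sA$, shows that $F_\nabla$ is $\sO{\X}$-linear in each argument rather than merely $\CC_{\X}$-linear; it is manifestly alternating. Hence $F_\nabla$ is a global section of $\ext[2]{\sA^\vee} \otimes_{\sO{\X}} \sEnd{\sE}$.

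The conclusion is then purely numerical. Since $\X$ is a curve, $\sA = \tshf{\X}(-\D)$ is locally free of rank one, so $\ext[2]{\sA^\vee} = 0$ and therefore $F_\nabla \equiv 0$: every $\sA$-connection is automatically flat, i.e.\ is an $\sA$-module. Equivalently, writing any $\eta$ locally as $f\xi$ for a local generator $\xi$ of $\sA$, tensoriality gives $F_\nabla(\xi,\eta) = f\,F_\nabla(\xi,\xi) = 0$. This identifies $\tshf{\X}(-\D)$-modules with pairs $(\sE,\nabla)$ in which $\nabla : \sE \to \forms[1]{\X}(\D)\otimes\sE$ is a meromorphic connection, as claimed. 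There is no genuine obstacle here: the only step needing attention is the tensoriality of $F_\nabla$, the routine Leibniz-rule cancellation, after which the rank-one count forces vanishing with no further work.
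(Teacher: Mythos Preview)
Your proposal is correct and matches the paper's own justification, which is simply the sentence preceding the proposition: since $\sA^\vee=\forms[1]{\X}(\D)$, an $\sA$-connection is a meromorphic connection with poles bounded by $\D$, and flatness is automatic because $\dim\X=1$ (equivalently, because $\sA$ has rank one, so $\ext[2]{\sA^\vee}=0$). You have just written out in full the curvature argument that the paper leaves implicit.
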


Every Lie algebroid carries two natural representations: one 
is the trivial module $(\sO{\X}, d_\sA)$, and the other, described in~\cite{Evens1999}, is the \defn{canonical module} $(\can[\sA],\nabla)$, given by  
\begin{equation}
\can[\sA] = \det{\sA}\otimes \can[\X],
\end{equation}
where $\can[\X] = \det{\forms[1]{\X}}$ is the canonical line bundle of $\X$, and 
\begin{equation}
\nabla_\xi (\mu\otimes \nu) = \rbrac{\lie{\xi}\mu} \otimes \nu + \mu \otimes \rbrac{ \lie{a(\xi)}\nu},
\end{equation}
for $\mu \in \det{\sA}$, $\nu \in \can[\X]$ and $\xi \in \sA$.  Here $\lie{\xi}\mu$ denotes the natural action of $\sA$ on $\det{\sA}$ obtained by extending the Lie bracket on $\sA$ to its exterior algebra. 
\begin{example}
If $\D$ is an effective divisor on the curve $\X$, then the canonical $\tshf{\X}(-\D)$-module is  
\begin{equation}
\can[\X,\D] = \tshf{\X}(-\D) \otimes \forms[1]{\X} = \sO{\X}(-\D),
\end{equation}
or in other words, the ideal sheaf of $\D$ itself. If $p \in \D$ is a point of order $k$ and $z$ is a coordinate on $\X$ centred at $p$, then the section $f = z^k \in \sO{\X}(-\D)$ gives a local trivialization, and the connection is given by
\begin{equation}
\nabla f = df = kz^{k-1} dz = k z^{-1}dz \otimes f. 
\end{equation}
Hence this connection has a first-order pole at $p$, with local flat sections given by constant multiples of the meromorphic section $z^{-k}f$.\qed
\end{example}

\begin{example}\label{torsmod}
The anchor provides a morphism of $\sA$-modules $\can[\sA]\to\sO{\X}$; in the example above, this is a flat section of $\can[\X,\D]^\vee$ defining the divisor $\D$. The cokernel of this map of line bundles is 
\begin{equation}
\sO{\D} = \sO{\X}/\sO{\X}(-\D),
\end{equation}
giving a canonical example of a $\tshf{\X}(-\D)$-module that is a torsion sheaf of $\sO{\X}$-modules.\qed
\end{example}

If $(\sE,\nabla)$ is an $\sA$-module and $p \in \X$, then the connection induces a representation of the isotropy Lie algebra $\iso[p]{\sA}$ on the fibre $\sE|_p = \sE/\m_p\sE$ of the module over $p$:
\begin{equation}\label{eq:isoact}
\iso[p]{\nabla} : \iso[p]{\sA} \to \End[\CC]{\sE|_p}.
\end{equation}

\begin{proposition}\label{prop:iso-alg-act}
Let $\sA = \tshf{\X}(-\D)$, and let $\nabla$ be an $\sA$-connection on the locally free sheaf $\sE$.  Then the isotropy representation at a point $p \in \D$ of multiplicity $k$ is the element
\begin{equation}
\iso[p]{\nabla} \in (\tb[p]{\X})^{k-1} \otimes \End[\CC]{\sE|_p}
\end{equation}
determined by the leading term in the principal part of the connection at $p$.
\end{proposition}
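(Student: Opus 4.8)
The plan is to read off the isotropy representation \eqref{eq:isoact} in a local frame and then to recognise the resulting matrix invariantly. Recall that a $\CC$-linear map $\iso[p]{\nabla}:\iso[p]{\sA}\to\End[\CC]{\sE|_p}$ is the same datum as an element of $\iso[p]{\sA}^\vee\otimes\End[\CC]{\sE|_p}$; since \autoref{prop:iso-alg} gives $\iso[p]{\sA}\cong\rbrac{\ctb[p]{\X}}^{\otimes(k-1)}$, dualising the first factor produces exactly $(\tb[p]{\X})^{k-1}\otimes\End[\CC]{\sE|_p}$, which is the target asserted in the statement. So the content is to compute this element and to match it with the leading polar coefficient of $\nabla$.

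First I would make the representation explicit. Given $\xi\in\iso[p]{\sA}$, lift it to a local section of $\sA$; because $a(\xi)$ vanishes at $p$, the Leibniz rule $\nabla_\xi(fs)=\lie{a(\xi)}[f]\,s+f\nabla_\xi s$ shows that $\nabla_\xi$ carries $\m_p\sE$ into itself and hence descends to an endomorphism of $\sE|_p$ depending only on the class of $\xi$. Thus $\iso[p]{\nabla}(\xi)=\nabla_\xi \bmod \m_p\sE$.

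Next comes the local computation. Choosing a coordinate $z$ centred at $p$ and a holomorphic frame for $\sE$, I would write $\nabla=d+A$ with connection form $A=\bigl(\sum_{j\geq -k}A_j z^j\bigr)dz$, whose principal part is $\sum_{j=-k}^{-1}A_j z^j\,dz$ and whose leading term is $A_{-k}z^{-k}\,dz$. The isotropy $\iso[p]{\sA}$ is generated by the local frame $\xi=z^k\partial_z$ of $\sA=\tshf{\X}(-\D)$. Contracting gives $\nabla_\xi=z^k\partial_z+A(\xi)$ with $A(\xi)=\sum_{j\geq -k}A_j z^{j+k}=A_{-k}+O(z)$, and since $z^k\partial_z s\in\m_p\sE$, reduction mod $\m_p\sE$ yields $\iso[p]{\nabla}(\xi)=A_{-k}$, the leading coefficient of the principal part.

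The remaining, and main, step is to package $A_{-k}$ invariantly and to verify coordinate-independence. The leading term is the image of $A$ in the fibre $\rbrac{\forms[1]{\X}(\D)\otimes\End{\sE}}|_p$, and I would identify this fibre with $(\tb[p]{\X})^{k-1}\otimes\End[\CC]{\sE|_p}$ via the canonical isomorphism $\sO{\X}(\D)|_p\cong(\tb[p]{\X})^{\otimes k}$, which comes from $\sI_p^{k}/\sI_p^{k+1}\cong\rbrac{\ctb[p]{\X}}^{\otimes k}$ and duality; tensoring with $\forms[1]{\X}|_p=\ctb[p]{\X}$ and contracting one factor produces $\forms[1]{\X}(\D)|_p\cong(\tb[p]{\X})^{k-1}$. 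Under this identification the frame $z^{-k}dz$ maps to $(\partial_z)^{\otimes(k-1)}$, so the leading term $A_{-k}z^{-k}dz$ maps to $(\partial_z)^{\otimes(k-1)}\otimes A_{-k}$. The hard part is to check that this identification is precisely dual to the one of \autoref{prop:iso-alg}: both are built from the $k$-jet of the anchor, and confirming their compatibility is exactly what makes the coordinate-dependent matrix $A_{-k}$ assemble into a well-defined element of $(\tb[p]{\X})^{k-1}\otimes\End[\CC]{\sE|_p}$ agreeing with $\iso[p]{\nabla}$. Everything else is a routine unwinding of definitions.
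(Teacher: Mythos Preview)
Your proposal is correct and follows essentially the same approach as the paper: choose a local coordinate and frame, write $\nabla = d + A(z)z^{-k}dz$, contract with the generator $z^k\partial_z$ of $\sA$, and restrict to $z=0$ to obtain the leading coefficient, then invoke the isomorphism of \autoref{prop:iso-alg} to place the result in $(\tb[p]{\X})^{k-1}\otimes\End[\CC]{\sE|_p}$. You are more explicit than the paper about well-definedness of the isotropy action and about the invariant packaging of the leading term via the fibre $\bigl(\forms[1]{\X}(\D)\otimes\End{\sE}\bigr)|_p$, whereas the paper simply asserts the transformation law; but the core argument is identical.
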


\begin{proof}
In a local coordinate $z$ centred at $p$ and a local frame for $\sE$, we have
\[
\nabla = d + A(z)z^{-k} dz,
\]
with $A$ a holomorphic matrix-valued function.  The basis element $\delta = z^k\cvf{z} \in \sA$ acts by
\[
\nabla_\delta = z^k \cvf{z} + A(z).
\]
The isotropy action is given by the restriction of this operator to $z=0$, where we obtain the matrix $A(0)$, which is the coefficient of the leading term in the principal part of the connection.  Using the isomorphism $\iso[p]{\sA}\cong(\ctb[p]{\X})^{k-1}$ of \autoref{prop:iso-alg} we see that this matrix transforms as an element of the vector space $\Hom[\CC]{{\ctb[p]{\X}^{k-1},\End[\CC]{\sE|_p}}} = (\tb[p]{\X})^{k-1} \otimes \End[\CC]{\sE|_p}$.
\end{proof}

\begin{remark}

For $k=1$, the isotropy representation $\iso[p]{\nabla}$ is simply the residue of the regular connection $\nabla$ at $p$, a well-defined endomorphism of the fibre of $\sE|_p$. 

The weights of the isotropy action~\eqref{eq:isoact} are the eigenvalues of the principal part of the connection and give privileged elements $\lambda_1,\ldots,\lambda_r \in (\tb[p]{\X})^{k-1}$.  The corresponding weights for the adjoint action on $\End[\CC]{\sE|_p}$ are given by $q_{ij} = \lambda_j - \lambda _i$.

If $k \ge 2$, the weights $q_{ij}$ define a subset of $(\tb[p]{\X})^{k-1}$, and any tangent vector $v \in \tb[p]{\X}$ such that $v^{k-1} = q_{ij}$ is called an \defn{$(i,j)$-anti-Stokes vector}.
In a coordinate $z$ centred at $p$, we may write
\begin{equation}
\lambda_i = a_i z^{-k}dz|_p = a_i(\cvf{z})^{k-1}|_p,
\end{equation}
with $a_i \in \CC$.  Then the $(i,j)$ Stokes vectors represent the directions in which the function $\exp(({a_i-a_j})/{z^{k-1}})$ decays most rapidly as $z \to 0$.  These directions play an important role in the asymptotic analysis of the flat sections of $\nabla$, and the classification of meromorphic connections in terms of generalized monodromy data; see \cite{Boalch2001}.

A natural generalization would be to consider representations $(P,\nabla)$ of the Lie algebroid $\sA$ on a principal $G$-bundle $P$.  In this case, $\nabla$ is a lift of the algebroid morphism $a:\sA\to \tshf{\X}$ to the Atiyah algebroid $\sA(P)$ of $P$:
\begin{equation}
\begin{aligned}
\xymatrix@C=3ex@R=3ex{
 & & &\sA\ar[d]^-{a}\ar[dl]_-{\nabla} & \\
0\ar[r] &\mg\ar[r] &\sA(P)\ar[r] & \tshf{\X}\ar[r] & 0
}
\end{aligned}
\end{equation}
As a result, the isotropy representation at a point $p$ of multiplicity $k$ in $\D$ gives a Lie algebra homomorphism 
\begin{equation}
\iso[p]{\nabla} : (\ctb[p]{\X})^{k-1} \to \mg_p
\end{equation}
to the fibre of the adjoint bundle of Lie algebras over $p$.
As before, each adjoint weight is an element in $(\tb[p]{\X})^{k-1}$, whose $(k-1)$st roots define Stokes vectors in the tangent space to $p$.\qed  
\end{remark}

\vspace{1ex}
\noindent{\bf Locally free modules}
\vspace{1ex}

A module over the tangent sheaf $\tshf{\X}$ that is coherent as an $\sO{\X}$-module is necessarily locally free over $\sO{\X}$~\cite[VI, Proposition 1.7]{Borel1987}---i.e., a vector bundle.  However, other Lie algebroids may admit modules that are $\sO{\X}$-coherent but not locally free, as we saw in Example~\ref{torsmod}, where the structure sheaf of the divisor $\D$ was  shown to be a $\tshf{\X}(-\D)$-module. 

More generally, we say that a closed analytic subspace $\Y \subset \X$ is an $\sA$-subspace if its ideal sheaf $\sI_\Y \subset \sO{\X}$ is preserved by the action of $\sA$ on $\sO{\X}$.  Then the structure sheaf $\sO{\Y} = \sO{\X}/\sI_\Y$ is naturally an $\sA$-module.  Furthermore, the anchor map and bracket restrict to give $\sA|_\Y$ the structure of a Lie algebroid on $\Y$, and any $\sA$-module restricts to an $\sA|_\Y$-module.

\begin{proposition}
Let $\sA$ be a Lie algebroid on the complex manifold $\X$ and let $(\sE,\nabla)$ be a coherent sheaf equipped with an $\sA$-connection.  Then the support of $\sE$ is an $\sA$-subspace.  If, moreover, the anchor map $a : \sA \to \tshf{\X}$ is surjective, then $\sE$ is locally free, hence supported on all of $\X$.
\end{proposition}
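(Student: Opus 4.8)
The plan is to treat the two assertions separately, realizing the support of $\sE$ as the vanishing locus of the annihilator ideal and then, under the surjectivity hypothesis, exhibiting an explicit local frame. For the first part I take the support to be the closed analytic subspace with ideal sheaf $\sI = \mathrm{ann}(\sE) = \set{f\in\sO{\X}}{f\sE = 0}$, which is coherent because $\sE$ is. To show $\sI$ is preserved by $\sA$, I pick a local section $f\in\sI$ and a section $\xi\in\sA$ and apply $\nabla_\xi$ to the identity $fs=0$, which holds for every $s\in\sE$. The Leibniz rule of \autoref{def:arep} gives $0=\nabla_\xi(fs)=(\lie{a(\xi)}[f])\,s + f\nabla_\xi s$, and since $f$ annihilates every section of $\sE$ the second term vanishes, leaving $(\lie{a(\xi)}[f])\,s=0$ for all $s$. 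Hence $\lie{a(\xi)}[f]\in\sI$, so $\sI$ is stable under the action $\xi\cdot f = \lie{a(\xi)}[f]$ of $\sA$ on $\sO{\X}$, and $V(\sI)$ is an $\sA$-subspace. Note that only the Leibniz rule is used here, not flatness.

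For the second statement I would prove that $\sE$ is free near each point $p$ by producing a basis. Choose sections $e_1,\dots,e_r$ of $\sE$ whose images span the fibre $\sE|_p = \sE/\m_p\sE$; by Nakayama they generate $\sE$ on a neighbourhood $U$ of $p$, with $r=\dim_\CC\sE|_p$ minimal. I claim they are $\sO{\X}$-independent, so that $\sE|_U\cong\sO{U}^r$. If not, there is a nonzero relation $\sum_i f_i e_i = 0$ in $\sE_p$; reducing modulo $\m_p$ and using that the $e_i|_p$ form a basis of $\sE|_p$ shows that every coefficient of any relation lies in $\m_p$. Among all nonzero relations I select one minimizing $m=\min_i \mathrm{ord}_p(f_i)\ge 1$, where $\mathrm{ord}_p$ is the order of vanishing at $p$.

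The crux is to use surjectivity of the anchor to lower this order. Working in local coordinates $z_1,\dots,z_n$ centred at $p$, I pick a coordinate field $\cvf{z_k}$ that strictly lowers the order of some minimal-order coefficient $f_{i_0}$, which is possible because the order-$m$ homogeneous part of $f_{i_0}$ is a nonzero polynomial of positive degree and hence has a nonvanishing partial derivative. Since $a$ is surjective I may lift $\cvf{z_k}$ to a section $\xi\in\sA$ with $a(\xi)=\cvf{z_k}$ on $U$. Expanding $\nabla_\xi e_i = \sum_j g_{ij}e_j$ (the $e_i$ generate $\sE$) and applying $\nabla_\xi$ to the relation produces a new relation $\sum_j h_j e_j = 0$ with $h_j = \cvf{z_k}f_j + \sum_i f_i g_{ij}$. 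The correction terms $\sum_i f_i g_{ij}$ have order $\ge m$ and so cannot interfere at order $m-1$, whereas $\cvf{z_k}f_{i_0}$ has order exactly $m-1$; thus the new relation is nonzero with strictly smaller $m$. Iterating until $m=0$ yields a relation with a unit coefficient, which expresses one $e_{i_0}$ in terms of the remaining generators and contradicts minimality of the generating set. Hence no nonzero relation exists, $\sE$ is free of rank $r$ near $p$, and being coherent and everywhere locally free it is a vector bundle; its rank is locally constant and positive wherever $\sE\neq 0$, so $\sE$ is supported on a union of connected components of $\X$, that is on all of $\X$ when $\X$ is connected and $\sE\neq 0$.

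The main obstacle is precisely this order-reduction step: one must check both the bookkeeping that the correction terms $\sum_i f_i g_{ij}$ stay of order $\ge m$, and, crucially, that the required coordinate derivations are realized as $a(\xi)$ for honest sections $\xi$ of $\sA$. This is exactly where the surjectivity of the anchor is used, and it is the only place the hypothesis enters. The argument parallels the proof that $\sO{\X}$-coherent $\tshf{\X}$-modules are locally free \cite[VI, Proposition 1.7]{Borel1987}, and like it relies only on the Leibniz rule, never on flatness of $\nabla$.
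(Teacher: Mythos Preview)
Your proof is correct. The first part is identical to the paper's: both apply the Leibniz rule to $f\sE=0$ to conclude that the annihilator ideal is $\sA$-stable.

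For the second part, the paper takes a slightly more economical route: rather than lifting coordinate vector fields one at a time, it chooses a local $\sO{\X}$-linear splitting $b:\tshf{\X}\to\sA$ of the surjective anchor and observes that $b^\vee\circ\nabla:\sE\to\forms[1]{\X}\otimes\sE$ is an ordinary (not necessarily flat) connection on $\sE$; it then simply invokes \cite[VI, Proposition~1.7]{Borel1987}, whose proof uses only the Leibniz rule. Your argument unpacks that same Borel proof directly in the $\sA$-setting, which is more self-contained but amounts to the same mechanism---the splitting and your individual lifts of $\cvf{z_k}$ are equivalent uses of the surjectivity hypothesis. Both approaches correctly note that flatness plays no role.
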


\begin{proof}
If $f \sE = 0$ for some $f \in \sO{\X}$ and if $\xi \in \sA$, then
\[
0 = \nabla_\xi(f \sE) = \xi(f)\sE + f \nabla_\xi \sE = \xi(f) \sE.
\]
Hence $\xi(f)$ also annihilates $\sE$, and so the support is an $\sA$-subspace.

Now suppose $a$ is surjective.  Since the question is local in nature, we may assume that there is an $\sO{\X}$-linear splitting $b : \tshf{\X} \to \sA$ with $ab = \id{\tshf{\X}}$.  Then $b^\vee \circ \nabla : \sE \to \forms[1]{\X} \otimes \sE$ is a connection on $\sE$ in the usual sense.  This connection need not be flat, and hence it does not, in general, give $\sE$ the structure of a module over the sheaf $\sD_{\X}$ of differential operators.  Nevertheless, the proof in \cite[VI, Proposition 1.7]{Borel1987} may still be applied using this connection to show that $\sE$ is locally free.
\end{proof}

\begin{corollary}
If $\Y \subset \X$ is an orbit of $\sA$, then the restriction of any $\sA$-module to $\Y$ is a locally free $\sO{\Y}$-module.
\end{corollary}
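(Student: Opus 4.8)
The plan is to deduce this from the previous Proposition by restricting all of the data to $\Y$. The point is that although the anchor $a:\sA\to\tshf{\X}$ need not be surjective on $\X$, its restriction to an orbit $\Y$ \emph{is} surjective onto $\tshf{\Y}$, simply because an orbit is by definition a maximal integral manifold of the singular distribution $a(\sA)\subset\tshf{\X}$. Hence the induced Lie algebroid $\sA|_\Y$ on $\Y$ satisfies the hypothesis of the Proposition, and we may apply it verbatim.

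First I would make the restricted data precise. Since local freeness is a local condition, I would work in a neighbourhood of a point $p\in\Y$, where the orbit is an embedded integral submanifold. As recalled in the discussion above, the anchor and bracket of $\sA$ restrict to give $\sA|_\Y$ the structure of a Lie algebroid on $\Y$, and the $\sA$-module $(\sE,\nabla)$ restricts to an $\sA|_\Y$-module $(\sE|_\Y,\nabla|_\Y)$; here $\sE|_\Y$ is coherent over $\sO{\Y}$ because it is the pullback of the coherent sheaf $\sE$ along the inclusion $\Y\hookrightarrow\X$, and pullback preserves coherence.

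Second, and this is the crux, I would verify that the restricted anchor $a|_\Y:\sA|_\Y\to\tshf{\Y}$ is surjective. By the defining property of an orbit, the image of the anchor at each point $q\in\Y$ is exactly the tangent space $\tb[q]{\Y}$; that is, $a|_\Y$ is fibrewise surjective, and hence surjective as a morphism of sheaves. With this in hand, the previous Proposition applies to the Lie algebroid $\sA|_\Y$ equipped with the coherent module $\sE|_\Y$, and concludes that $\sE|_\Y$ is locally free over $\sO{\Y}$, as desired.

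The main obstacle is foundational rather than computational: an orbit of a (possibly singular) holomorphic foliation is in general only an immersed submanifold, so one must be careful about what ``$\sA|_\Y$'', ``$\sE|_\Y$'' and ``$\sO{\Y}$'' mean. This is resolved by the fact that the statement is local and that each orbit is, in a neighbourhood of any of its points, an embedded integral manifold of the distribution $a(\sA)$; on such a chart all the restriction operations are the usual ones and the surjectivity of $a|_\Y$ is immediate. For the Lie algebroids $\tshf{\X}(-\D)$ of interest in this paper the situation is even simpler, since the orbits are the open set $\X\setminus\D$ and the isolated points of $\D$, both of which are genuine submanifolds.
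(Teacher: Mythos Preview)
Your proposal is correct and follows exactly the route the paper intends: the Corollary is stated without proof because it is meant to follow immediately from the preceding Proposition by restricting to the orbit, using the facts (recalled just above the Proposition) that $\sA|_\Y$ is a Lie algebroid on $\Y$ and any $\sA$-module restricts to an $\sA|_\Y$-module, together with the defining property of an orbit that makes the restricted anchor surjective onto $\tshf{\Y}$. Your care about the immersed-versus-embedded issue is a welcome clarification beyond what the paper spells out.
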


For example, over a curve $\X$ with an effective divisor $\D$, if $(\sE,\nabla)$ is a $\tshf{\X}(-\D)$-module, then $\sE|_{\X\setminus \D}$ is locally-free.  The rank of $\sE$ can only jump at the points of $\D$.

\subsection{Functorial operations on modules}\label{funcmod}

Meromorphic connections may be pulled back and pushed forward along morphisms of curves; in this section, we use the behaviour of Lie algebroids under morphisms described in~\textsection\ref{sec:liealgcur} to emphasize the control one has on the singularities of connections obtained by these natural operations.

Let $f:\X\to\Y$ be a morphism between curves and $(\sE,\nabla)$ an $\sA_\Y$-module, for the algebroid $\sA_\Y = \tshf{\Y}(-\D)$ defined by an effective divisor $\D$.  Using the morphism $f^!\sA_\Y\to f^*\sA_\Y$, the pullback $f^*\nabla$ defines a $f^!\sA_\Y$-module structure on $f^*\sE$. 
By~\autoref{prop:pullie}, we have $f^!\tshf{\Y}(-\D) = \tshf{\X}(-f^*\D\setminus\R)$, so we have a canonical pullback functor
\begin{equation}
f^*: \Mod{\tshf{\Y}(-\D)} \longrightarrow \Mod{\tshf{\X}(-f^*\D\setminus\R)},
\end{equation}
implying that the pullback of a connection with poles bounded by $\D$ has poles bounded by $f^*\D\setminus\R$. 

The pushforward of $\sO{\X}$-modules can be used to map $\sA_\X$-modules to $\sA_\Y$-modules when these algebroids are related by an \'etale morphism in the sense of~\autoref{def:etale}:

\begin{proposition}\label{prop:dirim}
Let $f:\X\to\Y$ be a morphism of curves equipped with effective divisors $\C$ and $\D$, respectively, such that $\tshf{\X}(-\C)\to\tshf{\Y}(-\D)$ is \'etale, i.e., $\R\leq f^*\D$ and $\C = f^*\D - \R$.  Then the direct image $f_*$ extends to a functor
\begin{equation}
f_*: \Mod{\tshf{\X}(-\C)} \longrightarrow \Mod{\tshf{\Y}(-\D)}.
\end{equation}
\end{proposition}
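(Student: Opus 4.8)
The plan is to transport the module structure along $f$ using the fact that the \'etale hypothesis identifies the two algebroids. By the discussion preceding \autoref{def:etale} together with \autoref{prop:pullie}, the conditions $\R\leq f^*\D$ and $\C = f^*\D-\R$ are exactly what guarantee that $f^*\sA_\Y = f^!\sA_\Y$ and that there is a canonical isomorphism
\begin{equation}
\sA_\X = \tshf{\X}(-\C)\ \xrightarrow{\ \cong\ }\ f^{*}\sA_\Y,
\end{equation}
where $\sA_\Y = \tshf{\Y}(-\D)$; dualizing gives $\sA_\X^\vee\cong f^{*}\sA_\Y^\vee$. Recall also that $f_*\sE$ carries a canonical $\sO{\Y}$-module structure through the ring map $\sO{\Y}\to f_*\sO{\X}$, under which $g\cdot s = (f^{*}g)\,s$.

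Given an $\sA_\X$-module $(\sE,\nabla)$, I would first read the connection $\nabla:\sE\to\sA_\X^\vee\otimes_{\sO{\X}}\sE$ as a map into $f^{*}\sA_\Y^\vee\otimes_{\sO{\X}}\sE$. Applying $f_*$ and invoking the projection formula---valid because $\sA_\Y^\vee = \forms[1]{\Y}(\D)$ is a line bundle---produces a canonical isomorphism
\begin{equation}
f_*\rbrac{f^{*}\rbrac{\sA_\Y^\vee}\otimes_{\sO{\X}}\sE}\ \cong\ \sA_\Y^\vee\otimes_{\sO{\Y}}f_*\sE.
\end{equation}
Composing, $f_*\nabla$ then defines a $\CC_\Y$-linear map $\nabla':f_*\sE\to\sA_\Y^\vee\otimes_{\sO{\Y}}f_*\sE$, and the functor on objects is $(\sE,\nabla)\mapsto(f_*\sE,\nabla')$. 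Unwound, this reads $\nabla'_\xi s = \nabla_{f^{*}\xi}s$ for local sections $\xi$ of $\sA_\Y$ and $s$ of $f_*\sE$, so the \'etale hypothesis enters precisely here: it is what makes $f^{*}\xi$ a genuine holomorphic section of $\sA_\X$ against which $\nabla$ may be contracted, with no poles introduced.

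It then remains to verify that $\nabla'$ is a flat connection and that the assignment is functorial. Flatness is automatic, since the curvature of any $\sA_\Y$-connection is an $\sO{\Y}$-bilinear alternating form on $\sA_\Y$, which vanishes because $\sA_\Y$ has rank $1$---the same reason every meromorphic connection on a curve is flat. Functoriality follows from naturality of the projection-formula isomorphism: a morphism of $\sA_\X$-modules pushes forward under $f_*$ to a map commuting with the respective $\nabla'$. The main obstacle is the Leibniz rule for $\nabla'$ over $\sO{\Y}$; this reduces to the compatibility of Lie algebroid differentials $f^{*}\rbrac{d_{\sA_\Y}g} = d_{\sA_\X}(f^{*}g)$ for $g\in\sO{\Y}$, which holds because the anchor of $f^{*}\sA_\Y$ is built from $Df$, together with the check that the projection-formula isomorphism genuinely intertwines the two $\sO$-module structures. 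Granting these compatibilities, the Leibniz rule for $\nabla'$ follows from that of $\nabla$, and the rest is formal.
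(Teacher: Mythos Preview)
Your argument is correct and follows essentially the same route as the paper's proof: identify $\sA_\X\cong f^*\sA_\Y$ via the \'etale hypothesis, push forward $\nabla$, use the projection formula to land in $\sA_\Y^\vee\otimes f_*\sE$, and verify the Leibniz rule through the compatibility $f^*(d_{\sA_\Y}g)=d_{\sA_\X}(f^*g)$. The paper's version is terser---it leaves the projection formula implicit and omits the explicit checks of flatness and functoriality---but the substance is the same.
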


\begin{proof}
Let $\sA_\X = \tshf{\X}(-\C)$ and $\sA_\Y=\tshf{\Y}(-\D)$, and let $(\sE,\nabla)\in \Mod{\sA_\X}$, so that $\nabla:\sE\to \sA_\X^\vee\otimes\sE$. Taking the direct image, we see that $f_*\nabla$ maps $f_*\sE$ to $f_*(\sA_\X^\vee\otimes \sE)$. But the morphism between $\sA_\X$ and $\sA_\Y$ is \'etale, implying that $\sA_\X = f^!\sA_\Y = f^*\sA_\Y$, giving
$$
f_*\nabla: f_*\sE\longrightarrow \sA_\Y^\vee\otimes f_*\sE.
$$
The map $f_*\nabla$ also satisfies the Leibniz rule, since the pullback of the de Rham operator $d_\Y:\sO{\Y}\to \sA_{\Y}^\vee$ coincides with the de Rham operator of $\sA_\X$.
\end{proof}
\begin{remark}
Recall that the pushforward $f_*\sE$ of an $\sO{\X}$-module carries an action of the sheaf of rings $\sS = f_*\sO{\X}$.  In the above situation of an \'etale algebroid morphism, the de Rham operator $d_\X:\sO{\X}\to \sA_\X^\vee$ has direct image 
\begin{equation*}
\sD : \sS \to \sA_\Y^\vee\otimes \sS,
\end{equation*}  
and satisfies an obvious Leibniz rule.  Furthermore, the direct image of an $\sA_\X$-module via~\autoref{prop:dirim} is characterized by the fact that it satisfies an extension of the usual Leibniz rule over $\sO{\Y}$: 
for all $h\in\sS$ and $s \in f_*\sE$, we have 
\[
\pushQED{\qed}
(f_*\nabla)(hs) = h\cdot ((f_*\nabla)s) + \sD(h)\cdot s.\qedhere
\popQED
\]
\end{remark}

\begin{example}
For any morphism of curves $f:\X\to\Y$, choose effective divisors as in~\autoref{ex:branchcover}, so that $\D$ is the reduced divisor of critical values of $f$ and $\C$ is the preimage of $\D$ in $\X$, defining an \'etale algebroid morphism.

The pushforward $f_*\sO{\X}$ of the trivial module is a vector bundle over $\Y$ of rank $d = \deg f$, and it is equipped with a meromorphic connection with simple poles along $\D$. In fact, the monodromy of this connection along $\Y\setminus\D$ is a permutation representation into $S_d$; it is precisely the representation which defines the branched covering $f:\X\to\Y$ itself.  

The other natural module over $\X$ is the canonical $\tshf{\X}(-\C)$-module, namely $\sO{\X}(-\C)$.  Its pushforward is again a vector bundle over $\Y$ of rank $d=\deg f$, equipped with a connection with simple poles along $\D$. As we saw in~\autoref{torsmod}, $\sO{\X}(-\C)$ admits a flat trivialization away from $\C$, and so the pushforwards of $\sO{\X}$ and $\sO{\X}(-\C)$ are isomorphic away from $\D$.

For a general line bundle $\sL\in\Mod{\tshf{\X}(-\C)}$, the pushforward is a rank $d$ bundle whose connection has simple poles along $\D$, but because of the possible lack of global flat sections of $\sL$ along $\X\setminus\C$, the monodromy of $f_*\sL$ along $\Y\setminus \D$ can only be said to be a quasi-permutation representation; see~\cite{MR2060367}.\qed
\end{example}

\begin{example}
Let $\X = \Y = \AF[1]$ be the affine complex line, and let $f:\X\to\Y$ be the morphism $z\mapsto w = z^2$, which lifts to an \'etale morphism from $\tshf{\X}(-3\cdot 0)$ to $\tshf{\Y}(-2\cdot 0)$.   A general $\tshf{\X}(-3\cdot 0)$-module of rank 1 has the form 
\[
\nabla = d + A z^{-3}dz,
\]
for $A\in\sO{\X}$. To compute the pushforward connection $f_*\nabla$ with respect to the $\sO{\Y}$-basis $(1, z)$, we write 
\[
\nabla 1 = A z^{-3}dz = \tfrac{1}{2} (A_0 + z A_1) w^{-2} dw,
\] 
for $A_0,A_1\in\sO{\Y}$, and similarly
\[
\nabla z = dz +  Az^{-2}dz = 
\tfrac{1}{2}(A_1w^{-1}dw + 
z (A_0 w^{-2} + w^{-1})dw) .
\]
Therefore, the pushforward connection can be written 
\[
\pushQED{\qed}
f_*\nabla = d + 
\frac{1}{2}\begin{pmatrix}A_0&w A_1\\A_1&A_0 + w\end{pmatrix}w^{-2}dw .\qedhere
\popQED
\]
\end{example}

\subsection{Differential operators of higher order}
\label{sec:higher-order-ops}

Lie algebroid connections are first-order differential operators: the action of an element of $\sA$ on a section $s \in \sE$ depends only on the one-jet of $s$.  In order to model differential operators of higher order, it is useful to recall the notion of jets for Lie algebroids as in \cite{Calaque2010}.  Associated to any Lie algebroid $\sA$ over $\X$ is a universal envelope $\sD_{\sA}$, which is the quotient of the $\sO{\X}$-tensor algebra of $\sA$ by the $\CC_\X$-linear relations
\begin{equation*}
\begin{aligned}
f \otimes \xi &= f\xi \\
\xi \otimes f - f \otimes \xi &= \lie{a(\xi)}f \\
\xi \otimes \eta - \eta \otimes \xi &= [\xi,\eta]
\end{aligned}
\end{equation*}
for all $f \in \sO{\X}$ and $\xi,\eta \in \sA$.  When $\sA = \tshf{\X}$, this construction yields the usual sheaf $\sD_\X$ of differential operators on $\X$.  The sheaf $\sD_\sA$ has a natural filtration by the \defn{order} of a differential operator: $\sD_\sA^{\le k}$ is simply the $\sO{\X}$-submodule of $\sD_\sA$ generated by the tensors of degree $\le k$. 

\begin{definition}
If $\sE$ is an $\sO{\X}$-module, then the \defn{sheaf of $k$-jets} of $\sE$ along $\sA$ is the $\sO{\X}$-module
$$
\xymatrix{
\sJet[k][\sA]{\sE} = \sHom[\sO{\X}]{\sD_\sA^{\le k}, \sE}.
}
$$
\end{definition}

When $\sA = \tshf{\X}$, we recover the usual sheaf of $k$-jets of $\sE$, which we denote by $\sJet[k]{\sE}$.  The anchor map $a : \sA \to \tshf{\X}$ induces a morphism
$$
a^*_k : \sJet[k]{\sE} \to \sJet[k][\sA]{\sE},
$$
and so given a section $s \in \sE$, we may define its $k$-jet along $\sA$ as
$$
j^k_\sA s = a_k^*(j^k s),
$$
where $j^k s$ is the $k$-jet of $s$ in the usual sense.

The usual jet sequence for an $\sO{\X}$-module $\sE$ may be pushed out along the dual anchor $\sym[k]a^*:\sym[k]{\tshf{X}^\vee}\to \sym[k]{\sA^\vee}$, yielding an analogous exact sequence for algebroid jets:
$$
\xymatrix{
0 \ar[r] & \sym[k]{\sA^\vee} \otimes \sE \ar[r] & \sJet[k][\sA]{\sE} \ar[r] & \sJet[k-1][\sA]{\sE} \ar[r] & 0.
}
$$

\begin{definition}
A  \defn{$k$th order $\sA$-connection on $\sE$} is a splitting
$$
\phi : \sJet[k][\sA]{\sE} \to \sym[k]{\sA^\vee}\otimes \sE
$$
of the $k$-jet sequence.  Equivalently, $Ds = \phi(j^{k}_\sA s)$ defines a linear differential operator of order $k$ from $\sE$ to $\sym[k]{\sA^\vee}\otimes \sE$ whose symbol is the identity. 
\end{definition}

Consider the situation for the Lie algebroid $\tshf{\X}(-\D)$ associated to an effective divisor on a curve.  If $p \in \D$ is a point of order $n$ and we choose a coordinate $z$ centred at $p$, then $\sA = \tshf{\X}(-\D)$ is generated locally by the vector field $\delta = z^n\cvf{z}$.  Let $\alpha = z^{-n}dz \in \sA^\vee$ be the dual basis.   The elements $1,\delta,\delta^2,\ldots,\delta^k$ form a basis for $\sD_\sA^{\le k}$.  With respect to the dual basis $1,\alpha,\ldots,\alpha^k$, the jet of a section $f \in \sO{\X}$ is given by
$$
j^k_\sA f = f \cdot 1 + \delta(f) \alpha + \cdots + \delta^k(f) \alpha^k.
$$
A general $k$th order $\sA$-connection on $\sO{\X}$ therefore locally has the form
\begin{equation}\label{expho}
D(f) = \phi(j^k_{\sA}f) = \rbrac{\delta^k(f) + p_{k-1}\delta^{k-1}(f) + \cdots + p_0f} \alpha^k,
\end{equation}
for holomorphic functions $p_0,\ldots,p_{k-1}$. For $k=2$, we have the familiar form
\begin{align}
D(f)= \rbrac{z^{2n} \deriv{^2\!f}{z^2} + (p_1 + nz^{n-1})z^n\deriv{f}{z} + p_0 f}\alpha^2.\label{eqn:2nd-order-op}
\end{align}

\begin{example}[\airylabel]\label{ex:airy-2nd-order}
An important early example in the study of differential equations with irregular singularities was Stokes' analysis~\cite{Stokes1847} of the Airy equation.  Recall that the Airy equation is the second-order differential equation
$$
\deriv{^2\!f}{x^2} = x f.
$$
This corresponds to the second order connection (for $\sA = \tshf{\AF[1]}$) on $\sO{\AF[1]}$ given by
$$
D(f) = \rbrac{\deriv{^2\!f}{x^2} - xf}dx^2.
$$
In the coordinate $z = x^{-1}$ on $\PP[1]$, we have
\begin{align*}
D(f) &= \rbrac{z^4 \deriv{^2\!f}{z^2} +2z^3 \deriv{f}{z}- z^{-1}f}\rbrac{\frac{dz}{z^2}}^2 \\
&= \rbrac{ z^6\deriv{^2\!f}{z^2} +(2z^2)z^3 \deriv{f}{z}- zf} \rbrac{\frac{dz}{z^3}}^2 \\
&= \rbrac{\delta^2(f) - z^2\delta(f) - zf}\rbrac{\frac{dz}{z^3}}^2,
\end{align*}
where $\delta = z^3\cvf{z}$.  Therefore $D$ extends to give a second order connection 
$$
D_{\mathrm{Airy}} : \sO{\PP[1]} \to 
\sym[2]{\sA^\vee}\otimes \sO{\PP[1]}
$$
for the Lie algebroid $\sA=\tshf{\PP[1]}(-3 \cdot \infty)$ on $\PP[1]$.\qed
\end{example}

In studying a differential equation of order $k$ for a function $f$, it is often useful to convert the equation into a system of first-order equations for $f$ and its first $k-1$ derivatives.  In general, a $k$th order connection on $\sE$ may be expressed as a usual first order connection on $\sJet[k-1][\sA]\sE$. We briefly describe this transformation, following~\cite{MR2595844}.

The natural embedding
$$
\sJet[k][\sA]{\sE} \hookrightarrow\sJet[1][\sA]{\sJet[k-1][\sA]{\sE}}
$$
fits into a commutative diagram
$$
\xymatrix@C=4ex@R=4ex{
0 \ar[r] & \sym[k]{\sA^\vee} \otimes \sE \ar[r] \ar[d] & \sJet[k][\sA]{\sE} \ar[r]\ar[d] & \sJet[k-1][\sA]{\sE} \ar[r]\ar@{=}[d] & 0 \\
0 \ar[r] & \sA^\vee \otimes \sJet[k-1][\sA]{\sE}  \ar[r]  & \sJet[1][\sA]{\sJet[k-1][\sA]{\sE}} \ar[r] & \sJet[k-1][\sA]{\sE} \ar[r] & 0 \\
}
$$
As a result, any splitting of the $k$-jet sequence for $\sE$ induces a splitting of the $1$-jet sequence for $\sJet[k-1][\sA]{\sE}$, i.e., a connection on $\sJet[k-1][\sA]{\sE}$.  On the other hand, if a usual connection 
\[
\nabla:\sJet[k-1][\sA]{\sE}\to \sA^\vee\otimes \sJet[k-1][\sA]{\sE}
\] 
comes from a $k$th order connection on $\sE$, then its composition with the projection $\pi: \sJet[k-1][\sA]{\sE}\to \sJet[k-2][\sA]{\sE}$ coincides with the Spencer operator 
\[
\mathcal{S}:\sJet[k-1][\sA]{\sE}\to \sA^\vee\otimes \sJet[k-2][\sA]{\sE}.  
\]
Hence we obtain a necessary condition for a connection on $\sJet[k-1][\sA]{\sE}$ to define a $k^{\text{th}}$ order connection on $\sE$:
\begin{equation}\label{spence}
(\id{\sA^\vee}\otimes \pi) \circ \nabla = \mathcal{S}.
\end{equation}
Apart from a constraint on the curvature of $\nabla$, which is vacuous in the 1-dimensional case, this condition is also sufficient, as shown in \cite[Theorem 2]{MR2595844}.
\begin{proposition}\label{prop:ho-conn}
Let $\sA$ be a Lie algebroid on the curve $\X$ and let $\sE$ be an $\sO{\X}$-module.  Then there is a bijective correspondence between $k$\emph{th} order $\sA$-connections on $\sE$ and $\sA$-connections $\nabla$ on $\sJet[k-1][\sA]{\sE}$ which satisfy condition~\eqref{spence}.
\end{proposition}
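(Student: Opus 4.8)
The plan is to build the bijection by transporting splittings across the two exact sequences displayed just before the statement, and to show that condition~\eqref{spence} is exactly the constraint identifying which $\sA$-connections on $\sJet[k-1][\sA]{\sE}$ arise this way. Throughout I would exploit the fact that, since $\X$ is a curve and $\sA$ has rank one, $\ext[2]{\sA^\vee}=0$, so that every $\sA$-connection is automatically flat.

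For the forward direction, I would view a $k$th order connection $\phi$ dually, as a section of the projection $\sJet[k][\sA]{\sE}\to\sJet[k-1][\sA]{\sE}$, and compose it with the embedding $\sJet[k][\sA]{\sE}\hookrightarrow\sJet[1][\sA]{\sJet[k-1][\sA]{\sE}}$. Commutativity of the right-hand square of the displayed diagram (the one whose vertical map is the identity on $\sJet[k-1][\sA]{\sE}$) shows that the composite is a section of the bottom projection, hence an $\sA$-connection $\nabla$ on $\sJet[k-1][\sA]{\sE}$. A short diagram chase then yields~\eqref{spence}, as anticipated in the text: postcomposing $\nabla$ with $\id{\sA^\vee}\otimes\pi$ kills the contribution of $\sym[k]{\sA^\vee}\otimes\sE$ and returns the tautological Spencer operator $\mathcal{S}$.

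For the reverse direction I would reconstruct $\phi$ from a connection $\nabla$ satisfying~\eqref{spence}. The cleanest route is local: in a coordinate $z$ at a point of $\D$ with generator $\delta=z^n\cvf{z}$ and dual $\alpha=z^{-n}dz$ as in~\eqref{expho}, the identification $\sJet[k-1][\sA]{\sO{\X}}\cong\sO{\X}^{\oplus k}$ under which a holonomic jet has components $(f_0,\dots,f_{k-1})=(f,\delta f,\dots,\delta^{k-1}f)$ turns $\mathcal{S}$ into the assignment $f_i\mapsto\delta(f_i)-f_{i+1}$ on the first $k-1$ components. Condition~\eqref{spence} then forces $\nabla_\delta$ into companion form in all but its final component, and that remaining component is precisely an order-$k$ operator $\delta^k+p_{k-1}\delta^{k-1}+\cdots+p_0$, recovering a splitting $\phi$ as in~\eqref{expho}. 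Checking that the two assignments are mutually inverse is then a matter of unwinding these identifications, and globalizing is immediate since the construction is canonical.

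The main obstacle is the sufficiency hidden in the reverse direction: a priori a connection obeying~\eqref{spence} need not descend to an honest $k$th order operator, the general obstruction being a curvature-type compatibility between $\nabla$ and the Spencer data, which is the content of \cite[Theorem 2]{MR2595844}. Here the essential simplification is one-dimensionality: because $\ext[2]{\sA^\vee}=0$, that curvature obstruction vanishes identically, so~\eqref{spence} alone is both necessary and sufficient. I would therefore invoke \cite[Theorem 2]{MR2595844} for the general equivalence and simply observe that its curvature hypothesis is vacuous over a curve, completing the bijection.
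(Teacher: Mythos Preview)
Your proposal is correct and matches the paper's approach: the paper offers no separate proof, instead deriving necessity from the commutative diagram of jet sequences preceding the statement and obtaining sufficiency by citing \cite[Theorem 2]{MR2595844} with the observation that the curvature constraint is vacuous over a curve. Your local companion-form computation for the reverse direction is an extra check the paper does not include, but it is consistent with and subsumed by the cited result.
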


Returning to the explicit form~\eqref{expho} of the general $k$th order $\sA$-connection on $\sO{\X}$, we see that in the basis $1,\alpha,\cdots,\alpha^{k-1}$ for $\sJet[k-1][\sA]{\sO{\X}}$, the corresponding connection is given by
\begin{align}
  \nabla = d + 
  \begin{bsmallmatrix*}[r]
    0 & -1 & 0 & \cdots & 0 &\phantom{\scriptsize{k-1}} \\
    0 & 0 & -1 & \cdots & 0 & \\
    \vdots & \vdots & \vdots & \ddots & \vdots & \\
    0 & 0 & 0 & \cdots & -1 & \\
    p_{\mathrlap{0}} & p_{\mathrlap{1}} & p_{\mathrlap{2}} & \cdots & p_{\mathrlap{{k-1}}} & 
  \end{bsmallmatrix*} z^{-n}dz.\label{eqn:jet-conn}
\end{align}

\begin{example}[\airylabel]\label{ex:airy}
Continuing from \autoref{ex:airy-2nd-order}, where $\X = \PP[1]$ and $\sA = \tshf{\PP[1]}(-3\cdot \infty) \cong \sO{\PP[1]}(-1)$, the second order Airy operator $D_{\mathrm{Airy}}$ defines a usual $\sA$-connection on the rank-two jet bundle 
$$
\sE = \sJet[1][\sA]{\sO{\PP[1]}} \cong \sO{\PP[1]} \oplus \sO{\PP[1]}(1).
$$
In the coordinate $x$ and the corresponding basis $1, dx$ for $\sE$, this connection is 
$$
\nabla^{\Airy} = d + \begin{pmatrix}
0 & -1 \\
-x & 0
\end{pmatrix}
dx,
$$
while in the coordinate $z$ at infinity, with basis $1,z^{-3}dz$ for $\sE$, the connection is 
$$
\nabla^\Airy =  d + \begin{pmatrix}
0 & -1 \\
-z & -z^2
\end{pmatrix}z^{-3}dz,
$$
verifying that $\nabla^\Airy$ is a representation of $\tshf{\PP[1]}(-3 \cdot \infty)$ on $\sO{\PP[1]} \oplus \sO{\PP[1]}(1)$.
\qed
\end{example}



\subsection{Lie algebroid connections on general fibrations}\label{sec:ehresmancon}

We have focused so far on linear connections: actions of a Lie algebroid on sheaves of $\sO{\X}$-modules.  In order to model non-linear differential equations, we must pass to other types of fibrations.  Such a theory has been developed in the smooth category by Fernandes~\cite{Fernandes2002} and generalizes immediately to the holomorphic setting.  In this section, we briefly outline those aspects we require.

Let $\pi : \P \to \X$ be a holomorphic submersion of complex manifolds, and let $\sA$ be a Lie algebroid on $\X$. As in~\textsection \ref{sec:liealgcur}, the pullback algebroid $\pi^!\sA$ is related to the relative tangent sequence as shown in the commutative diagram 
$$
\xymatrix@R=4ex@C=4ex{
0 \ar[r] & \tshf{\P/\X} \ar[r]\ar@{=}[d] & \pi^!\sA \ar[r]^{\pi_\sA}\ar[d] & \pi^*\sA \ar[r]\ar[d]\ar@{.>}@/^/[l]^\Theta & 0\\
0 \ar[r] & \tshf{\P/\X} \ar[r] & \tshf{\P} \ar[r]_{T\pi} & \pi^*\tshf{\X} \ar[r] & 0,
}
$$
where $\tshf{\P/\X}$ is the relative tangent sheaf (the vertical bundle to $\pi$).  An \defn{Ehresmann $\sA$-connection on $\pi$} is then simply a splitting $\Theta$ of $\pi_\sA$.  If $\sA = \tshf{\X}$, then $\pi^!\sA = \tshf{\P}$ and the connection is a choice of complement to $\tshf{\P/\X}$.

If $\P$ is a fibre bundle with structure group $\G$, then $\Theta$ may be required to be $\G$-equivariant, in which case we call it an algebroid \defn{$\G$-connection}. Of course, if $\G = \GL{n,\CC}$ and $\P\to\X$ is a vector bundle, an algebroid  $\G$-connection is the same as an $\sA$-module structure on $\P$.

The connection $\Theta$ composed with the anchor of $\pi^!\sA$ defines a morphism $h:\pi^*\sA\to \tshf{\P}$ whose image is called the \defn{horizontal distribution}, by analogy with the case when $\sA = \tshf{\X}$.  Notice, though, that $\tshf{\P/\X} \cap \image{h} \ne 0$ in general.  Nevertheless,  every section $\xi \in \sA$ has a horizontal lift $h(\pi^*\xi) \in \tshf{\P}$.
As for usual Ehresmann connections, $\Theta$ has a curvature tensor $F_\Theta$ in $\wedge^2 (\pi^*\sA)^\vee\otimes \tshf{\P/\X}$; if this tensor vanishes, then we say that the connection is \defn{flat}, and in this case the horizontal distribution integrates to a possibly singular holomorphic foliation called the \defn{horizontal foliation}.

For the Lie algebroid $\sA = \tshf{\X}(-\D)$ associated to an effective divisor on a curve, the pullback $\pi^!\sA$, defined as the fiber product of $\tshf{\P}$ with $\pi^*\sA$ over $\pi^*\tshf{\X}$, can be expressed as
\[
\pi^! \sA = \tshf{\P}(- \log \pi^*\D), 
\]
where 
$\tshf{\P}(- \log \pi^*\D) = \ker{\tshf{\P} \to \pi^* \tshf{\X}/\tshf{\X}(-\D)}
$
is the sheaf of vector fields on $\P$ which are tangent to the fibres of $\pi$ over $\D$ to the prescribed order.  If $(x,y_1,\ldots,y_d)$ are coordinates on $\P$ such that $\pi$ is the projection on the first component and $x$ is a coordinate on $\X$ centred at a point $p \in \D$ of multiplicity $k$, then the vector fields $x^k\cvf{x},\cvf{y_1},\ldots,\cvf{y_d}$ form a basis of sections of $\tshf{\P}(- \log \pi^*\D)$ in this chart.  In particular, when $k=1$ it is the usual sheaf of logarithmic vector fields, as suggested by the notation.  Therefore, the horizontal leaves of any $\sA$-connection, while they are generically transverse to the fibres of $\pi$, must be \emph{tangent} to the fibers over $\D$.


\begin{example}
If $\sE \in \Rep{\sA}$ is a representation of $\sA$ of rank $r$, then the projective bundle $\PP(\sE) \to \X$ inherits a natural algebroid $\PGL{r,\CC}$-connection whose horizontal leaves are the projections of the horizontal leaves on the total space of ${\sE}$. \qed
\end{example}

\subsection{Meromorphic projective structures}\label{sec:meroproj}

The uniformization theorem for Riemann surfaces identifies the universal cover $\tilde\X$ of a Riemann surface $\X$ with either $\PP[1]$, $\AF[1]$, or the upper half plane $\HH$. The embedding  $\tilde \X\subset \PP[1]$ may be used to endow $\X$ with a covering by charts $(U_i,\varphi_i:U_i\to\PP[1])$ with the property that the gluing maps $g_{ij}=\varphi_j\varphi_i^{-1}$ are constant elements of $\PGL{2,\CC}$.
We may then view this data as defining a
$\PP[1]$-bundle $\P\to\X$ equipped with a flat connection and a section $\varphi:\X\to \P$ that, far from being flat, defines an isomorphism $\varphi^*\tshf{\P/\X}\cong \tshf{\X}$.
In this way, the problem of finding the uniformization of $\X$ may be understood as a search for an appropriate flat projective bundle with section; this observation was at the heart of one of the first approaches to uniformization by Fuchs and Poincar\'e.  

Among other things, Gunning explained in \cite{MR0207978} that the existence of the section $\varphi$ implies that $\P$ may be expressed as the projectivization of a rank 2 bundle $\sE$ which, if chosen correctly, has a flat connection whose projectivization recovers the given flat structure on $\P$. The bundle $\sE$ is chosen to be the first jet bundle of any square root of the anti-canonical line bundle $\omega_\X^{-1/2}$; the jet sequence
\begin{equation}
\xymatrix{0\ar[r] &\tshf{\X}^\vee\otimes\omega_\X^{-1/2}\ar[r] & \sJet[1]{\omega_\X^{-1/2}}\ar[r] & \omega_\X^{-1/2}\ar[r] & 0}
\end{equation}
defines the section $\varphi:\X\to \P=\PP{\sE}$, which then has the property $\varphi^*\tshf{\P/\X}\cong \tshf{\X}$. Finally, the flat connection on $\sE=\sJet[1]{\omega_\X^{-1/2}}$ actually derives from a certain second order connection (a Cartan projective connection) on the line bundle $\omega_\X^{-1/2}$, in the manner described by~\autoref{prop:ho-conn}.  In conclusion, we see that in this approach, the uniformization of $\X$ may be described in terms of a very particular second order ordinary differential equation on the sections of a square root of the canonical line bundle. 
 
The theory of projective structures outlined above has an analog for curves $\X$ equipped with effective divisors $\D$, in which the tangent sheaf is replaced with the Lie algebroid $\sA = \tshf{\X}(-\D)$.   
\begin{definition}\label{def:mer-ps}
Let $\omega_\X^{-1/2}$ be a square root of the anti-canonical bundle of the curve $\X$, and let $\D$ be an effective divisor, with associated Lie algebroid $\sA= \tshf{\X}(-\D)$.  A \defn{projective $\sA$-connection} on $(\X,\D)$ is a second order $\sA$-connection
$$
D : {\can[\X]^{{-1}/{2}}} \to \sym[2]{\sA^\vee} \otimes \can[\X]^{{-1}/{2}},
$$
such that the $\sA$-connection on the jet bundle of $\omega_\X^{-1/2}$ induced via~\autoref{prop:ho-conn}
has determinant equal to the dual of the canonical module given in~\textsection\ref{sec:liealgmod}, i.e., 
$$
\det \sJet[1][\sA]{\can[\X]^{{-1}/{2}}} \cong \sO{\X}(\D).
$$
as $\sA$-modules.
\end{definition}
Such a projective $\sA$-connection induces an $\sA$-connection on the rank-two bundle $\sJet[1][\sA]{\can[\X]^{-1/2}}$, and thus an algebroid $\PGL{2,\CC}$-connection on the associated projective line bundle $\P\to\X$, as defined in \textsection\ref{sec:ehresmancon}.  The algebroid jet sequence then provides a section $\varphi:\X\to\P$ such that $\varphi^*\tshf{\P/\X}$ is identified with $\sA$ by the Ehresmann $\sA$-connection.    In \textsection\ref{sec:unif}, we explain how, instead of describing the uniformization of $\X$, such a meromorphic projective structure may be used to describe the Lie groupoid integrating $\sA$.

For an explicit description of the projective $\sA$-connection, we use equations \eqref{eqn:2nd-order-op} and \eqref{eqn:jet-conn} to show that any projective $\sA$-connection $D$ has the local form
\begin{equation}\label{conndef}
D(f dz^{-1/2})  = \rbrac{f'' +  q_{z}f} dz^{3/2},
\end{equation}
where $q_{z} = z^{-2k}p_0$, for some $p_0 \in \sO{\X}$.  We may call $q_{z}$ the connection coefficient of $D$ in the coordinate $z$; if $w = g(z)$ is another coordinate, then
\begin{equation}\label{projchan}
q_{w}\cdot (g')^2= q_{z} + S(g),
\end{equation}
where
$$
S(g) = {\frac{g'''}{g'} - \frac{3}{2}\rbrac{\frac{g''}{g'}}^2}
$$
is the Schwarzian derivative of $g$.
%
%


\section{Construction of Lie groupoids on curves}
\label{sec:constr}

Just as a Lie algebra is the tangent space to a Lie group at the identity element, a Lie algebroid $\sA$ over $\X$ is the normal bundle to the submanifold $\gid(\X)\subset\G$ of identity elements in a Lie groupoid over $\X$.  Any groupoid $\G$ with Lie algebroid $\sA$ is said to \emph{integrate} $\sA$.  Our main objective is to construct Lie groupoids $\G$ over curves that integrate the algebroids $\sA = \tshf{\X}(-\D)$ described above.

\subsection{Review of Lie groupoids}

A \defn{(holomorphic) Lie groupoid} is a tuple $(\G,\X,s,t,m,\gid)$ defining a groupoid whose arrows and objects form complex manifolds\footnote{Sometimes $\G$ is assumed to satisfy all manifold axioms except the Hausdorff condition; we shall see one such example in~\autoref{sec:unif}.} $\G$ and $\X$, respectively.  The maps $s,t : \G \to \X$ taking an arrow to its source and target are required to be holomorphic submersions, and the composition of arrows is a holomorphic map
$$
m : \G {_s\times_t} \G \to \G.
$$
The map $\gid : \X \to \G$ is a closed embedding taking $x \in \X$ to the identity arrow of $x$.  If $g,h \in \G$ satisfy $s(g) = t(h)$, we write $gh = m(g,h)$ for their composition.
A morphism between Lie groupoids $\G$ and $\H$ is then a holomorphic map $F : \G \to \H$ on arrows that covers a holomorphic map on objects and defines a functor.

The Lie algebroid $\LA[\G]$ associated to a Lie groupoid $\G$ is, as a vector bundle, given by the normal bundle to the embedding of $\X$ in $\G$ by the identity:
\[
\LA[\G] = \sN_{\G}\X.
\]
The Lie bracket on the sections of $\LA[\G]$ is obtained, as for Lie algebras, by identifying its sections with right-invariant vector fields on $\G$ tangent to the source fibres. Finally, the anchor map $a:\LA[\G]\to\tshf{\X}$ is induced by the derivative of the target map along $\gid(\X)$.  This construction is functorial: the derivative of any morphism of Lie groupoids $F : \G \to \H$ induces a morphism $\LA[\G] \to \LA[\H]$.

\begin{definition}
Let $\G$ be a Lie groupoid over $\X$. A \defn{$\G$-equivariant sheaf} is a pair $(\sE,\Psi)$, where $\sE$ is an $\sO{\X}$-module,  and $\Psi \in \Hom[\G]{s^*\sE, t^*\sE}$ is an isomorphism that is multiplicative, in the sense that 
$$
p_1^*\Psi \circ p_2^*\Psi = m^* \Psi,
$$
for $p_1, p_2$ the natural projections $\G {_s\times_t} \G \to \G$ and $m$ the groupoid composition.  If $\sE$ is locally free, we say that the pair $(\sE,\Psi)$ is a \defn{representation of $\G$}.

A morphism between two equivariant sheaves $(\sE,\Psi)$ and $(\sE',\Psi')$ is an $\sO{\X}$-linear map $\psi : \sE \to \sE'$ such that
$$
t^*\psi \circ \Psi = \Psi \circ s^*\psi : s^*\sE \to t^*\sE.
$$
We denote by $\Rep{\G}$ the category of representations of $\G$.\qed
\end{definition}


\begin{example}\label{ex:pair}
The \defn{pair groupoid $\Pair{\X}$} over a complex manifold $\X$ has arrows $\Pair{\X} = \X \times \X$, maps $s,t$ given by the natural projections, and identity map given by the diagonal embedding.  The composition law is 
$$
(x,y)\cdot(y,z) = (x,z),
$$
and is illustrated in~\autoref{fig:pairgpd}.  The Lie algebroid of $\Pair{\X}$ is then naturally identified with the tangent sheaf of $\X$.  An equivariant sheaf for the groupoid $\Pair{\X}$ is nothing but a constant sheaf on $\X$.
\begin{figure}
\centering
\tikzstyle{point}=[circle, fill=black, inner sep=0pt, minimum width=3pt]
\begin{tikzpicture}[scale=1.8]
\filldraw[fill=black!5!white] (0,-1)--(1,0)--(0,1)--(-1,0)--(0,-1);
\draw[very thick] (-1,0) -- (1,0);
\draw[dotted] (-0.5,0) -- (0,0.5);
\draw[dotted] (0.5,0) -- (0,0.5);
\draw[dotted] (-0.25,0.25) -- (0,0);
\draw[dotted] (0.25,0.25) -- (0,0);
\node[point] at (0.25,0.25) [label=90:$h$] {};
\node[point] at (-0.25,0.25) [label=90:$g$] {};
\node[point] at (0,.5) [label=90:$gh$] {};
\draw[->] (0.4,0.8) -- (0.8,0.4) node [above right, midway] {$s$};
\draw[->] (-0.4,0.8) -- (-0.8,0.4) node [above left, midway] {$t$};
\draw (0,-0.15) node {$\gid(\X)$};
\draw (0.6,-0.6) node {$\X$};
\draw (-0.6,-0.6) node {$\X$};
\end{tikzpicture}
\caption{The pair groupoid $\Pair{\X}$}\label{fig:pairgpd}
\end{figure}
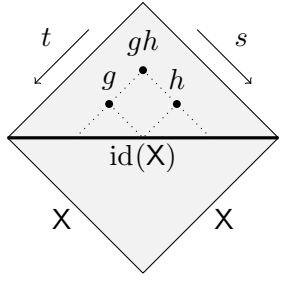
\noindent Note that every Lie groupoid $\G$ over $\X$ maps to $\Pair{\X}$ via its source and target maps; indeed $\Pair{\X}$ is terminal in the category of Lie groupoids over $\X$.\qed
\end{example}

\begin{example}\label{ex:action}
Let $\rho:\H\times\X\to \X$ be the holomorphic action of a complex Lie group $\H$ on the complex manifold $\X$. The \defn{action groupoid} $\H \ltimes \X$ over $\X$ has arrows given by $\H \times \X$, with source and target maps to $\X$ given by the natural projection and $\rho$, respectively.  
The composition is given by 
$$
(h,x)\cdot(k,y) = (hk,y),
$$
which is defined whenever $x = \rho(k,y)$, and has identity elements given by $\X \cong \{1\} \times \X \subset \H \times \X$.  The corresponding Lie algebroid  is the trivial bundle $\LA[\H]\otimes_\CC \sO{\X}$, which inherits a Lie bracket on sections from the Lie algebra $\LA[\H]$, as well as an anchor map from the derivative of $\rho$. 
Equivariant sheaves for the groupoid $\H\ltimes\X$ are nothing but $\H$-equivariant sheaves in the usual sense.
\qed
\end{example}
%
%
%
%

\begin{example}
The \defn{gauge groupoid} $\GL{\sE}$ of a locally free sheaf $\sE$ on $\X$ is the Lie groupoid over $\X$ for which the morphisms between $p,q\in\X$ are the $\CC$-linear isomorphisms $\sE|_{p} \to \sE|_{q}$ between the fibres of $\sE$.  The isotropy group at $p$ is therefore identified with $\GL{\sE|_p}$.
The Lie algebroid of $\GL{\sE}$ is the sheaf $\ati{\sE}$ of first order differential operators on $\sE$ with scalar symbols; it is called the Atiyah algebroid of $\sE$ and fits in the exact sequence
$$
\xymatrix{
0 \ar[r] & \sEnd{\sE} \ar[r] & \ati{\sE} \ar[r] & \tshf{\X} \ar[r] & 0.
}
$$
This example is important to us because a representation of a Lie groupoid $\G$ on a locally free sheaf $\sE$ is equivalent to a morphism $\Psi : \G \to \GL{\sE}$ of Lie groupoids.  The infinitesimal version of this morphism is a Lie algebroid morphism $\Psi_*:\LA[\G]\to \ati{\sE}$, i.e., a representation of $\LA[\G]$ on $\sE$ in the sense of~\autoref{def:arep}.  Therefore, the Riemann--Hilbert correspondence between representations of $\LA[\G]$ and representations of $\G$ may be viewed as an integration problem for Lie algebroid morphisms to Lie groupoid morphisms.  
 \qed
\end{example}

\subsection{Integration and source-simply connected groupoids}
\label{sec:ssc-integ}

An \defn{integration} of the Lie algebroid $\sA$ is a pair $(\G,\phi)$, where $\G$ is a Lie groupoid and $\phi:\LA[\G] \to \sA$ is an isomorphism of Lie algebroids.  Unlike finite-dimensional Lie algebras, which are always integrable to Lie groups by Lie's third theorem, a Lie algebroid may fail to integrate to a Lie groupoid.  The obstruction theory controlling this problem was described by Crainic and Fernandes in~\cite{Crainic2003a}. 
One situation in which a certain type of integration was proven to exist is when the anchor map is an injection on sections:
\begin{theorem}[Debord~\cite{Debord2001}]\label{thm:debord}
If $\sA$ is a Lie algebroid on $\X$ for which the anchor map $a : \sA \to \tshf{\X}$ is an embedding of sheaves, then $\sA$ is integrable to a Lie groupoid $\G $ having the property that the only map $\X \to \G$ that is a section of both $s$ and $t$ is the identity map $\gid : \X \to \G$.
\end{theorem}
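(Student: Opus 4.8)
The plan is to build the integrating groupoid directly out of the foliation determined by $\sA$, and to concentrate all of the work at the place where the hypothesis is actually needed: proving smoothness across the locus where the anchor degenerates. First I would reformulate the hypothesis. Since $a:\sA\to\tshf{\X}$ is injective on sections and $\sA$ is locally free, $a$ is an isomorphism onto its image $\mathcal{F}:=a(\sA)\subseteq\tshf{\X}$, a locally free, involutive subsheaf of vector fields; thus $\sA$ is nothing but such a (singular) foliation whose generators form a projective module. Because $a$ cannot vanish on any open set, its degeneracy locus $\Sigma$ is a proper analytic subset, and on the dense open complement $\X\setminus\Sigma$ one has $\sA|_{\X\setminus\Sigma}\cong\tshf{\X\setminus\Sigma}$.

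Next I would produce the arrows as germs of bisections obtained by exponentiating local sections of $\sA$: a local section $\xi$ flows along $a(\xi)$, and the pseudogroup generated by composing and inverting such flows has an associated holonomy groupoid $\G$, which is the proposed integration. (Equivalently, one may form the Crainic--Fernandes groupoid $P(\sA)/{\sim}$ of $\sA$-homotopy classes of $\sA$-paths and then pass to the minimal quotient with generically trivial isotropy.) In either description the source and target maps $s,t:\G\to\X$, the multiplication, and the unit embedding $\gid$ are defined in the evident way, and over $\X\setminus\Sigma$ the groupoid restricts to an open subgroupoid of the holonomy groupoid of the top-dimensional leaves, whose isotropy is trivial.

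The main obstacle is to endow $\G$ with the structure of a manifold for which $s$ and $t$ are submersions, i.e.\ to prove smoothness across $\Sigma$. This is exactly where the embedding hypothesis does the work. Exponentiating a local frame of $\sA$ produces flow charts, and the point is that these are \emph{injective} immersions precisely because $a$ does not collapse sections; one then checks the cocycle compatibility of these charts and that they cover $\G$, so that they glue to a smooth atlas with $s,t$ submersive. In the Crainic--Fernandes language the same phenomenon reads as the statement that the monodromy groups $\mathcal{N}_x\subseteq Z(\iso[x]{\sA})$ are locally uniformly discrete: an $\sA$-homotopy whose base projection bounds lifts back through the injective anchor, so no non-discrete transverse monodromy can arise. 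I expect this compatibility-and-discreteness verification to be the genuine technical heart, and I would note that it is exactly this step that fails for a nonzero bundle of abelian Lie algebras, consistent with the hypothesis excluding that case.

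Finally I would deduce the stated property. A map $\sigma:\X\to\G$ with $s\circ\sigma=t\circ\sigma=\id$ takes values, at each point $x$, in the isotropy group $\G_x^x$. Over the dense open set $\X\setminus\Sigma$ this isotropy is trivial by construction, so $\sigma$ agrees with $\gid$ there; since $\gid(\X)$ is closed, $\sigma$ is continuous, and $\X\setminus\Sigma$ is dense, it follows that $\sigma=\gid$ on all of $\X$. This both singles out the correct integration (the minimal, generically trivial-isotropy one, rather than the source-simply-connected groupoid $\FG{\X}$, whose isotropy over a non-simply-connected leaf would admit nontrivial unit-covering bisections) and establishes the asserted uniqueness.
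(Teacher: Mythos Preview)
The paper does not prove this theorem: it is quoted as an external result of Debord~\cite{Debord2001} and is used only to deduce \autoref{cor:tdint}, so there is no proof in the paper to compare your proposal against.

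That said, your sketch is a faithful outline of Debord's actual strategy: identify an almost-injective Lie algebroid with a locally free involutive subsheaf of $\tshf{\X}$, build the holonomy groupoid from germs of local bisections generated by flows of sections, and isolate the technical heart as the construction of a smooth atlas across the degeneracy locus of the anchor (equivalently, in the Crainic--Fernandes picture, uniform discreteness of the monodromy groups). Your deduction of the ``no nontrivial identity bisection'' property from generic triviality of isotropy plus density of $\X\setminus\Sigma$ is correct and is the standard argument. The only caution is that your sketch, as written, does not actually carry out the smoothness step---it names it and explains why injectivity of $a$ is the relevant hypothesis, but the verification that the flow charts are injective and compatible is nontrivial and occupies the bulk of Debord's paper; a full proof would need to supply this.
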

\begin{corollary}\label{cor:tdint}
The Lie algebroid $\tshf{\X}(-\D)$ associated to a curve $\X$ with effective divisor $\D$ is integrable. 
\end{corollary}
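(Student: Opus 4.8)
The plan is to recognize the statement as an immediate specialization of Debord's theorem (\autoref{thm:debord}): I need only check that the anchor of $\tshf{\X}(-\D)$ is an embedding of sheaves, whereupon that theorem produces an integrating groupoid. Recall from \autoref{sec:liealgcur} that for an effective divisor $\D$ the algebroid $\tshf{\X}(-\D)$ has anchor equal to the tautological inclusion $\tshf{\X}(-\D)\hookrightarrow\tshf{\X}$ of vector fields vanishing along $\D$, with bracket inherited from that of vector fields. Thus the entire argument reduces to a single injectivity check on this map.

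First I would observe that injectivity at the level of sheaves is essentially tautological: $\tshf{\X}(-\D)$ is by construction a subsheaf of $\tshf{\X}$, and the anchor is the inclusion of this subsheaf, hence a monomorphism of $\sO{\X}$-modules. If one prefers an argument independent of this description, one can note that the support of $\D$ is discrete, so on the dense open complement the anchor restricts to the isomorphism $\tshf{\X}(-\D)|_{\X\setminus\D}\xrightarrow{\sim}\tshf{\X}|_{\X\setminus\D}$; since $\tshf{\X}(-\D)$ is locally free, hence torsion-free, and any kernel of the anchor would be supported on $\D$, that kernel is a torsion subsheaf of a torsion-free sheaf and therefore vanishes. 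Either way, the anchor is an embedding of sheaves in the sense required by \autoref{thm:debord}.

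With the hypothesis verified, \autoref{thm:debord} applies directly and yields a Lie groupoid $\G$ together with an isomorphism $\LA[\G]\cong\tshf{\X}(-\D)$, which is exactly an integration of the algebroid, completing the proof. I do not anticipate any genuine obstacle, since the whole content is the identification of the anchor with the subsheaf inclusion already recorded in \autoref{sec:liealgcur}. The only point worth emphasizing is that \emph{embedding of sheaves} here means injectivity of the sheaf morphism, not a subbundle inclusion: by \autoref{prop:iso-alg} the anchor drops to rank zero at every point of $\D$, so it is far from a vector-bundle embedding, yet this rank degeneration leaves injectivity on sections --- and hence Debord's hypothesis --- untouched.
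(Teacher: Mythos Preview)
Your proposal is correct and matches the paper's approach exactly: the corollary is stated immediately after Debord's theorem with no further proof, precisely because the anchor of $\tshf{\X}(-\D)$ is by definition the subsheaf inclusion $\tshf{\X}(-\D)\hookrightarrow\tshf{\X}$, so the hypothesis of \autoref{thm:debord} is satisfied tautologically. Your additional remark distinguishing sheaf injectivity from vector-bundle embedding is a helpful clarification but not needed for the argument.
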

In~\autoref{biratgpd}, we provide an explicit birational construction of the groupoid $\Pair{\X,\D}$ integrating $\tshf{\X}(-\D)$ that features in~\autoref{thm:debord}.
In fact, this groupoid is the ``smallest'' integration of $\tshf{\X}(-\D)$, in the sense that it is terminal in the category of integrations.  We are also interested in the other extreme, as follows.
\begin{definition}
A Lie groupoid $\G$ over $\X$ is \defn{source-simply connected} if the fibres of the source map $s : \G \to \X$ are connected and simply connected.  
\end{definition}

A fundamental result in Lie groupoid theory~\cite[Proposition 6.6]{Moerdijk2003} states that if a Lie algebroid has an integration, then it must also have a source-simply connected integration $\G$.  This result comes with an important caveat, however: $\G$ may not be Hausdorff.  
Therefore, as a result of~\autoref{cor:tdint}, we conclude that $\tshf{\X}(-\D)$ has a source-simply connected integration; in~\autoref{sec:unif}, we will demonstrate that in all but one single case, this groupoid is, in fact, Hausdorff.

A key property of source-simply connected groupoids $\G$, shown in~\cite[Proposition 6.8]{Moerdijk2003}, is that they are initial objects: If $\phi : \LA[\G] \to \LA[\H]$ is a morphism to the Lie algebroid of another groupoid $\H$, then there is a unique morphism $\Phi : \G \to \H$ inducing $\phi$.
For this reason, the source-simply connected integration of a Lie algebroid $\sA$, if it exists, is unique up to isomorphism.  We may therefore speak of \emph{the} source-simply connected integration, which we denote by $\FG{\sA}$.
Another consequence of this property is the fact that any representation $\nabla$ of $\sA$ on a vector bundle $\sE$ integrates to a groupoid representation: $\nabla$ may be viewed as a morphism $\sA \to \ati{\sE}$, and so it integrates to a unique morphism $\FG{\sA} \to \GL{\sE}$.  This analogue of Lie's second theorem serves as the crucial integration step in the Riemann--Hilbert correspondence.  We provide a more detailed proof of it below.
\begin{theorem}\label{integrep}
If $\sA$ is an integrable Lie algebroid, then there is a natural equivalence of categories
$$
\Rep{\sA} \cong \Rep{\FG{\sA}}
$$
between representations of $\sA$ and its source-simply connected integration $\FG{\sA}$.
\end{theorem}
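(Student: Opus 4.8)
The plan is to identify both representation categories with categories of structure-preserving morphisms into the gauge groupoid $\GL{\sE}$ and its Atiyah algebroid $\ati{\sE}$, and then to pass between the two pictures using the universal property of the source-simply connected integration. First I would set up the differentiation functor $\Rep{\FG{\sA}}\to\Rep{\sA}$. A representation of $\FG{\sA}$ on a locally free sheaf $\sE$ is the same datum as a morphism of Lie groupoids $\Psi:\FG{\sA}\to\GL{\sE}$, and differentiating $\Psi$ along the identities $\gid(\X)$ yields a morphism of Lie algebroids $\Psi_*:\LA[\FG{\sA}]\to\ati{\sE}$. Since $\FG{\sA}$ integrates $\sA$, the canonical identification $\LA[\FG{\sA}]\cong\sA$ turns $\Psi_*$ into an $\sA$-connection $\nabla$ on $\sE$, automatically flat because $\Psi_*$ is an algebroid morphism. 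On morphisms, an $\sO{\X}$-linear map $\psi:\sE\to\sE'$ satisfying the equivariance $t^*\psi\circ\Psi=\Psi'\circ s^*\psi$ differentiates to $\psi\circ\nabla=\nabla'\circ\psi$, so $\psi$ is a morphism of $\sA$-modules, and functoriality is immediate.

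Next I would build the integration functor $\Rep{\sA}\to\Rep{\FG{\sA}}$. Given a representation $\nabla$ of $\sA$ on $\sE$, regarded as an algebroid morphism $\sA=\LA[\FG{\sA}]\to\ati{\sE}=\LA[\GL{\sE}]$, the universal property of the source-simply connected integration~\cite[Proposition 6.8]{Moerdijk2003} furnishes a \emph{unique} morphism of Lie groupoids $\Psi:\FG{\sA}\to\GL{\sE}$ inducing $\nabla$, which is the sought representation of $\FG{\sA}$. On objects the two constructions are mutually inverse: differentiating the integrated $\Psi$ returns $\nabla$ by construction, while integrating $\nabla$ and differentiating returns the original $\Psi$, because both the re-integrated morphism and $\Psi$ induce the same infinitesimal morphism and the universal property guarantees uniqueness of the integration.

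The hard part will be the bijection on morphisms, namely showing that an $\sO{\X}$-linear $\psi$ intertwines the two $\sA$-connections if and only if it intertwines the two groupoid actions. The forward direction was already obtained by differentiation; for the converse I would argue by a parallel-transport uniqueness argument on the source fibres. Both $t^*\psi\circ\Psi$ and $\Psi'\circ s^*\psi$ are global sections of $\sHom[\sO{\X}]{s^*\sE,t^*\sE'}$ over $\FG{\sA}$; their restrictions to $\gid(\X)$ both equal $\psi$, and the relation $\psi\circ\nabla=\nabla'\circ\psi$ shows that their derivatives transverse to $\gid(\X)$ agree as well. Since each source fibre of $\FG{\sA}$ is connected, a section of this $\sHom$-sheaf is determined by its value along $\gid(\X)$ together with its horizontal derivative---this is exactly the uniqueness clause in the ODE governing parallel transport along the fibres---so the two sections coincide everywhere and $\psi$ is equivariant. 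Equivalently, one may endow the internal hom $\sHom[\sO{\X}]{\sE,\sE'}$ with its induced representation, so that morphisms on each side are its flat, respectively invariant, global sections in $\cohlgy[0]{\X,\sHom[\sO{\X}]{\sE,\sE'}}$, and source-connectedness identifies flatness with invariance.

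Putting these together, the differentiation and integration functors are mutually inverse on both objects and morphisms, giving the asserted equivalence $\Rep{\sA}\cong\Rep{\FG{\sA}}$; naturality is built in, since every construction is expressed through the universal property and the fixed identifications $\LA[\FG{\sA}]\cong\sA$ and $\LA[\GL{\sE}]\cong\ati{\sE}$.
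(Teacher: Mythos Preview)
Your proposal is correct and is essentially the high-level argument the paper itself sketches in the paragraph \emph{preceding} the theorem: view a representation as a morphism $\sA\to\ati{\sE}$ and invoke the Lie~II property of $\FG{\sA}$ to integrate it to $\FG{\sA}\to\GL{\sE}$. The paper then says it will ``provide a more detailed proof of it below,'' and that is where the two approaches diverge.

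The paper's proof unpacks the black box. Rather than citing the universal property, it constructs $\Psi$ directly: pull back the $\sA$-connection to a flat partial connection on $t^*\sE$ along the source foliation $\sF$, observe that $\sHom{s^*\sE,t^*\sE}$ inherits a flat $\sF$-connection, and define $\Psi$ as the unique $\sF$-flat extension of the identity section $1\in\End{\sE}$ along $\gid(\X)$, using simple connectedness of the source fibres. The treatment of morphisms is the same in spirit as yours (both $t^*\phi\circ\Psi_1$ and $\Psi_2\circ s^*\phi$ are $\sF$-flat and agree on $\gid(\X)$), and the paper likewise omits the differentiation functor. What the explicit construction buys is visible in the Remark immediately after the proof: it makes transparent that nothing beyond the classical existence and uniqueness theorem for ODEs is used, and it produces $\Psi$ in a form that is directly usable in the later applications to resummation (\S\ref{sec:solns}), where one needs to manipulate $\Psi$ as a concrete flat section rather than as an abstractly integrated morphism.

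One small wording issue: in your morphism argument, the phrase ``determined by its value along $\gid(\X)$ together with its horizontal derivative'' is not quite right. The point is simply that both $t^*\psi\circ\Psi$ and $\Psi'\circ s^*\psi$ are \emph{themselves} $\sF$-flat (the first because $\psi$ intertwines the connections, the second because $s^*\psi$ is constant along source fibres), and an $\sF$-flat section on a connected source fibre is determined by its value at the single point where the fibre meets $\gid(\X)$. Your ``equivalently'' clause states this correctly.
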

\begin{proof}
Let $\G = \FG{\sA}$ and let $\sF\subset\tshf{\G}$ be the tangent sheaf of the foliation defined by the source fibres.  Since $\sA$ is identified with the right-invariant sections of $\sF$, it follows that a flat $\sA$-connection on $\sV$ is equivalent to a flat partial $\sF$-connection on $t^*\sV$.  Furthermore, any $\sA$-flat section $s\in\sV$ lifts to a section $t^*s$ that is flat along $\sF$.

After these preliminaries, let $\sE$ be an $\sA$-representation. Then by the above, $t^*\sE$ has a flat $\sF$-connection.  On the other hand, $s^*\sE$ is constant along $\sF$, so it also has a $\sF$-connection.  Therefore, $\Hom{s^*\sE,t^*\sE}$ has a flat $\sF$-connection. Along the identity submanifold $\X\subset\G$, this bundle coincides with $\End{\sE}$, which has a canonical section $1\in\End{\sE}$.  Since $\X$ intersects each source fibre in a single point, and since the source fibres are simply connected, this canonical section has a unique extension to an $\sF$-flat section 
$$
\Psi \in \cohlgy[0]{\G,\sHom{s^*\sE,t^*\sE}},
$$
which is the required groupoid representation; this procedure defines the integration functor $\Rep{\sA}\to\Rep{\FG{\sA}}$ on objects.

A morphism of $\sA$-representations $\phi:(\sE_1,\nabla_1)\to (\sE_2,\nabla_2)$ is a section of $\sE_1^\vee\otimes\sE_2$ that is flat for the $\sA$-connection $\nabla_1^*\otimes 1+1\otimes\nabla_2$. 
It follows from the preliminaries above that
$t^*\phi$ is $\sF$-flat.  Similarly, $s^*\phi$ is constant along source fibres, so it is trivially $\sF$-flat. Hence, we have a diagram of morphisms that are flat along $\sF$: 
\[
\xymatrix{
s^*\sE_1 \ar[r]^{\Psi_1} \ar[d]_{s^*\phi} & t^*\sE_1 \ar[d]^{t^*\phi} \\
s^*\sE_2 \ar[r]^{\Psi_1} & t^*\sE_2
}
\]
This diagram commutes along the identity submanifold, and since all the maps are flat along $\sF$, we obtain the commutativity on all of $\G$, proving that $\phi$ defines a morphism of groupoid representations.  We omit the details concerning the inverse functor, which is given by differentiating a morphism $\FG{\sA}\to\GL{\sE}$ along the identity submanifold.
\end{proof}
\begin{remark}
We emphasize that the construction of the groupoid representation requires no analysis beyond the classical theorem on existence and uniqueness to solutions of first-order ordinary differential equations.\qed
\end{remark}

\begin{example}
The tangent sheaf $\tshf{\X}$ has integration $\Pair{\X}$, defined in~\autoref{ex:pair}.  The source fibre over a point $x \in \X$ is given by $\X\times \{x\}$ and is generally not simply connected.  The canonical source-simply connected integration is given by the fundamental groupoid $\FG{\X}$ of $\X$.

A representation of $\tshf{\X}$ is simply a vector bundle $\sE$ equipped with a flat connection $\nabla$. The integration of representations referred to above is then simply given by the parallel transport of $\nabla$ along a curve $\gamma : [0,1] \to \X$, which depends only on the homotopy class of $\gamma$ and therefore gives a representation of the fundamental groupoid
$$
\Par{\nabla} : \FG{\X} \to \GL{\sE}.
$$
The parallel transport descends to a representation of $\Pair{\X}$ if and only if the monodromy of the connection is trivial.
\qed
\end{example}

\begin{definition}
Let $(\X,\D)$ be a curve equipped with an effective divisor.  The \defn{fundamental groupoid of $(\X,\D)$} is the source-simply connected integration
$$
\FG{\X,\D} = \FG{\tshf{\X}(-\D)}
$$
of the corresponding Lie algebroid.
\end{definition}
This groupoid will be explicitly described in~\autoref{sec:unif}; we wish to describe here only a few of its basic properties.  In particular, we describe its restriction to $\D$ as well as to the complement of $\D$. Recall that the restriction of a groupoid $\G$ over $\X$ to the subset $\S\subset \X$ is given by the subset $s^{-1}(\S)\cap t^{-1}(\S)$ of arrows with source and target in $\S$.

\begin{lemma}\label{isovsp}
Let $(\X,\D)$ be as above, and let $\U = \X \setminus \D$.  Then
$$
\FG{\X,\D}|_\U \cong \FG{\U}.
$$
If $p \in \D$ is a point of multiplicity $k > 0$, then the isotropy group $\FG{\X,\D}|_p$ at $p$ is the entire source fibre, and is canonically isomorphic to the additive group underlying the vector space $(\ctb[p]{\X})^{k-1}$.
\end{lemma}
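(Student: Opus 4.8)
The plan is to derive both statements from the orbit structure of $\sA = \tshf{\X}(-\D)$ recorded above, together with the universal (initial-object) property of the source-simply connected integration. Recall that the orbits of $\sA$ are the connected components of $\U = \X\setminus\D$ and the individual points of $\D$, each of the latter being a fixed point of the anchor. Since the source and target of any arrow lie in a common orbit, I would first observe that no arrow of $\FG{\X,\D}$ joins a point of $\U$ to a point of $\D$; in particular $s^{-1}(\U) = t^{-1}(\U)$.

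For the restriction to $\U$, write $\G = \FG{\X,\D}$. By the previous observation, $\G|_\U = s^{-1}(\U)\cap t^{-1}(\U)$ is just the open subgroupoid $s^{-1}(\U)$, whose source fibre over a point $u\in\U$ coincides with the full source fibre $s^{-1}(u)$ of $\G$ and is therefore connected and simply connected. Its Lie algebroid is $\sA|_\U = \tshf{\U}$, as $\D$ is disjoint from $\U$. Thus $\G|_\U$ is a source-simply connected integration of $\tshf{\U}$, and I would conclude $\G|_\U\cong\FG{\U}$ by the uniqueness of such integrations \cite[Proposition 6.8]{Moerdijk2003}.

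For the isotropy at a point $p\in\D$ of multiplicity $k$, I would argue as follows. Because $\{p\}$ is an orbit, every arrow with source $p$ also has target $p$, so $s^{-1}(p)\subseteq t^{-1}(p)$ and the source fibre coincides with the isotropy group $\FG{\X,\D}|_p$. Source-simple-connectedness then makes this a connected, simply connected complex Lie group whose Lie algebra is the isotropy Lie algebra $\iso[p]{\sA}$, canonically isomorphic to the abelian algebra $(\ctb[p]{\X})^{k-1}$ by \autoref{prop:iso-alg}. Since the unique connected, simply connected Lie group integrating an abelian Lie algebra $V$ is its additive group $(V,+)$, with the exponential map as the isomorphism, composing with the identification of \autoref{prop:iso-alg} yields the canonical isomorphism $\FG{\X,\D}|_p\cong((\ctb[p]{\X})^{k-1},+)$.

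The main obstacle is the passage from the topological hypothesis of source-simple-connectedness to this algebraic conclusion. Concretely, one must confirm that the Lie algebra of the isotropy group $\FG{\X,\D}|_p$ is genuinely the isotropy Lie algebra $\iso[p]{\sA}$---a standard fact in Lie groupoid theory---and that a connected, simply connected abelian complex Lie group is necessarily a vector group. Neither point is deep, so the lemma should follow essentially formally from the orbit structure and the universal property of $\FG{\X,\D}$; I would expect the only genuine bookkeeping to be checking that restriction commutes appropriately with the algebroid and confirming that the possible non-Hausdorffness of $\G$ plays no role in these purely local statements.
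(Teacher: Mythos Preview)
Your proposal is correct and follows essentially the same approach as the paper: both arguments use the orbit structure of $\sA$ to identify the source fibres of $\FG{\X,\D}|_\U$ with those of $\FG{\X,\D}$ (hence simply connected), invoke uniqueness of the source-simply connected integration to obtain $\FG{\U}$, and then at $p\in\D$ observe that the source fibre equals the isotropy group, which must be the simply connected group integrating the abelian isotropy Lie algebra $(\ctb[p]{\X})^{k-1}$ from \autoref{prop:iso-alg}. Your version is somewhat more explicit about the bookkeeping (e.g., $s^{-1}(\U)=t^{-1}(\U)$) and the standard Lie-theoretic facts being invoked, but the content is the same.
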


\begin{proof}
Since $\U$ is a disjoint union of orbits of the Lie algebroid, it is also a disjoint union of groupoid orbits.  The source fibre of $\FG{\X,\D}|_\U$ at $p\in \U$ is therefore the same as the source fibre of $\FG{\X,\D}$ at $p$, which is simply connected.  Therefore $\FG{\X,\D}|_\U$ is canonically isomorphic to the source-simply connected integration of $\tshf{\X}(-\D) |_\U \cong \tshf{\U}$, which is $\FG{\U}$.

If $p \in \D$ is a point of multiplicity $k$, then $p$ is an orbit of the source-simply connected groupoid $\FG{\X,\D}$.  Therefore, the isotropy group $\FG{\X,\D}|_p$ must coincide with simply-connected Lie group integrating the isotropy Lie algebra of the corresponding Lie algebroid, $\tshf{\X}(-\D)$.  The latter is the one-dimensional abelian Lie algebra $(\ctb[p]{\X})^{k-1}$, which is its own simply-connected integration, giving the result.
\end{proof}

\begin{example}[\airylabel]\label{ex:airy2}
In \autoref{ex:airy-2nd-order}, we viewed the Airy equation as a second-order connection for $\sA = \tshf{\PP[1]}(-3\cdot\infty)$.  Restricting to $\AF[1] = \PP[1] \setminus\{\infty\}$, this operator has kernel spanned by the classical Airy functions $\Ai, \Bi$.  
The corresponding flat connection on $\sE = \sJet[1]{\sO{\AF[1]}}\cong\sO{\AF[1]}\oplus\sO{\AF[1]}$, described in \autoref{ex:airy}, has kernel spanned by
$((\Ai, \Ai'),(\Bi,\Bi'))$, a flat basis of sections for $\sE$ over $\AF[1]$ satisfying the Wronskian condition $\Ai\Bi'-\Bi\Ai' = 1/\pi$.

The source-simply connected integration of $\sA|_{\AF[1]}=\tshf{\AF[1]}$ is $\FG{\AF[1]} \cong \Pair{\AF[1]}$.  Using coordinates $(x,y)$ on $\Pair{\AF[1]}$, the corresponding representation
$$
\Psi_\Airy : s^*\sE  \to t^*\sE
$$
defined by parallel transport can be written explicitly as
\begin{equation}\label{airyunivsol}
\begin{aligned}
\Psi_\Airy(x,y) &= 
\begin{psmallmatrix}
\Ai\phantom{'}(x) & \Bi\phantom{'}(x) \\
\Ai'(x) & \Bi'(x)
\end{psmallmatrix}
\begin{psmallmatrix}
\Ai\phantom{'}(y) & \Bi\phantom{'}(y) \\
\Ai'(y) & \Bi'(y)
\end{psmallmatrix}^{-1} \\
&= \pi \cdot \left(
\begin{smallmatrix}[10pt]
\Ai\phantom{'}(x)\Bi'(y) - \Bi\phantom{'}(x)\Ai'(y) & - \Ai\phantom{'}(x)\Bi(y) + \Bi\phantom{'}(x) \Ai(y) \\
 \Ai'(x)\Bi'(y) - \Bi'(x)\Ai'(y)& -\Ai'(x)\Bi(y)+\Bi'(x)\Ai(y)
\end{smallmatrix}\right).
\end{aligned}
\end{equation}
While the Airy functions, and hence the fundamental solutions, are singular at infinity, the above representation $\Psi_\Airy(x,y)$ extends holomorphically to the groupoid $\FG{\PP[1],3\cdot\infty}$.  We will investigate this extension in~\autoref{airylast}. \qed
\end{example}

\subsection{Birational construction of Lie groupoids}\label{biratgpd}

We now construct a groupoid that integrates the Lie algebroid $\tshf{\X}(-\D)$ on a curve $\X$ with effective divisor $\D$.  We use a technique developed in \cite{Gualtieri2012} for blowing up groupoids along subgroupoids, as follows.       

\begin{theorem}[{\cite[Theorem 2.8]{Gualtieri2012}}]\label{blwpgpd}
	Let $\G$ be a Lie groupoid over the complex manifold $\X$, and let $\H\subset\G$ be a closed Lie subgroupoid over the smooth closed hypersurface $\Y\subset\X$.  Denote by $\Bl{\H}{\G}$ the blowup of $\G$ along $\H$, and let $\S,\T\subset\Bl{\H}{\G}$ be the proper transforms of $s^{-1}(\Y), t^{-1}(\Y)\subset\G$.  Then the manifold
	\begin{equation*}
		\EM{\G}{\H} = \Bl{\H}{\G}\setminus(\S\cup\T)
	\end{equation*}
	is naturally a Lie groupoid over $\X$ such that the blowdown map is a morphism of groupoids $\EM{\G}{\H}\to \G$.

	Furthermore, the Lie algebroid of $\EM{\G}{\H}$ is canonically identified with the subsheaf of $\LA[\G]$ consisting of sections whose restriction to $\Y$ lies in $\LA[\H]$, otherwise known as the elementary modification of $\LA[\G]$ along $\LA[\H]$.   
\end{theorem}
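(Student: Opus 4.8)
The construction is local over $\X$ and trivial away from $\Y$, where $\pi\colon\Bl{\H}{\G}\to\G$ is an isomorphism and nothing is modified; so I would fix a point of $\Y$ and work in local coordinates on $\G$ adapted to the subgroupoid, writing $\Y=\{y=0\}$ and presenting $\H$ as a linear subspace compatible with $s$, $t$, and the identity section. The whole statement then reduces to showing that the structure maps $s,t,\gid,m$ of $\G$ lift through the modification; once they do, the groupoid structure on $\EM{\G}{\H}$ is by construction the one for which the blowdown is a morphism, and the functoriality of the $\LA$ construction is automatic.

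The basic observation is that $\pi$ restricts to an isomorphism $\EM{\G}{\H}\setminus E\cong\G|_{\X\setminus\Y}$, where $E$ is the exceptional divisor: away from $E$, $\S$ maps isomorphically onto $s^{-1}(\Y)\setminus\H$ and $\T$ onto $t^{-1}(\Y)\setminus\H$, so deleting $\S\cup\T$ removes precisely the arrows of $\G$ that meet $\Y$. These arrows are rebuilt along $E$, onto which both $\tilde s=s\circ\pi$ and $\tilde t=t\circ\pi$ map, since $s(\H),t(\H)\subseteq\Y$. A coordinate computation shows that the rank of $\tilde s$ can drop only along $\S$, and that of $\tilde t$ only along $\T$; after deleting $\S\cup\T$ both restrict to submersions $\EM{\G}{\H}\to\X$, with $\tilde s^{-1}(\Y)=\tilde t^{-1}(\Y)=E\cap\EM{\G}{\H}$ the new arrows lying over $\Y$. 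The identity lifts to the proper transform of $\gid(\X)$: since $\gid$ is a section of both $s$ and $t$, it is transverse to $s^{-1}(\Y)$ and $t^{-1}(\Y)$, so its proper transform meets $E$ away from $\S$ and $\T$, giving a closed embedding $\X\hookrightarrow\EM{\G}{\H}$. The inverse lifts because inversion preserves $\H$ and exchanges $s^{-1}(\Y)$ with $t^{-1}(\Y)$, hence $\S$ with $\T$.

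The crux is the multiplication. The decisive point is that a composable pair $(g,h)$ in $\EM{\G}{\H}\,{_{\tilde s}\times_{\tilde t}}\,\EM{\G}{\H}$ satisfies $g\in E$ if and only if $h\in E$: if $g\in E$ then $\tilde s(g)\in\Y$, forcing $\tilde t(h)\in\Y$ and hence $h\in E$, and symmetrically. Consequently the two pullbacks of the exceptional ideal to the fibre product have the same support, so that $m^*\sI_\H$ pulls back to a locally principal ideal there; by the universal property of the blowup, the map $m\circ(\pi\times\pi)$ then factors through a holomorphic lift
$$
\tilde m\colon \EM{\G}{\H}\,{_{\tilde s}\times_{\tilde t}}\,\EM{\G}{\H}\longrightarrow \Bl{\H}{\G},
$$
and one checks in coordinates that $\tilde m$ avoids $\S\cup\T$ and is compatible with $\tilde s,\tilde t$. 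I expect this to be the main obstacle: verifying that the exceptional ideal pulls back to a principal ideal is exactly where the hypothesis that $\H$ is a \emph{subgroupoid}, rather than an arbitrary submanifold, enters, through $m(\H\,{_s\times_t}\,\H)\subseteq\H$ and the resulting control on the orders of vanishing along $\S$ and $\T$.

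It remains to verify the groupoid axioms and identify the Lie algebroid. The associativity, unit, and inverse identities hold on the dense open set $\EM{\G}{\H}\setminus E\cong\G|_{\X\setminus\Y}$, where the structure agrees with that of $\G$, and they extend to all of $\EM{\G}{\H}$ by continuity, being closed conditions on a complex manifold. For the Lie algebroid, functoriality applied to the blowdown morphism gives a map $\LA[\EM{\G}{\H}]\to\LA[\G]$ that is an isomorphism over $\X\setminus\Y$, hence injective (its kernel, supported on $\Y$, is a torsion subsheaf of a vector bundle and so vanishes). Computing the normal bundle $\sN_{\EM{\G}{\H}}(\X)$ along the proper transform of $\gid$ in local coordinates, the image over $\Y$ is exactly $\LA[\H]$, so $\LA[\EM{\G}{\H}]$ is identified with the subsheaf of $\LA[\G]$ of sections restricting to $\LA[\H]$ along $\Y$, i.e.\ the elementary modification of $\LA[\G]$ along $\LA[\H]$, as claimed.
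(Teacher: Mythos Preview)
This theorem is quoted from \cite{Gualtieri2012} and is stated in the present paper without proof, so there is no proof here to compare your proposal against.

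That said, your outline is the natural strategy and is essentially how the result is established in the cited source: lift $s,t,\gid,\iota$ through the blowup, use the universal property of the blowup to lift $m$, and extend the groupoid identities from the dense open where nothing has changed. The one place that needs tightening is the multiplication step. The implication ``the two pullbacks of the exceptional ideal to the fibre product have the same support, so $m^*\sI_\H$ pulls back to a locally principal ideal'' is not valid as written: equal support of two ideal sheaves says nothing about principality of a third. What you actually need is that the preimage of $\H$ under $m\circ(\pi\times\pi)$ is a Cartier divisor in the fibre product. This follows once you note that (i) the fibre product $\EM{\G}{\H}\,{_{\tilde s}\times_{\tilde t}}\,\EM{\G}{\H}$ is smooth (because $\tilde s$ is a submersion), with $E\,{_{\tilde s}\times_{\tilde t}}\,E$ a smooth hypersurface inside it, and (ii) the subgroupoid condition $m(\H\,{_s\times_t}\,\H)\subset\H$ forces $(m\circ(\pi\times\pi))^{-1}(\H)$ to be supported on this hypersurface, after which a local coordinate computation upgrades the set-theoretic containment to the scheme-theoretic statement you need. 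You correctly flag this as the crux and the place where the subgroupoid hypothesis enters; the remainder of your argument, including the Lie algebroid identification via the injective map $\LA[\EM{\G}{\H}]\to\LA[\G]$ and a normal-bundle computation along the identity, is sound.
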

The blowup groupoid satisfies the following universal property, generalizing \cite[Theorem 2.16]{Gualtieri2012} and having the same proof, which we omit.

\begin{proposition}\label{prop:em-univ}
Let $\F$ be a Lie groupoid over $\Z$.  If $\phi : \F\to\G$ is a morphism of groupoids such that $\phi^{-1}(\H) \subset \F$ and $\phi^{-1}(\Y) \subset \Z$ are smooth hypersurfaces, and $\LA[\phi]: \LA[\F] \to \LA[\G]$ factors through $\LA[\EM{\G}{\H}]$, then there is a unique morphism $\phi' : \F \to \EM{\G}{\H}$ making the following diagram commute.
$$
\xymatrix
{ & \EM{\G}{\H} \ar[d] \\
\F \ar[r]_\phi \ar@{-->}[ru]^{\phi'} & \G}
$$
\end{proposition}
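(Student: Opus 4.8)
The plan is to reduce the universal property of the cut-out groupoid $\EM{\G}{\H}$ to the ordinary universal property of the blowup $\Bl{\H}{\G}$ in algebraic (or complex-analytic) geometry, and then check that the resulting map of manifolds is in fact a morphism of groupoids. First I would use the hypothesis that $\phi^{-1}(\H)\subset\F$ and $\phi^{-1}(\Y)\subset\Z$ are smooth hypersurfaces: by the universal property of blowups, a morphism $\phi:\F\to\G$ whose scheme-theoretic preimage of the center $\H$ is an effective Cartier divisor factors uniquely through the blowup, yielding a holomorphic map $\tilde\phi:\F\to\Bl{\H}{\G}$ lifting $\phi$. This is the geometric heart of the argument and is exactly the step that \cite[Theorem 2.16]{Gualtieri2012} establishes in the special case; here one simply reruns that proof verbatim with the same hypotheses.

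Next I would verify that $\tilde\phi$ lands in the open subset $\EM{\G}{\H}=\Bl{\H}{\G}\setminus(\S\cup\T)$, i.e.\ that it avoids the proper transforms $\S,\T$ of $s^{-1}(\Y)$ and $t^{-1}(\Y)$. This is precisely where the condition that $\LA[\phi]$ factors through $\LA[\EM{\G}{\H}]$ enters. By~\autoref{blwpgpd}, $\LA[\EM{\G}{\H}]$ is the elementary modification of $\LA[\G]$ along $\LA[\H]$, so the factorization hypothesis says that $\LA[\phi]$ carries sections over $\phi^{-1}(\Y)$ into $\LA[\H]$; infinitesimally this forces the image of $\tilde\phi$ to meet the exceptional divisor transversally to $\S\cup\T$ rather than along it. Concretely, near a point of $\phi^{-1}(\Y)$ one checks in local coordinates that the leading-order behaviour of $\tilde\phi$, governed by $\LA[\phi]$, points away from the proper transforms, so that $\tilde\phi$ factors through the complement and defines $\phi':\F\to\EM{\G}{\H}$.

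Having produced $\phi'$ as a holomorphic map making the triangle commute, I would finally argue that it is a morphism of Lie groupoids. Since the blowdown $\EM{\G}{\H}\to\G$ is a groupoid morphism and it is an isomorphism away from the exceptional locus (in particular over the dense open set $s^{-1}(\Z\setminus\phi^{-1}(\Y))$), the map $\phi'$ is automatically compatible with source, target, identity, and multiplication on that dense open subset, where it agrees with the unique lift of the groupoid morphism $\phi$. By continuity and the fact that $\F$, $\EM{\G}{\H}$ are complex manifolds on which the structure maps are holomorphic, these compatibilities extend across the exceptional locus, so $\phi'$ is a groupoid morphism everywhere. Uniqueness follows because $\phi'$ is uniquely determined on a dense open set and $\EM{\G}{\H}$ is separated, so any two candidates agree.

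The main obstacle I anticipate is the second step: translating the algebroid-level factorization hypothesis on $\LA[\phi]$ into the geometric statement that $\tilde\phi$ misses $\S\cup\T$. The subtlety is that the elementary modification of algebroids encodes precisely the first-order data that distinguishes $\EM{\G}{\H}$ from $\Bl{\H}{\G}$, and one must match this infinitesimal condition with the global avoidance of the proper transforms. This is exactly the point handled in the proof of \cite[Theorem 2.16]{Gualtieri2012}, which is why the authors remark that the proof is the same and omit it; the generalization requires only that the center $\H$ be an arbitrary closed subgroupoid over a smooth hypersurface rather than the specific one treated there, and the local computation is identical.
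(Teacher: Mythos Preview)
The paper omits the proof entirely, stating only that it is the same as the proof of \cite[Theorem~2.16]{Gualtieri2012}, so there is nothing to compare against directly. Your outline---lifting $\phi$ to the blowup via the universal property of blowups (using that $\phi^{-1}(\H)$ is Cartier), then using the algebroid factorization to see the lift avoids the proper transforms $\S\cup\T$, and finally checking the groupoid axioms by density and continuity---is the natural structure of such an argument and is almost certainly what the cited proof does. Your identification of the second step as the one requiring real work, and your observation that the elementary-modification description of $\LA[\EM{\G}{\H}]$ is precisely what links the algebroid hypothesis to the geometric avoidance, is on target; this is indeed the content that does not change when passing from the specific subgroupoid in \cite{Gualtieri2012} to a general $\H$ over a smooth hypersurface.
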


The blowup procedure of~\autoref{blwpgpd} implies that any Lie groupoid $\G$ over $\X$ may be modified along a smooth closed hypersurface $\Y \subset \X$: we define the \defn{twist of $\G$ along $\Y$} to be the Lie groupoid
$$
	\G(-\Y) = \EM{\G}{\gid(\Y)}
$$
obtained by blowing up $\G$ along the subgroupoid consisting of the identity elements over $\Y$.  The Lie algebroid of the identity groupoid $\gid(\Y)$ is the zero Lie algebroid, implying that the Lie algebroid of $\G(-\Y)$ is the subsheaf of sections of $\LA[\G]$ that vanish along $\Y$, i.e., $\LA[\G(-\Y)] = \LA[\G](-\Y)$.

More generally, if $\D = k_1 \Y_1 + \ldots + k_n \Y_n$  is an effective divisor supported on the smooth closed hypersurfaces $\Y_1,\ldots,\Y_n$,
we denote by $\G(-\D)$ the groupoid obtained by performing $k_j$ iterated twists of $\G$ along $\Y_j$ for all $1 \le j \le n$.  As above, the Lie algebroid of $\G(-\D)$ is identified with $\LA[\G]$ twisted by $\D$:
$$
	\LA[\G(-\D)] = \LA[\G](-\D).
$$

\begin{definition}
	Let $\X$ be a curve with effective divisor $\D$.  The \defn{pair groupoid of $(\X,\D)$} is the groupoid $\Pair{\X,\D} = \Pair{\X}(-\D)$ obtained by twisting the pair groupoid of $\X$ along $\D$.  
\end{definition}

An immediate consequence of~\autoref{prop:em-univ} is that $\Pair{\X,\D}$ is smallest integration of $\tshf{\X}(-\D)$:
\begin{corollary}
	The groupoid $\Pair{\X,\D}$ is the terminal object in the category of integrations of $\tshf{\X}(-\D)$.
\end{corollary}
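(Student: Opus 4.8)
The plan is to show that an arbitrary integration $(\G',\phi')$ of $\tshf{\X}(-\D)$ admits a unique morphism of integrations to $\Pair{\X,\D}$, obtained by lifting the canonical map to the pair groupoid through the tower of twists that defines $\Pair{\X,\D}$. First I would recall from \autoref{ex:pair} that $\Pair{\X}$ is terminal in the category of all Lie groupoids over $\X$, so there is a unique morphism $\Phi=(t,s):\G'\to\Pair{\X}$ covering $\id{\X}$; its derivative $\LA[\Phi]$ is the anchor of $\G'$, which under $\phi'$ is exactly the sheaf inclusion $\tshf{\X}(-\D)\hookrightarrow\tshf{\X}$. Writing $\Pair{\X,\D}=\Pair{\X}(-\D)$ as a finite sequence of twists $\Pair{\X}=\G_0,\G_1,\dots,\G_N=\Pair{\X,\D}$, where each $\G_{m+1}=\EM{\G_m}{\gid(\Y)}$ is a twist along a reduced component $\Y$ of $\D$, the strategy is to lift $\Phi$ one step at a time using the universal property of \autoref{prop:em-univ}.

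At each stage \autoref{prop:em-univ} applies to the lift $\Phi_m:\G'\to\G_m$ provided two conditions hold. The algebroid condition is immediate: $\LA[\Phi_m]$ is the corestriction of the anchor, namely the inclusion $\tshf{\X}(-\D)\hookrightarrow\tshf{\X}(-\D_m)=\LA[\G_m]$ for the partial divisor $\D_m\leq\D$ accumulated after $m$ twists; since $\D\geq\D_{m+1}=\D_m+\Y$, this factors through $\LA[\G_{m+1}]=\tshf{\X}(-\D_{m+1})$, which is precisely the divisor criterion of \autoref{cor:mordiv} with $f=\id{\X}$. The hypersurface condition is where the geometry enters. For the first twist it is clean: since every point $p$ in the support of $\D$ is an orbit of the algebroid, and hence of $\G'$, any arrow $g$ with $s(g)\in\Y$ automatically has $t(g)=s(g)$, so that $\Phi^{-1}(\gid(\Y))=s^{-1}(\Y)$, a smooth hypersurface because $s$ is a submersion; likewise $\Phi^{-1}(\Y)=\Y$ is a smooth hypersurface in $\X$. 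Thus \autoref{prop:em-univ} yields a unique lift $\Phi_1:\G'\to\G_1$.

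Iterating through the tower produces a morphism $F=\Phi_N:\G'\to\Pair{\X,\D}$ whose derivative is the identification $\tshf{\X}(-\D)\cong\LA[\Pair{\X,\D}]$, so $F$ is a morphism of integrations; its existence settles the first half of the claim. Uniqueness is then forced from the top down: any morphism of integrations $\G'\to\Pair{\X,\D}$ post-composes with the blowdown $\Pair{\X,\D}\to\Pair{\X}$ to a map over $\id{\X}$, which must equal $\Phi$ by terminality of $\Pair{\X}$, and the uniqueness clause of \autoref{prop:em-univ} at each stage then pins the lift down to $F$. The main obstacle I anticipate is propagating the hypersurface condition through the higher twists: after the first blowup the preimage $\Phi_m^{-1}(\gid(\Y))$ of the identity subgroupoid is generally a proper sublocus of $s^{-1}(\Y)$, because the isotropy of the intermediate groupoid $\G_m$ over $p$ is already positive-dimensional once $\D_m$ has multiplicity $\geq 2$ there. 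Verifying that this preimage remains a smooth hypersurface at each stage---equivalently, that the lifted maps meet the successive identity sections transversally along $\D$---is the technical heart of the argument, and is where one must use the explicit local structure of the twist $\EM{\G_m}{\gid(\Y)}$ supplied by \autoref{blwpgpd}.
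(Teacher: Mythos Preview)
Your approach is exactly the paper's: the corollary is stated there as an immediate consequence of \autoref{prop:em-univ}, with no further argument, and your plan of iterating the universal property through the tower of twists, together with your uniqueness argument via the terminality of $\Pair{\X}$, is the intended proof.

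The obstacle you flag at the end dissolves without recourse to the explicit blowup charts. At stage $m$, the derivative $\LA[\Phi_m]$ is the inclusion $\tshf{\X}(-\D)\hookrightarrow\tshf{\X}(-\D_m)$; over a point $p\in\Y$ where $\D$ has multiplicity $k$ strictly greater than the multiplicity $m_p$ in $\D_m$, this inclusion sends the generator $z^k\cvf{z}$ to $z^{k-m_p}\cdot z^{m_p}\cvf{z}$, which vanishes at $z=0$. Hence the group homomorphism $\Phi_m|_{s^{-1}(p)}:G'_p\to(\G_m)_p$ has zero differential and is locally constant, so $\Phi_m^{-1}(\gid(\Y))$ is open and closed in $s^{-1}(\Y)$---a union of connected components---and therefore remains a smooth hypersurface in the surface $\G'$. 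In particular, for any source-connected integration the preimage is all of $s^{-1}(\Y)$ at every stage, so your worry about a proper sublocus only materialises when $\G'$ has disconnected source fibres, and even then causes no difficulty.
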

Another notable property of $\Pair{\X,\D}$ is that it is an \emph{algebraic} Lie groupoid whenever $\X$ is an algebraic curve.  In contrast, the source-simply connected integration of $\tshf{\X}(-\D)$ is generally non-algebraic.

\begin{remark}
	Suppose that $f : (\X,\C) \to (\Y,\D)$ is a morphism in the sense of~\autoref{cor:mordiv}, so that $\C\geq f^*\D\setminus\R$.  Then by composing $f\times f$ with $(t,s) : \Pair{\X,\C} \to \Pair{\X}$, we obtain a groupoid morphism $\Pair{\X,\C} \to \Pair{\Y}$.
	\autoref{prop:em-univ} then yields a morphism of Lie groupoids
	$$
		f_* : \Pair{\X,\C} \to \Pair{\Y,\D}.
	$$
	On the other hand, any groupoid morphism $F:\Pair{\X,\C} \to \Pair{\Y,\D}$ restricts to the identity bisection to give a morphism $f : (\X,\C)\to(\Y,\D)$ such that $F = f_*$.  Therefore, the assignment $(\X,\D) \mapsto \Pair{\X,\D}$ is a fully faithful functor from the category of effective divisors on curves to the category of Lie groupoids.
\end{remark}

\subsection{Examples}
\label{sec:p1ex}

\subsubsection{Twisted pair groupoids on the affine line}

The pair groupoid $\Pair{\AF[1],k\cdot 0}$ of the affine line twisted by the origin with multiplicity $k>0$ may be described as a subspace of the $k$-fold blowup of $\Pair{\AF[1]}= \AF[1]\times\AF[1]$ in the following way. 

Begin with coordinates $(z,u)$ on $\Pair{\AF[1]}$ so that the groupoid structure is:  
$$
\begin{aligned}
s(z,u)&= z\\
t(z,u)&= z+u\\
(z_2,u_2)\cdot (z_1,u_1) &= (z_1,u_2+u_1).
\end{aligned}
$$
According to~\autoref{blwpgpd}, to construct $\Pair{\AF[1],0}$, we must blow up at $\gid(0) = (0,0)$ and remove the proper transforms $\S, \T$ of $z=0$ and $z+u=0$, respectively.  The affine chart $(z',u')$ for the blowup such that $z=z'$ and $u=z'u'$ is convenient as it excludes $\S$.  In this chart, $\T =\{1+u'=0\}$; hence $\Pair{\AF[1],0}$ is identified with the complement of $1+u'=0$ in $\AF[1]\times\AF[1]$, with groupoid structure
$$
\begin{aligned}
s(z',u')&=z'\\
t(z',u')&=(1+  u')z'\\
(z'_2,u'_2)\cdot(z'_1,u'_1) &= (z_1, u_2'(1 + u_1')+u_1').
\end{aligned}
$$
We see immediately that the map $(z',u')\mapsto (z', 1+u')$ defines an isomorphism from $\Pair{\AF[1],0}$ to the standard action groupoid $\CC^*\ltimes\AF[1]$ over $\AF[1]$.

To construct $\Pair{\AF[1],2\cdot 0}$, we blow up $\gid(0)=(0,0)$ in the already constructed $\Pair{\AF[1],0}$ and remove the proper transform of $z'=0$.  The result is again covered by a single affine chart $(z'',u'')$ such that $z' = z''$ and $u'=z''u''$.  Continuing in this way, we obtain all twists of the pair groupoid of $\AF[1]$ at zero:
\begin{theorem}\label{paircords}
The twisted pair groupoid $\Pair{\AF[1],k\cdot 0}$ may be described as the complement of the curve $1 + uz^{k-1} = 0$ in $\AF[1]\times \AF[1]$, with groupoid structure
$$
\begin{aligned}
s(z,u)&=z\\
t(z,u)&=(1 + uz^{k-1})z \\
(z_2,u_2)\cdot(z_1,u_1) &= (z_1, u_2(1 + u_1z_1^{k-1})^k + u_1).
\end{aligned}
$$
\end{theorem}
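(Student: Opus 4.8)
The plan is to argue by induction on the multiplicity $k$, using the fact that $\Pair{\AF[1],k\cdot 0}$ is by definition the iterated twist, so that $\Pair{\AF[1],k\cdot 0} = \EM{\G}{\gid(0)}$ with $\G = \Pair{\AF[1],(k-1)\cdot 0}$. The base case $k=1$ is exactly the explicit computation carried out just before the statement, where $z^{k-1} = 1$ and the removed curve is $1+u = 0$. For the inductive step I would assume that $\G = \Pair{\AF[1],(k-1)\cdot 0}$ is the complement of $1 + uz^{k-2}=0$ in $\AF[1]\times\AF[1]$, with source $z$, target $(1+uz^{k-2})z$ and the stated composition law for multiplicity $k-1$, and then compute the one further twist.

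The first geometric observation is that $\gid(0) = (0,0)$ and that, inside $\G$, the source and target fibres over $0$ coincide: both $s^{-1}(0)$ and $t^{-1}(0)$ equal $\{z=0\}\cap\G$, since the other factor $1+uz^{k-2}$ of the target vanishes only on the removed curve. Hence the proper transforms $\S$ and $\T$ of $s^{-1}(0)$ and $t^{-1}(0)$ coincide, both being the proper transform of the line $\{z=0\}$ under the blowup of $\G$ at $(0,0)$. This is the step I expect to carry the real content: I need to check that the single affine chart $(z,u')$ with $u = zu'$ is precisely the complement in the blowup of this common proper transform, so that $\EM{\G}{\gid(0)} = \Bl{(0,0)}{\G}\setminus(\S\cup\T)$ is covered by that one chart. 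Concretely, the exceptional line is $\{z=0\}$ in this chart, the proper transform of $\{z=0\}$ is invisible here (it lives only in the other chart, at $u' = \infty$), and the complement of the removed curve $1+uz^{k-2}\neq 0$ pulls back to $1 + u'z^{k-1}\neq 0$. This identifies the underlying space with the complement of $1 + u'z^{k-1}=0$.

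It then remains to transport the structure maps of $\G$ through the substitution $u = zu'$, applied to each arrow with its own $z$. The source is unchanged, and the target becomes $(1+uz^{k-2})z = (1+u'z^{k-1})z$, as required. For composition I would write each arrow $(z_i,u_i)$ as $(z_i, z_iu_i')$, use the matching condition $z_2 = (1+u_1'z_1^{k-1})z_1$, and substitute into the multiplicity-$(k-1)$ law; after factoring out $z_1$ the new $u'$-coordinate of the composite becomes $u_2'(1+u_1'z_1^{k-1})^k + u_1'$, exactly the multiplicity-$k$ formula, and relabelling $u'$ as $u$ completes the induction. No groupoid axioms need to be reverified, since $\EM{\G}{\gid(0)}$ is already a Lie groupoid by \autoref{blwpgpd}; the only real work is the chart bookkeeping in the blowup and the short composition calculation, the former being the more delicate point.
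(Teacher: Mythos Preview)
Your proposal is correct and follows essentially the same approach as the paper: the paper carries out the $k=1$ step explicitly, sketches the $k=2$ step in one sentence (noting that only the proper transform of $z'=0$ must be removed), and then writes ``Continuing in this way'' for the general case. Your write-up simply makes that phrase precise by formulating the induction explicitly, correctly observing that for $k\ge 2$ the source and target fibres over $0$ coincide so that $\S=\T$, and verifying the composition law under the substitution $u\mapsto zu'$.
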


\begin{remark}
	Although $\Pair{\AF[1],k\cdot 0}$ is an action groupoid for $k=1$, it is not so for $k>1$, because the vector field $z^k\cvf{z}$ fails to be complete for $k>1$: integration from initial position $z$ reaches a singularity in finite time, corresponding to the proper transforms that must be removed to form the groupoid.
\qed
\end{remark}

\subsubsection{Stokes groupoids}

The same technique used to construct the twisted pair groupoids may be used to construct the source-simply connected groupoids $\FG{\AF[1],k\cdot 0}$ integrating $\tshf{\AF[1]}(-k\cdot 0)$, which we call the \defn{Stokes groupoids} and denote by $\Sto{k}$.

First, we observe that $\Sto{1}=\FG{\AF[1],0}$ is the action groupoid $\CC\ltimes\AF[1]$ associated to the exponential action, which in coordinates $(z,u)$ means that 
$$
\begin{aligned}
s(z,u)&=z\\
t(z,u)&=\exp(u) z \\
(z_2,u_2)\cdot(z_1,u_1) &= (z_1,u_2+u_1).
\end{aligned}
$$
Performing the iterated blowup in $\gid(0) = (0,0)$ as before, we obtain
\begin{theorem}\label{thm:twfunline}
The Stokes groupoid $\Sto{k}$ may be identified with $\CC\times \AF[1]$, with groupoid structure
$$
\begin{aligned}
s(z,u)&=z\\
t(z,u)&=\exp({u z^{k-1}})z \\
(z_2,u_2)\cdot(z_1,u_1) &= (z_1,u_2\exp({(k-1)u_1z_1^{k-1}}) + u_1).
\end{aligned}
$$
\end{theorem}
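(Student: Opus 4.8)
The plan is to follow the same blowup computation used for \autoref{paircords}, but now starting from $\Sto{1}$ and iterating the twist $\G\mapsto\G(-0)=\EM{\G}{\gid(0)}$ of \autoref{blwpgpd}, and then to invoke uniqueness of the source-simply connected integration to conclude. I would take as given (from the paragraph preceding \autoref{thm:twfunline}) that $\Sto{1}=\FG{\AF[1],0}$ is the exponential action groupoid with $s(z,u)=z$, $t(z,u)=\exp(u)z$, and $(z_2,u_2)(z_1,u_1)=(z_1,u_2+u_1)$. This groupoid is source-simply connected, since $s^{-1}(z)=\{z\}\times\CC\cong\CC$, and its anchor sends the infinitesimal generator to $z\cvf{z}$, so it integrates $\tshf{\AF[1]}(-0)$. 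Since $\tshf{\AF[1]}(-k\cdot 0)$ is obtained from $\tshf{\AF[1]}(-0)$ by $k-1$ successive twists along the origin, it suffices to apply the twist $k-1$ times and check that the result remains source-simply connected.

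The key point, and the feature that makes the Stokes case \emph{simpler} than the pair groupoid of \autoref{paircords}, is that at every stage the source and target fibres over $0$ coincide. Indeed, inductively the target map of the $k$th groupoid has the form $t(z,u)=\exp(u z^{k-1})z$, whose exponential prefactor never vanishes, so $t^{-1}(0)=\{z=0\}=s^{-1}(0)$. Consequently the proper transforms $\S$ and $\T$ to be removed in \autoref{blwpgpd} are equal, and their common proper transform meets the exceptional divisor at the point $u'=\infty$ lying outside the affine chart $z=z'$, $u=z'u'$. Hence this single chart covers the entire twisted groupoid with no further excision, identifying it with $\CC\times\AF[1]$; the identity section stays at $u=0$, so $\gid(0)=(0,0)$ persists and the blowup may be iterated. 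I would record this coincidence explicitly, as it is what distinguishes the formulas here from those of \autoref{paircords}, where $t^{-1}(0)$ is a distinct curve and one must delete $\{1+u'=0\}$.

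For the inductive step I would substitute $z=z'$ and $u=z'u'$ into the three $\Sto{k}$ structure maps and re-express in the $(z',u')$ coordinates. The source map is unchanged, and the target becomes $\exp(u z^{k-1})z=\exp\bigl(u'(z')^{k}\bigr)z'$, which is exactly the $\Sto{k+1}$ target. For the composition, the new coordinate of the product is the old $u$-value divided by $z_1'$, giving $u'_{\mathrm{prod}}=\tfrac{z_2'}{z_1'}\,u_2'\exp\bigl((k-1)u_1'(z_1')^{k}\bigr)+u_1'$; using the composition condition $z_2'=\exp\bigl(u_1'(z_1')^{k}\bigr)z_1'$ to merge the two exponential factors yields $u_2'\exp\bigl(k\,u_1'(z_1')^{k}\bigr)+u_1'$, which is precisely the $\Sto{k+1}$ composition law. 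This is the only real computation, and the exponential bookkeeping in the composition is where I expect the main obstacle to lie.

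Finally, the resulting groupoid is again $\CC\times\AF[1]$ with simply connected source fibres $s^{-1}(z)\cong\CC$, and by the algebroid identification in \autoref{blwpgpd} it integrates $\tshf{\AF[1]}(-k\cdot 0)$. By uniqueness of the source-simply connected integration (\autoref{sec:ssc-integ}) it must therefore be $\FG{\AF[1],k\cdot 0}=\Sto{k}$, which establishes the stated coordinate description. Beyond the exponential algebra, the step that most needs care is the justification that removing the coincident proper transforms $\S=\T$ leaves exactly $\CC\times\AF[1]$ and preserves simple-connectedness of the source fibres, so that one genuinely lands on the source-simply connected integration rather than merely on some integration of $\tshf{\AF[1]}(-k\cdot 0)$.
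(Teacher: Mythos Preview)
Your proposal is correct and follows exactly the approach the paper indicates: the paper's entire proof is the single sentence ``Performing the iterated blowup in $\gid(0)=(0,0)$ as before, we obtain'', and you have carried out precisely that computation in the affine chart, together with the observation that $s^{-1}(0)=t^{-1}(0)$ forces $\S=\T$ so that no further excision is needed. Your explicit verification that the source fibres remain copies of $\CC$, and hence that the resulting groupoid is the source-simply connected one, fills in a point the paper leaves implicit.
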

Since $\Sto{k}$ is source-simply connected, there is a canonical groupoid homomorphism $E:\Sto{k}\to \Pair{\AF[1],k\cdot 0}$, given by  
\begin{equation}\label{grpexp}
	E(z,u) =
	\left(z, \frac{e^{u z^{k-1}}-1}{z^{k-1}}\right). 
\end{equation}
Furthermore, the natural groupoid homomorphism $\Sto{k+1}\to\Sto{k}$ integrating the inclusion of $\tshf{\AF[1]}(-(k+1)\cdot 0)$ in $\tshf{\AF[1]}(-k\cdot 0)$ is simply given by $(z,u)\mapsto (z,u z)$.

\subsubsection{Degree one twisted pair groupoid on the projective line}\label{sec:p1d1}
We now focus on the effective divisor $0\in \PP[1]$ of degree one.  To construct $\Pair{\PP[1],0}$, we blow up the pair groupoid $\Pair{{\PP[1]}} = \PP[1]\times \PP[1]$ along $\gid(0)=(0,0)$, and remove the rational curves $\S',\T'$ given by the proper transforms of $\S = \PP[1]\times\{0\}$ and $\T = \{0\}\times \PP[1]$, as illustrated in~\autoref{fig:p1blowup} 
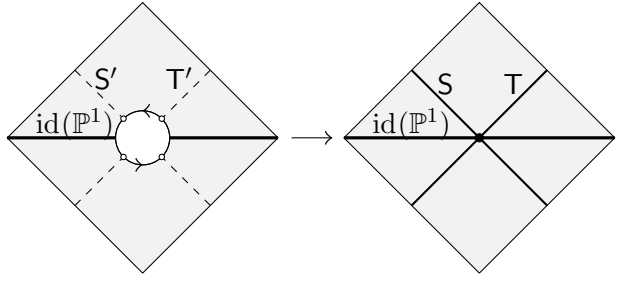
\begin{figure}
\centering
\begin{tikzpicture}[scale=1.8]
\filldraw[fill=black!5!white] (0,-1)--(1,0)--(0,1)--(-1,0)--(0,-1);
\draw[very thick] (-1,0) -- (-.2,0);
\draw[very thick] (.2,0)-- (1,0);
\draw[dashed] (-0.5,0.5) -- (-0.1414,0.1414);
\draw[dashed] (0.5,0.5) -- (0.1414,0.1414);
\draw[dashed] (0.5,-0.5) -- (0.1414,-0.1414);
\draw[dashed] (-0.5,-0.5) -- (-0.1414,-0.1414);
\draw[fill=white] (0,0) circle (.2);

\draw (0.27,0.44) node {$\T'$};
\draw (-0.27,0.44) node {$\S'$};
\draw (-0.5,0.1) node {$\gid(\PP[1])$};
\draw[decoration={markings,mark=at position 1 with {\arrow[scale=1.5]{>}}},
    postaction={decorate}] (1.1,0) -- (1.4,0);
\draw[decoration={markings,mark=at position 1 with {\arrow[scale=1.5]{>}}},
    postaction={decorate}] (0,.2) arc (90:270:.2);
\draw[decoration={markings,mark=at position 1 with {\arrow[scale=1.5]{>}}},
    postaction={decorate}]   (0,-.2) arc (270:450:.2);
\draw[fill=white] (0.1414,0.1414) circle ({0.02});
\draw[fill=white] (0.1414,-0.1414) circle ({0.02});
\draw[fill=white] (-0.1414,0.1414) circle ({0.02});
\draw[fill=white] (-0.1414,-0.1414) circle ({0.02});    
\end{tikzpicture}
\begin{tikzpicture}[scale=1.8]
\filldraw[fill=black!5!white] (0,-1)--(1,0)--(0,1)--(-1,0)--(0,-1);
\draw[very thick] (-1,0) -- (1,0);
\draw[thick] (-0.5,0.5) -- (0.5,-0.5);
\draw[thick] (-0.5,-0.5) -- (0.5,0.5);
\draw (0.25,0.4) node {$\T$};
\draw (-0.25,0.4) node {$\S$};
\draw (-0.5,0.1) node {$\gid(\PP[1])$};
\draw[fill=black]  (0,0) circle (0.03);
\end{tikzpicture}
\parbox{.82\textwidth}{\caption{The groupoid $\Pair{{\PP[1]},0}$, presented as the blowup of $\PP[1]\times\PP[1]$ at $(0,0)$ with the proper transforms of $\S=\PP[1]\times\{0\}$ and $\T=\{0\}\times\PP[1]$ deleted.}\label{fig:p1blowup}}
\end{figure}

For a clearer geometric description of the groupoid structure, blow down the disjoint $-1$-curves $\S'$ and $\T'$ to obtain a copy of $\PP[2]$ with two privileged points $p_s,p_t \in \PP[2]$.  This gives an isomorphism
$$
\Pair{{\PP[1],0}} \cong \PP[2] \setminus \{p_s,p_t\}.
$$
The identity bisection is then a line in $\PP[2]$ disjoint from $p_s$ and $p_t$, and the source and target fibres of the groupoid coincide with pencils of lines through $p_s$ and $p_t$ respectively.  These fibres are transverse, except when they coincide with the line $\overline{p_s p_t}$, which intersects the identity bisection in the original divisor $0$.  The groupoid composition is given by the geometric construction illustrated in \autoref{fig:p2gpd}.
\begin{figure}[h]
\centering
\begin{tikzpicture}[scale=1.8]
\def\p{0.8}
\def\r{0.02}
\clip[draw] (0,0) circle (1);
\filldraw[fill=black!5!white] (1,0) arc (0:360:1);
\draw[very thick] (-1,0) -- (1,0);
\draw[dotted] (0,-1) -- (0,1);
\coordinate [label=right:$h$] (g) at (-0.3,0.2);
\coordinate [label=left:$g$] (h) at (-0.5,0.2);
\coordinate [label=right:$p_s$](ps) at (0,{-\p});
\coordinate [label=right:$p_t$](pt) at (0,{\p});
\draw[fill=black] (h) circle (\r);
\draw[fill=black] (g) circle (\r);
\draw[name=sg, dotted] (ps) -- ($ (ps) ! 3.5 ! (g) $);
\draw[name=sh, dotted] (ps) -- ($ (ps) ! 3.5 ! (h) $);
\draw[name=tg, dotted] (pt) -- ($ (pt) ! 3.5 ! (g) $);
\draw[name=th, dotted] (pt) -- ($ (pt) ! 3.5 ! (h) $);
\coordinate [label=above:$gh$](gh) at (-0.35,0.375);
\draw[fill=black]  (gh) circle (\r);
\draw (-0.6,-0.15) node {$\gid(\PP[1])$};
\draw[fill=black]  (0,0) circle (0.03);
\draw (0.25,-0.15) node {$\gid(0)$};
\draw[fill=white] (0,{\p}) circle ({\r});
\draw[fill=white] (0,{-\p}) circle ({\r});
\end{tikzpicture}
\parbox{.82\textwidth}{\caption{The groupoid $\Pair{{\PP[1],0}}$, presented as $\PP[2]$ with a distinguished line $\gid(\PP[1])$ and punctured in two points $p_s, p_t$ not meeting the line.  
}\label{fig:p2gpd}}
\end{figure}
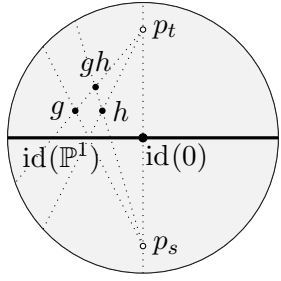
Finally, note that while the generic source fibre for $\Pair{\PP[1],0}$ is isomorphic to the affine line, the fibre over $0\in\PP[1]$ is a $\CC^*$ isotropy group, and hence this groupoid is not source-simply connected. 

\subsubsection{Reduced degree two divisor on the projective line}
\label{sec:p1d2}
Consider the reduced divisor $\D = 0+\infty$ on $\PP[1]$.  To construct $\Pair{\PP[1],\D}$, we blow up $\Pair{\PP[1]}$ in $\gid(0)$ and $\gid(\infty)$ and delete the proper transforms of the source and target fibres $\S_0,\S_\infty,\T_0,\T_\infty$ over $0,\infty$.

These proper transforms $\S_0',\S_\infty',\T_0',\T_\infty'$ are $-1$-curves; by blowing down a disjoint pair of them, say $\S_0', \S_\infty'$, we recover $\PP[1]\times\PP[1]$ and identify  
$$
\Pair{{\PP[1],0+\infty}} \cong \PP[1] \times \PP[1] \setminus \rbrac{\T_0'' \cup \T_\infty''}
$$
where $\T_0''$ and $\T_\infty''$ are the images of $\T_0'$ and $\T_\infty'$ under the blow-down.  These curves may be identified with the fibres over $0$ and $\infty$ of the first projection $\PP[1]\times\PP[1]\to\PP[1]$.  In this way, we identify
$$
\Pair{{\PP[1],0+\infty}} \cong \CC^* \ltimes \PP[1]
$$
where $\CC^* \ltimes \PP[1]$ is the groupoid associated to the action of $\CC^*$ on $\PP[1]$ fixing $0$ and $\infty$.  This groupoid is not source-simply connected, as its the source fibres are all isomorphic to $\CC^*$.  However, composing with the exponential map, we obtain an action of $\CC$ on $\PP[1]$, and the source-simply connected integration is
$$
\FG{\PP[1],0+\infty} \cong \CC \ltimes \PP[1].
$$

\subsubsection{Confluent degree two divisor on the projective line}
\label{sec:p1-2}
To obtain the groupoid associated to the divisor $2\cdot 0$ on $\PP[1]$, we must twist the pair groupoid twice along $\gid(0)$.  The first twist, performed in \autoref{sec:p1d1}, yielded
\[
\Pair{{\PP[1],0}}\cong \PP[2] \setminus \{p_s,p_t\},
\] 
and the second twist is obtained by blowing the first one up at $\gid(0)$, which is the intersection of the lines $\overline{p_sp_t}$ and $\gid(\PP[1])$.  Blowing up $\PP[2]$ at $\gid(0)$, we obtain a Hirzebruch surface $\Sig_1 \cong \PP(\sO{} \oplus \sO{}(-1))$, which fibres over the exceptional divisor via $\pi:\Sig_1\to\E$.  The proper transforms of the lines $\gid(\PP[1])$ and $\overline{p_sp_t}$ are then the fibres over the special points $p_0, p_\infty\in \E$, respectively. The groupoid is therefore given by the complement of $\pi^{-1}(p_\infty)$,
$$
\Pair{{\PP[1],2\cdot 0}} \cong \Sig_1 \setminus \pi^{-1}(p_\infty),
$$
with identity bisection given by $\pi^{-1}(p_0)$. The source and target fibres are then given by the two pencils of sections of $\pi$ passing through the images of the points $p_s, p_t$ in $\pi^{-1}(p_\infty)$. Note that all source fibres are isomorphic to the affine line, and so this groupoid is actually source-simply connected.


\subsubsection{Confluent divisors of arbitrary degree on the projective line}

The groupoid $\Pair{\PP[1],2\cdot 0}$ has an alternative description as follows: the Lie algebroid $\sA = \tshf{\PP[1]}(-2\cdot 0)$ has a one-dimensional space of global sections, given by the infinitesimal generators of the group of parabolic automorphisms of $\PP[1]$ that fix $0$.  Therefore the exponential map
$$
\cohlgy[0]{\PP[1],\sA} \to \Aut{{\PP[1]}}
$$
is an algebraic embedding, giving rise to an action of the additive group of sections of $\sA$ on $\PP[1]$ and so an identification with the action groupoid:
$$
\Pair{\PP[1],2\cdot 0} \cong \cohlgy[0]{\PP[1],\sA} \ltimes \PP[1].
$$
The evaluation map
$\cohlgy[0]{\PP[1],\sA} \otimes \sO{\PP[1]} \to \sA$
is an isomorphism, and hence the groupoid is identified with the total space of $\tshf{\PP[1]}(-2\cdot 0)$ in such a way that the source map is the bundle projection and the identity map is the zero section.  
We now perform an iterated twist of this groupoid to obtain $\Pair{\PP[1],k\cdot 0}$ with $k >2$.
\begin{theorem}\label{thm:p1-k}
For $k \ge 2$ and $p \in \PP[1]$, there is a canonical isomorphism
$$
\Pair{\PP[1],k\cdot p} \cong \Tot{\tshf{\PP[1]}(-k\cdot p)},
$$
for which the source map is the bundle projection and the identity bisection is the zero section.  In particular, $\Pair{\PP[1],k\cdot p}$ is source-simply connected, so it coincides with $\FG{\PP[1],k\cdot p}$.  Moreover, the groupoid homomorphisms
$\Pair{\PP[1],k\cdot p} \to \Pair{\PP[1],k'\cdot p}$
for $k \ge k' \ge 2$ are given by the natural maps
$$
\Tot{\tshf{\PP[1]}(k\cdot p)} \to \Tot{\tshf{\PP[1]}(k'\cdot p)}.
$$
\end{theorem}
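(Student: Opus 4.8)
The plan is to induct on $k$, taking as base case the identification $\Pair{\PP[1],2\cdot p}\cong\Tot{\tshf{\PP[1]}(-2\cdot p)}$ already obtained above from the exponential action of $\cohlgy[0]{\PP[1],\sA}$ and the evaluation isomorphism. Since $(\X,\D)\mapsto\Pair{\X,\D}$ is functorial, any automorphism of $\PP[1]$ carrying $0$ to $p$ identifies the assertion at $p$ with the assertion at $0$; I would therefore assume $p=0$ throughout and work in a coordinate $z$ centred at $0$.

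For the inductive step I would assume $\Pair{\PP[1],k\cdot 0}\cong\Tot{L}$ with $L=\tshf{\PP[1]}(-k\cdot 0)$, source equal to the bundle projection and identity equal to the zero section, and then form $\Pair{\PP[1],(k+1)\cdot 0}$ as the twist of this groupoid along $\gid(0)$ in the sense of \autoref{blwpgpd}: blow up $\Tot{L}$ at the point $\gid(0)=(0,0)$ and delete the proper transforms $\S,\T$ of the fibres $s^{-1}(0)$ and $t^{-1}(0)$. Near $0$ the bundle $L$ is framed by $z^{k}\cvf{z}$ with fibre coordinate $u$, whereas $L'=\tshf{\PP[1]}(-(k+1)\cdot 0)$ is framed by $z^{k+1}\cvf{z}=z\cdot(z^{k}\cvf{z})$ with fibre coordinate $u/z$. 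The blow-up of $\CC^2_{(z,u)}$ at the origin is covered by the chart $(z,\mu)$ with $u=z\mu$, in which $\mu=u/z$ is precisely the fibre coordinate of $L'$, together with a complementary chart meeting the exceptional divisor $E$ only near $\mu=\infty$.

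The crux is the location of $\S$ and $\T$ relative to the chart $(z,\mu)$. Here $s^{-1}(0)=\{z=0\}$ is the fibre of $L$ over $0$, with tangent direction $\cvf{u}$ at $\gid(0)$. Because $0$ lies in the divisor, the anchor of $\tshf{\PP[1]}(-(k+1)\cdot 0)$ vanishes there (\autoref{prop:iso-alg}), so $\ker{ds}=\ker{dt}$ at $\gid(0)$; hence $t^{-1}(0)$ is also tangent to $\cvf{u}$ at the identity. It follows that both proper transforms meet $E$ at the single point $\mu=\infty$, which does not lie in the chart $(z,\mu)$, and a short check shows that neither curve re-enters that chart away from $E$. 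Deleting $\S\cup\T$ therefore removes nothing from the chart $(z,\mu)$, which is a copy of $\Tot{L'}$ over a neighbourhood of $0$ whose fibre over $0$ is the full affine line $E\setminus\{\mu=\infty\}$. Since the twist is trivial over $\PP[1]\setminus\{0\}$, where $L$ and $L'$ agree, gluing yields $\Pair{\PP[1],(k+1)\cdot 0}\cong\Tot{\tshf{\PP[1]}(-(k+1)\cdot 0)}$ with source still the projection $(z,\mu)\mapsto z$ and identity still the zero section, completing the induction.

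With the total-space picture established, the remaining claims are immediate. The source fibres are the line-bundle fibres, each an affine line and so simply connected; thus $\Pair{\PP[1],k\cdot p}$ is source-simply connected and, by uniqueness of that integration, equals $\FG{\PP[1],k\cdot p}$. For $k\ge k'\ge 2$ the natural bundle map $\Tot{\tshf{\PP[1]}(-k\cdot p)}\to\Tot{\tshf{\PP[1]}(-k'\cdot p)}$ induced by the sheaf inclusion covers the identity on $\PP[1]$, carries zero section to zero section and projection to projection, and differentiates at the identity to the inclusion $\tshf{\PP[1]}(-k\cdot p)\hookrightarrow\tshf{\PP[1]}(-k'\cdot p)$; by the initial-object property of source-simply connected groupoids it is therefore the unique homomorphism integrating that inclusion. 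I expect the main obstacle to be exactly the inductive step --- specifically, verifying that the tangency $\ker{ds}=\ker{dt}$ forced by the vanishing anchor on the divisor pushes both proper transforms to the point at infinity of the chart identified with $\Tot{L'}$, so that the twist reinstalls a clean affine-line fibre over $0$ rather than puncturing it; this tangency is the geometric mechanism driving the degree drop from $\sO{}(2-k)$ to $\sO{}(1-k)$ at each stage.
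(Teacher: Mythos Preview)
Your proof is correct and follows the same inductive strategy as the paper: establish the base case $k=2$ via the exponential action, then pass from $k$ to $k+1$ by analysing the twist of $\Tot{\tshf{\PP[1]}(-k\cdot p)}$ along $\gid(p)$.

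The one place where you work harder than necessary is the inductive step.  You argue that $s^{-1}(0)$ and $t^{-1}(0)$ are \emph{tangent} at $\gid(0)$ because the anchor vanishes there, and then track both proper transforms separately.  In fact, since $p\in\D$ is an entire orbit of the groupoid (equivalently, the isotropy group over $p$ is the whole source fibre), the curves $s^{-1}(p)$ and $t^{-1}(p)$ are \emph{equal}, both coinciding with the line-bundle fibre $L|_p$.  So there is a single proper transform to remove, and the twist reduces to the elementary statement (the paper's \autoref{lem:blow-line}) that blowing up $\Tot{L}$ at $0_p$ and deleting the proper transform of $L|_p$ yields $\Tot{L(-p)}$.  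This disposes of the ``short check'' you defer and makes the continuity argument for the source and identity maps immediate.  Your treatment of the homomorphisms $\Pair{\PP[1],k\cdot p}\to\Pair{\PP[1],k'\cdot p}$ via the initial-object property is fine and matches the paper's intent.
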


\begin{proof}
Assuming that the statement is true for $k \ge 2$, the source and target fibres of $\Pair{\X,k\cdot p}$ over $p$ are equal to the fibre of $\tshf{\PP[1]}(-k\cdot p)$ over $p$.  Therefore, $\Pair{\X,(k+1)\cdot p}$ is obtained by blowing up $0_p \in \Tot{\tshf{\PP[1]}(-k\cdot p)}$ and removing the proper transform of the fibre.  By the elementary \autoref{lem:blow-line} below, the result is $\Tot{\tshf{\PP[1]}(-(k+1)\cdot p)}$.  The statements about the source and identity maps are true by continuity, since the blow-down is an isomorphism away from the fibre over~$p$.
\end{proof}

\begin{lemma}\label{lem:blow-line}
Let $\X$ be a curve, $\sL$ a line bundle on $\X$ and $p \in \X$.  Let $\Y = \Bl{\Tot{\sL}}{0_p}$ be the blowup of the total space of $\sL$ at the zero element over $p$, and let ${\sL|'_p} \subset \Y$ be the proper transform of the fibre over $p$.  Then there is a canonical identification
$$
\Y \setminus {\sL|'_p} \cong \Tot{\sL(-p)},
$$
for which the blow down is the canonical map $\Tot{\sL(-p)}\to\Tot{\sL}$.
\end{lemma}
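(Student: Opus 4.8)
The plan is to reduce the statement to a purely local computation near $p$ and then perform the blowup explicitly in standard charts. Every object involved---the blowup $\Y$, the proper transform $\sL|'_p$, the total space $\Tot{\sL(-p)}$, and the canonical map to $\Tot{\sL}$---is compatible with restriction to open subsets of $\X$, and over $\X\setminus\cbrac{p}$ the blowup centre $0_p$ is disjoint from the region, so there $\Y\cong\Tot{\sL}\cong\Tot{\sL(-p)}$ canonically. It therefore suffices to construct the identification over a coordinate neighbourhood $U\ni p$ and to check that it restricts to this trivial identification away from $p$, so that the pieces glue. Fixing a coordinate $z$ on $U$ centred at $p$ and a frame $e$ trivialising $\sL|_U$, I identify $\Tot{\sL}|_U\cong U\times\CC$ via $(z,w)\mapsto w\,e(z)$, so that $0_p$ is the origin and the fibre over $p$ is $\cbrac{z=0}$.

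Next I would compute the blowup of the smooth surface $U\times\CC$ at the origin in its two standard charts. In the chart with coordinates $(z,w')$ and $w=zw'$, the exceptional divisor is $\cbrac{z=0}$ and the proper transform of the zero section $\cbrac{w=0}$ is the section $\cbrac{w'=0}$, while the fibre $\cbrac{z=0}$ has no proper transform visible here. In the chart with coordinates $(z',w)$ and $z=z'w$, the proper transform of the fibre is the line $\cbrac{z'=0}$, which is exactly $\sL|'_p$. The two charts overlap precisely where $w'\ne 0$, equivalently $z'\ne 0$ (via $z'=1/w'$); hence the second chart minus $\cbrac{z'=0}$ is contained in the first. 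Removing $\sL|'_p$ therefore leaves the blowup covered by the first chart alone, giving $\Y\setminus\sL|'_p\cong U\times\CC$ with coordinates $(z,w')$, the blowdown being $(z,w')\mapsto(z,zw')$.

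It remains to recognise this chart as $\Tot{\sL(-p)}$. Since $z$ generates the ideal of $p$ on $U$, the product $z\,e$ is a frame for $\sL(-p)|_U$, so $\Tot{\sL(-p)}|_U\cong U\times\CC$ via $w'\mapsto w'\cdot(z\,e)$. Under the inclusion $\sL(-p)\hookrightarrow\sL$ this element is $w'\cdot(z\,e)=(zw')\,e$, so the canonical map $\Tot{\sL(-p)}\to\Tot{\sL}$ reads $(z,w')\mapsto(z,zw')$---exactly the blowdown computed above. Over $U\setminus\cbrac{p}$ this agrees with the identity identification of $\sL$ with itself, so the local isomorphism patches with the trivial one over $\X\setminus\cbrac{p}$ to produce the asserted global identification compatible with the blowdown.

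The computation is routine, and the only real care lies in the bookkeeping. Because the total-space constructions and the inclusion $\sL(-p)\hookrightarrow\sL$ are intrinsic and equivariant for the scaling $\CC^*$-action on fibres, the chart identification above is independent of the choices of $z$ and $e$, which is precisely what guarantees that the local patches glue to a well-defined global isomorphism. The step I would verify most carefully is keeping straight which of the two blowup charts contains the proper transform of the fibre---namely that the deleted curve $\sL|'_p$ sits in the chart $z=z'w$ and not in the chart $w=zw'$---since confusing the two charts would lead to deleting the wrong curve and identifying the wrong total space.
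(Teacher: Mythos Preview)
Your proof is correct. The paper treats this lemma as elementary and omits the proof entirely (it is introduced as ``the elementary \autoref{lem:blow-line}''); your local-coordinate computation in the two standard blowup charts, followed by the identification of the surviving chart with $\Tot{\sL(-p)}$ via the frame $z\,e$, is exactly the routine verification one expects and is carried out cleanly.
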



\section{Source-simply connected groupoids}
\label{sec:unif}

\subsection{Gluing over orbit covers}

Heuristically, the source-simply connected groupoid $\FG{\X,\D}$ integrating the Lie algebroid $\tshf{\X}(-\D)$ consists of all homotopy classes of paths in the complement $\U=\X\setminus\D$, together with a space of limiting paths over each point in $\D$. Such groupoids are nonlocal objects over $\X$. However, it is possible to construct them by gluing local pieces defined over an open covering of $\X$, if this covering is chosen in a manner compatible with the groupoid structure.

An \defn{orbit cover} of $\X$ is an open covering such that each orbit of the Lie algebroid is completely contained in at least one of the open sets in the cover.  Since the orbits of $\tshf{\X}(-\D)$ are the points of $\D$ and the complement of $\D$, a convenient choice of orbit cover is $\{\U,\V\}$, where $\U = \X\setminus\D$ and $\V$ is a union of disjoint open disc neighbourhoods of all the points of $\D$.  Restricting the groupoid to such a covering
leads to the following diagram of groupoid inclusions:
\begin{equation}\label{copgpd}
	\xymatrix@ur{ 
	 \FG{\X,\D}\mathrlap{|^c_\U\ar[r]} & \FG{\X,\D}\\
	 \FG{\X,\D}\mathrlap{|^c_{\U\cap\V}}\ar[u]^i\ar[r]_j & \FG{\X,\D}\mathrlap{|^c_\V\ar[u]} 
	}
\end{equation}
where $\FG{\X,\D}|^c_\U$ denotes the source-connected component of the identity bisection in the restriction of the groupoid to $\U$, and similarly for $\V, \U\cap\V$.
As shown in~\cite[Theorem 3.4]{Gualtieri2012}, if the covering is an orbit cover, the above diagram is a pushout diagram in manifolds; in other words, $\FG{\X,\D}$ may be constructed by gluing its restrictions on $\U,\V$, which are obtained as follows.  

The restriction $\FG{\X,\D}|^c_\U$ is the source-simply connected integration of the tangent sheaf $\tshf{\U}$ of the orbit $\U$, and so coincides with the fundamental groupoid $\FG{\U}$.  The restriction $\FG{\X,\D}|^c_{\U\cap\V}$, however, coincides with $\FG{\U\cap\V}$ only when each of the generating loops in the fundamental groups of the annuli comprising $\U\cap \V$ maps nontrivially into $\U$; this condition is satisfied by all divisors on all connected curves, except for the case $\X = \PP[1]$ with $\D$ supported at a single point (meaning $\U$ is contractible).  

Let $\V_p\subset\V$ be the neighbourhood of a point $p\in\D$ of multiplicity $v(p)$, and let $\phi_p:\V_p\to\DD$ be a coordinate chart.  With the same exception as above, $\phi_p$ induces an isomorphism from $\FG{\X,\D}|^c_{\V_p}$ to the restriction to the unit disc $\DD$ of the Stokes groupoid described in~\autoref{thm:twfunline}. That is, 
$$
\phi_p:\FG{\X,\D}|^c_{\V_p} \stackrel{\cong}{\longrightarrow} \Sto{v(p)}|_{\DD}.
$$ 
Restricting the maps $\{\phi_p\}$ to $\U\cap\V$, we obtain an isomorphism over a union of punctured discs:
\begin{equation}\label{eq:glugrp}
	\varphi: \FG{\U\cap\V}\stackrel{\cong}{\longrightarrow} \coprod_{p\in\D}\Sto{v(p)}|_{\DD^*}.
\end{equation}
\begin{theorem}
	Let $\X$ be a connected curve equipped with an effective divisor $\D = \sum_{p\in\X} v(p) p$, and assume $\U=\X\setminus\D$ is not contractible.  The twisted fundamental groupoid of $(\X,\D)$ is then
	$$ 
		\FG{\X,\D} \cong \FG{\U}\cup_\varphi 
			\coprod_{p\in\D} \Sto{v(p)}|_{\DD}, 
	$$
	where $\varphi$ is the identification of the open subgroupoids given by~\autoref{eq:glugrp}.
\end{theorem}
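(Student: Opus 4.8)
The plan is to realize $\FG{\X,\D}$ as the pushout of the square \eqref{copgpd} and then identify each of its three non-terminal corners with the pieces appearing in the statement; the whole argument is an assembly of facts established in the preceding discussion, with the substantive analytic input imported from \cite{Gualtieri2012}. First I would invoke \cite[Theorem 3.4]{Gualtieri2012}: because $\{\U,\V\}$ is an orbit cover of $(\X,\D)$, the square of open subgroupoid inclusions \eqref{copgpd} is a pushout in the category of (possibly non-Hausdorff) complex manifolds, and the glued manifold inherits a Lie groupoid structure canonically isomorphic to $\FG{\X,\D}$. This reduces the theorem to naming the three corners and the gluing map.

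Second, I would identify the corners using the local descriptions already in hand. Since $\U$ is a union of orbits, \autoref{isovsp} gives $\FG{\X,\D}|^c_\U \cong \FG{\U}$. Over $\V=\coprod_{p}\V_p$, the coordinate charts $\phi_p:\V_p\to\DD$ yield $\FG{\X,\D}|^c_{\V_p}\cong \Sto{v(p)}|_{\DD}$, hence $\FG{\X,\D}|^c_\V \cong \coprod_{p\in\D}\Sto{v(p)}|_{\DD}$. The gluing along the overlap is then exactly the map $\varphi$ of \eqref{eq:glugrp}: it realizes $\FG{\X,\D}|^c_{\U\cap\V}$ simultaneously as the open subgroupoid $\FG{\U\cap\V}\subset\FG{\U}$ and as $\coprod_p\Sto{v(p)}|_{\DD^*}$ sitting inside $\coprod_p\Sto{v(p)}|_{\DD}$ as the restriction over the punctured discs. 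Feeding these three identifications into the pushout produces $\FG{\U}\cup_\varphi\coprod_p\Sto{v(p)}|_{\DD}$, which is the claim.

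The step that genuinely requires the hypothesis, and which I regard as the crux, is the identification $\FG{\X,\D}|^c_{\U\cap\V}\cong\FG{\U\cap\V}$. The subtlety is that the restriction of a source-simply connected groupoid to an open set is source-connected but need not be source-simply connected. Over a point $q$ in an annular component $A=\U\cap\V_p$, the source-connected component of $t^{-1}(A)$ inside the source fibre of $\FG{\U}$ at $q$ is the cover of $A$ classified by the kernel of $\pi_1(A,q)\to\pi_1(\U,q)$; since $A$ is an annulus, $\pi_1(A,q)\cong\ZZ$, generated by a small loop around $p$. The assumption that $\U$ is not contractible is precisely the negation of the excluded case ($\X=\PP[1]$ with $\D$ supported at one point), and, as noted before the theorem, it guarantees that this generator maps nontrivially into $\pi_1(\U)$. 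As $\pi_1(\U)$ is the fundamental group of an open Riemann surface, hence free and in particular torsion-free, a nontrivial image is automatically of infinite order, so $\ZZ\to\pi_1(\U)$ is injective, the kernel is trivial, and the relevant cover of $A$ is its universal cover. Therefore $\FG{\X,\D}|^c_{\U\cap\V}\cong\FG{\U\cap\V}$, exactly as required to match the overlap in \eqref{eq:glugrp}.

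The hard part is thus entirely concentrated in controlling the source-connected restriction over the overlap; once that is settled, the remaining work is bookkeeping, matching the local Stokes models $\Sto{v(p)}$ against the global fundamental groupoid $\FG{\U}$ through $\varphi$. I would take care to check that $\varphi$ is not merely a diffeomorphism but an isomorphism of open \emph{subgroupoids} — compatible with source, target, and multiplication — so that the manifold pushout of \cite{Gualtieri2012} is upgraded to an isomorphism of Lie groupoids; this compatibility is built into the construction of $\phi_p$ as restrictions of groupoid isomorphisms, so it passes to the overlap without further analysis.
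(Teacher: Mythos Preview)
Your proposal is correct and follows essentially the same approach as the paper: the theorem is assembled from the discussion immediately preceding it, invoking the pushout property of \cite[Theorem~3.4]{Gualtieri2012} over the orbit cover $\{\U,\V\}$ and then identifying the three corners as $\FG{\U}$, $\coprod_p\Sto{v(p)}|_\DD$, and $\FG{\U\cap\V}$. One small refinement worth noting: the paper observes that the identification $\FG{\X,\D}|^c_{\V_p}\cong\Sto{v(p)}|_\DD$ \emph{also} requires the non-contractibility hypothesis (not only the overlap identification), but your torsion-free injectivity argument for the annular loop in $\pi_1(\U)$ handles this case equally well, since the source fibres of $\FG{\X,\D}|^c_{\V_p}$ over $\V_p\setminus\{p\}$ coincide with those over the overlap.
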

Since coproducts of Hausdorff groupoids over orbit covers are always Hausdorff~\cite[Proposition 3.7]{Gualtieri2012}, we conclude that the groupoids $\FG{\X,\D}$ are Hausdorff whenever $\U$ is not contractible. But we proved in~\autoref{thm:p1-k} that $\FG{\PP[1],k\cdot p}$ is Hausdorff for $k\geq 2$. So, the only remaining case to investigate is the degree one divisor $\D=p$ on the projective line.

The groupoid $\FG{\PP[1],p}$ may be described as a fibred coproduct in the following way.
By choosing a chart $\phi_p:\V\to\DD$ centred on $p$, we obtain an embedding $j:\FG{\U\cap\V}\to \Sto{1}|_{\DD^*}$.  On the other hand, we have a natural map $i:\FG{\U\cap\V}\to\FG{\U}\cong \U\times\U$.  Then 
$$
\FG{\PP[1],p} \cong (\U\times\U){_i\cup}_j \Sto{1}|_{\DD}.
$$
This coproduct fails to be Hausdorff since $i$ is not an embedding.  More concretely, while the various classes of loops surrounding the origin in $\Sto{1}$ limit to distinct points in the isotropy group over $0$, they are all forced to be equivalent in $\U$.

\subsection{Embedding in the uniformizing projective bundle}\label{sec:unifprojbdle}

In this section we give a global description of the groupoid $\FG{\X,\D}$ as an open subset of a projective line bundle over $\X$ that is equipped with a meromorphic projective connection in the sense of~\autoref{sec:meroproj}.  This projective connection derives from a uniformization of the complement $\U=\X\setminus\D$, as follows.  We focus on the most prevalent case, where $\U$ has universal cover the upper half plane $\HH$; the few remaining cases can be dealt with similarly.  

Let $\pi:\HH\to\U$ be a uniformization, and let $\tau$ be the standard coordinate on $\HH$.  The upper half plane is endowed with a canonical projective structure, and hence a projective connection on the square root $\omega^{-1/2}_\HH$ of the anti-canonical sheaf, given by 
\begin{equation}\label{unifeqn}
	\tilde{D}(fd\tau^{-1/2}) = \frac{d^2\!f}{d\tau^2} d\tau^{3/2}.
\end{equation}
This operator is invariant under the action of $\PGL{2,\RR}$, including the Fuchsian subgroup of deck transformations for the covering $\pi$, and hence it descends to a projective connection $D$ on $\omega^{-1/2}_\U$.  We now review and extend an observation of Kra~\cite[Section~1.5]{Kra1989}:

\begin{lemma}\label{uniconnection}
	The uniformizing projective connection on $\U=\X\setminus\D$ extends uniquely to a meromorphic projective connection on $\X$ with poles bounded by the reduced divisor $\D_\red\subset \D$, i.e., a projective connection for $\tshf{\X}(-\D_\red)$.
\end{lemma}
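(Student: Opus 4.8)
The plan is to localize the problem at the points of $\D$. On the open set $\U=\X\setminus\D$ the connection $D$ is already defined and holomorphic, and there it is a projective $\tshf{\U}$-connection; moreover $\U$ depends only on the support of $\D$, i.e.\ on $\D_\red$, which is why one should expect the poles of the extension to be bounded by $\D_\red$ rather than by $\D$ itself. Since a meromorphic projective connection is recorded in each chart by a single coefficient $q_z$, and since two meromorphic objects agreeing on the dense open set $\U$ must coincide, uniqueness of any extension is automatic. The whole content of the lemma is therefore to show that near each $p\in\D$ the coefficient $q_z$ extends with a pole of order at most two, which by the local description \eqref{conndef} is exactly the condition that $D$ be a projective $\tshf{\X}(-\D_\red)$-connection.

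First I would rewrite $q_z$ in terms of the developing map. Let $\tau=\tau(z)$ be a local branch of the multivalued inverse of the uniformization $\pi$ near $p$. The coordinate $\tau$ on $\HH$ has vanishing connection coefficient, $q_\tau=0$, by \eqref{unifeqn}, so applying the transformation law \eqref{projchan} with $w=\tau$ and $g=\tau(z)$ yields
$$
q_z = -S(\tau),
$$
the negative Schwarzian derivative of the developing map. The monodromy of $\tau$ around $p$ is a Möbius transformation, and the Schwarzian is invariant under post-composition with Möbius maps, so $S(\tau)$ is in fact single-valued and holomorphic on the punctured disc about $p$; only the order of its pole at $z=0$ remains in question.

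The key step is the local normal form of the uniformization at $p$. Since $\U$ is hyperbolic and $p$ is a puncture, the end at $p$ is a cusp and the monodromy around it is parabolic; conjugating it to $\tau\mapsto\tau+1$, the identity $\tau(e^{2\pi i}z)=\tau(z)+1$ shows that $\tau(z)-\tfrac{1}{2\pi i}\log z$ is single-valued, and the asymptotic analysis of the cusp, which is the observation of Kra \cite[Section~1.5]{Kra1989} that we are extending, shows that
$$
\tau(z) = \tfrac{1}{2\pi i}\log z + \psi(z),\qquad \psi \text{ holomorphic at } z=0.
$$
A direct computation of the Schwarzian then gives $S(\tau)=\tfrac{1}{2}z^{-2}+O(z^{-1})$: the logarithmic term alone contributes $S(\tfrac{1}{2\pi i}\log z)=\tfrac{1}{2}z^{-2}$, while the holomorphic correction $\psi$ only alters the residue and the regular part, since $\tau'$ has a simple pole with nonzero residue. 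Thus $q_z=z^{-2}p_0$ with $p_0\in\sO{\X}$ and $p_0(0)\neq 0$, which is precisely the local form \eqref{conndef} of a projective $\tshf{\X}(-\D_\red)$-connection; its projective (trace-free) normalization, and hence the determinant condition of \autoref{def:mer-ps}, is inherited from the trace-free operator \eqref{unifeqn} and preserved under the change of coordinate by \eqref{projchan}.

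The main obstacle is precisely this local normal form at the cusp: showing that the single-valued remainder $\psi$ is holomorphic, rather than merely meromorphic or worse, is the step that requires genuine analytic input on the hyperbolic geometry near a cusp, and it is here that I would invoke Kra's result. Once that is in hand the Schwarzian computation is routine and pins the pole order to exactly two. The remaining cases, in which $\U$ is uniformized by $\AF[1]$ and the relevant local cover is $\tau\mapsto e^{\tau}=z$ with developing map $\tau=\log z$, are handled by the identical computation, again producing a Schwarzian $S(\tau)=\tfrac{1}{2}z^{-2}+O(z^{-1})$ and hence a double pole in $q_z$.
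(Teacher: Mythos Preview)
Your proof is correct and follows essentially the same approach as the paper's: localize at each $p\in\D$, use that the cusp monodromy is parabolic to obtain the local form of the developing map, and compute the Schwarzian to see that $q_z$ has a pole of order exactly two. The paper is slightly more economical in that, instead of working in an arbitrary coordinate and carrying the holomorphic remainder $\psi$, it directly selects a coordinate $z$ in which the covering is exactly $\tau\mapsto e^{2\pi i\tau}$ (the existence of such a coordinate being equivalent to your assertion that $\psi$ extends holomorphically across $0$), giving $q_z=\tfrac{1}{4}z^{-2}$ with no lower-order terms.
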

\begin{proof}
For each $p\in\D$, we may choose a coordinate $z$ centered at $p$ so that over the punctured disc $\DD^*=\{z : 0<|z|<1\}$, the covering $\pi:\HH\to\U$ is equivalent to the quotient by a parabolic element, and so to the map $\tau\mapsto z= \exp(2\pi i \tau)$. 

The initial projective connection has coefficient $q_\tau = 0$ in the sense of~\eqref{conndef}, and by~\eqref{projchan} we compute
that $q_z = \tfrac{1}{4} z^{-2}$, so the pushforward connection is
\begin{equation}\label{regform}
D(f dz^{-1/2}) = (f''  + \tfrac{1}{4} z^{-2} f) dz^{3/2},
\end{equation}
which is the form of a projective $\sA$-connection for $\sA = \tshf{\DD}(-0)$ and hence for the reduced divisor, as required.  
\end{proof}

The projective connection for $\tshf{\X}(-\D_{\red})$ described above naturally induces a projective connection for $\tshf{\X}(-\D)$, by transporting the associated first order connection via the morphism of jet bundles $\varphi$:
$$ 
\xymatrix@R=1.5em@C=1.5em{0\ar[r] &\tshf{\X}(-\D_\red)^\vee\otimes\omega_\X^{-1/2}\ar[r]\ar[d] & \sJet[1][\tshf{\X}(-\D_{\red})]\omega^{-1/2}_\X\ar[r]\ar[d]^{\varphi} & \omega_\X^{-1/2}\ar[r]\ar@{=}[d] & 0\\
0\ar[r] &\tshf{\X}(-\D)^\vee\otimes\omega_\X^{-1/2}\ar[r] & \sJet[1][\tshf{\X}(-\D)]\omega^{-1/2}_\X\ar[r] & \omega_\X^{-1/2}\ar[r] & 0
}
$$
The passage from the $\tshf{\X}(-\D_{\red})$ jet bundle to the $\tshf{\X}(-\D)$ jet bundle is a composition of elementary modifications at the points of the divisor $\D$.  To see how this modification affects the projective connection, we make the following local computation. 

If $D(f) = \rbrac{\delta^2 f + p_{1}\delta f + p_0f} \alpha^2$ is a second order connection for $\tshf{\AF[1]}(-n\cdot 0)$, written in terms of the dual bases $\delta = z^n\cvf{z}$ and $\alpha = z^ndz$, then by conjugating its associated first order connection~\eqref{eqn:jet-conn} by the morphism $\varphi$ from $\tshf{\AF[1]}(-n\cdot 0)$-jets to $\tshf{\AF[1]}((-n-1)\cdot 0)$-jets, we obtain 
\begin{equation}\label{modcnx}
\varphi \nabla \varphi^{-1} = d + 
\begin{pmatrix*}
    0 & -1  \\
    z^2p_{\mathrlap{0}} & zp_{\mathrlap{1}}-z^n  
  \end{pmatrix*} z^{-n-1}dz,
\end{equation}
which corresponds to the second order $\tshf{\AF[1]}(-(n+1)\cdot 0)$--connection
$$
D^\varphi(f) = \rbrac{\delta^2 f + (zp_{1}-z^n)\delta f + (z^2p_0)f} \alpha^2,
$$
where now $\delta = z^{n+1}\cvf{z}$ and $\alpha = z^{-n-1}dz$.
\begin{definition}
	The uniformizing projective bundle $\P(\X,\D)$ is defined by
	\begin{equation*}
		\P(\X,\D)= \PP(\sJet[1][\tshf{\X}(-\D)]{\can[\X]^{-1/2}}),
	\end{equation*}
	equipped with the Ehresmann $\tshf{\X}(-\D)$-connection induced by the projective connection of~\autoref{uniconnection} with the elementary modifications described above.
\end{definition}

Since $\P(\X,\D)$ is equipped with a $\tshf{\X}(-\D)$-connection, there is a natural action of the source-simply connected integration $\FG{\X,\D}$ on $\P(\X,\D)$, i.e., a homomorphism of groupoids 
$$
\Psi : \FG{\X,\D} \to \PGL{\P(\X,\D)}.
$$
The bundle $\P(\X,\D)$ has a canonical section $\sigma : \X \to \P(\X,\D)$, given by the jet sequence.  Acting on this section, we obtain the following map.
\begin{align}
\mapdef{\iota}{\FG{\X,\D}}{\P(\X,\D)}
			{ g } { \Psi(g)\cdot \sigma(s(g)) }\label{eqn:embed-def}
\end{align}
In the remainder of this section, we show that the map $\iota$ is an open embedding.
\begin{theorem}
The source-simply connected groupoid $\FG{\X,\D}$ integrating $\tshf{\X}(-\D)$ is naturally identified with the open subset of the uniformizing projective bundle $\P(\X,\D)$ formed by the union of all leaves of the horizontal foliation that intersect the image of the section $\sigma:\X\to\P(\X,\D)$.  

In this embedding, the source and target fibres coincide with the
horizontal and vertical foliations, respectively, and the identity bisection coincides with $\sigma$.
\end{theorem}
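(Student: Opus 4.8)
The plan is to show that $\iota$ is an injective immersion and then conclude by a dimension count: the arrow space of $\FG{\X,\D}$ and the total space of $\P(\X,\D)$ are both smooth complex surfaces, so any injective immersion between them is automatically an open embedding, and the structural claims are read off along the way. The compatibilities are immediate from the definitions. Since $\Psi(g)$ maps $\P(\X,\D)|_{s(g)}$ to $\P(\X,\D)|_{t(g)}$, we have $\pi\circ\iota = t$ for the bundle projection $\pi$, so $\iota$ sends each target fibre $t^{-1}(q)$ into the single fibre $\P(\X,\D)|_q$, i.e.\ into a leaf of the vertical foliation; and since $\Psi(\gid(p))=\id$ we get $\iota(\gid(p))=\sigma(p)$, identifying $\iota$ with $\sigma$ on the identity bisection. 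Finally, because $\Psi$ integrates the Ehresmann $\tshf{\X}(-\D)$-connection in the sense of \autoref{integrep}, the point $\Psi(g)\cdot\sigma(s(g))$ is exactly the endpoint of the horizontal lift of $g$ issuing from $\sigma(s(g))$; hence as $g$ ranges over the source fibre $s^{-1}(p)$, its image sweeps out precisely the horizontal leaf through $\sigma(p)$. This simultaneously identifies source fibres with horizontal leaves, target fibres with vertical fibres, and exhibits the image as the union of all horizontal leaves meeting $\sigma(\X)$.

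For the immersion property I would use $\pi\circ\iota = t$ to reduce to a single transversality check. Both $t$ and $\pi$ are submersions onto $\X$, with $\ker dt$ tangent to the target fibres and $\ker d\pi$ tangent to the vertical fibres; since $d\pi\circ d\iota = dt$, the map $d\iota$ carries $\ker dt$ into $\ker d\pi$ and induces the identity on the one-dimensional quotients, both canonically the tangent line to $\X$ at $t(g)$. Thus $d\iota$ is an isomorphism if and only if it is injective along target fibres. Over $\U=\X\setminus\D$ this is exactly the statement that $\sigma$ is transverse to the horizontal foliation, which is built into \autoref{def:mer-ps} through the isomorphism $\sigma^*\tshf{\P/\X}\cong\tshf{\X}$: the vertical component of $d\sigma$ is nonvanishing, and the transport $\Psi(g)$ propagates this to every arrow. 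Over a point $p\in\D$ of multiplicity $k$ the source and target directions both degenerate into the isotropy, which by \autoref{isovsp} is the additive group $(\ctb[p]{\X})^{k-1}\cong\CC$; in the coordinates of \autoref{thm:twfunline} it acts on $\P(\X,\D)|_p\cong\PP[1]$ as a parabolic one-parameter subgroup fixed by the leading connection coefficient of \eqref{regform} and \eqref{modcnx}, and $\sigma(p)$ is not its fixed point, so the orbit map through $\sigma(p)$ is an immersion.

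Injectivity is the crux, and I would reduce it by groupoid inversion. If $\iota(g_1)=\iota(g_2)$ then $t(g_1)=t(g_2)=:q$, so $h=g_2^{-1}g_1$ is defined, and multiplicativity of $\Psi$ gives $\iota(h)=\Psi(g_2)^{-1}\iota(g_1)=\sigma(t(h))$. It therefore suffices to prove that $\iota(h)=\sigma(t(h))$ forces $h$ to be a unit; geometrically, that the horizontal leaf through $\sigma(s(h))$ meets $\sigma(\X)$ only at its basepoint. Over $\U$ this is precisely injectivity of the developing map of the uniformizing structure: pulling back along $\pi:\HH\to\U$ turns \eqref{unifeqn} into the standard flat structure, so the horizontal leaves become the constant sections $\HH\times\{c\}$ while $\sigma$ becomes the graph of the biholomorphic developing map, and $\iota(h)=\sigma(t(h))$ reduces to $\mathrm{dev}(\tau_0)=\mathrm{dev}(\tau_1)$, whence $\tau_0=\tau_1$; since the source fibres of the source-simply connected groupoid are copies of the simply connected $\HH$, this means $h$ is a unit. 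Near a pole the same computation in the Stokes coordinates of \autoref{thm:twfunline}, with connection coefficient $q_z=\tfrac{1}{4}z^{-2}$, shows the parabolic orbit map on the isotropy $\CC$ is injective, so no nontrivial isotropy element solves $\iota(h)=\sigma(t(h))$ either.

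Combining these, $\iota$ is an injective immersion of complex surfaces, hence a local biholomorphism with open image, i.e.\ an open embedding; its image is the leaf-saturation of $\sigma(\X)$ found in the first step, and the source and target fibres match the horizontal and vertical foliations as claimed. I expect all the genuine difficulty to sit in the injectivity step, and specifically in marrying the global input—injectivity of the developing map, which is the heart of uniformization—to the local behaviour at the poles, where the Stokes phenomenon manifests as the parabolic degeneration of the horizontal foliation and one must verify that distinct homotopy classes of loops around a pole land on distinct isotropy elements rather than collapsing, as they do precisely in the single excluded case $\X=\PP[1]$ with $\D$ supported at one point, where $\U$ is contractible and $\iota$ ceases to be injective.
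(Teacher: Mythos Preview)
Your proof is correct and rests on the same two analytic inputs as the paper---injectivity of the developing map of the uniformizing projective structure over $\U$, and the parabolic (hence free) action of the isotropy on the fibre over each pole---but the organization differs. The paper argues source-fibre by source-fibre: it shows each $s^{-1}(p)$ maps isomorphically onto a horizontal leaf (universal-cover argument over $\U$, explicit matrix computation \eqref{residuek} at poles), then checks that distinct source fibres hit distinct leaves via the ratio $\eta_2/\eta_1=\tau$, and finally invokes the fact that an injective holomorphic map between equidimensional complex manifolds is automatically an open embedding, never touching the immersion property directly. Your groupoid-inversion trick $h=g_2^{-1}g_1$ is a genuine simplification: it collapses the two-step injectivity check (``within a leaf'' and ``between leaves'') into the single statement $\iota(h)=\sigma(t(h))\Rightarrow h\in\gid(\X)$, which then reduces uniformly to $\mathrm{dev}(\tau_0)=\mathrm{dev}(\tau_1)$ over $\U$ and to freeness of the parabolic action at the poles. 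Your separate immersion argument is therefore redundant in the holomorphic setting, though not wrong; and the isomorphism you quote as $\sigma^*\tshf{\P/\X}\cong\tshf{\X}$ is actually $\cong\sA=\tshf{\X}(-\D)$ in the meromorphic framework, which agrees with $\tshf{\X}$ only over $\U$---but that is the only place you use it, so the argument is unaffected.
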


\begin{proof}
We first show that the source fibres of $\FG{\X,\D}$ are mapped by $\iota$ isomorphically onto leaves of the horizontal foliation of $\P(\X,\D)$.

The action of the source fibre through $p \in\X$ is given by applying the parallel transport of the $\PGL{2,\CC}$-connection to the point $\sigma(p) \in \P(\X,\D)$.  Hence, the source fibres must map to horizontal leaves.  Suppose that $p \in \U = \X \setminus \D$.  Since $\FG{\X,\D}|_\U = \FG{\U}$ is the fundamental groupoid of $\U$, the source fibre through $p$ is a universal covering of $\U$.  Moreover, $\P(\X,\D)|_\U$ is identified with the uniformizing projective structure for this covering.  It follows that the action by this source fibre consists of lifting homotopy classes of paths in $\U$ to a universal cover of $\U$ and is therefore an isomorphism.

It remains to check the source fibres over $\D$.  Let $z$ be a coordinate centered at a point $p\in\D$ of multiplicity $k$.  Applying the modification \eqref{modcnx} to the connection~\eqref{regform} for the reduced divisor, we see that the connection on the jet bundle near $p$ has the form
\begin{equation}\label{locprojcon}
\nabla = d + \begin{pmatrix}
0 & -1 \\
\frac{1}{4} z^{2(k-1)} & -kz^{k-1}
\end{pmatrix}z^{-k}dz,
\end{equation}
in the basis $(dz^{-1/2}, z^{-k}dz\otimes dz^{-1/2})$.  
By~\autoref{isovsp}, the isotropy group of $\FG{\X,\D}$ over $p$ may be identified with additive group of complex numbers, which acts on the fibre $\P(\X,\D)|_p$ by the exponential of the leading order coefficient in~\eqref{locprojcon} at $z=0$, namely
\begin{equation}\label{residuek} 
A = \begin{pmatrix}0&-1\\0&0\end{pmatrix}\text{ for } k>1,\hspace{3em}
A = \begin{pmatrix}0&-1\\\tfrac{1}{4}&-1\end{pmatrix}\text{ for } k=1.
\end{equation}
In both cases, the trace-free part of $A$ has rank $1$ and satisfies
$$
A \cdot \begin{pmatrix}
0 \\ 1
\end{pmatrix} \ne 0,
$$
implying the isotropy action is by parabolic translations of the projective fibre and that $\sigma(p)$ is not a fixed point. Hence the isotropy group acts freely on the orbit through $\sigma(p)$, as required.

Therefore, $\iota$ maps each source fibre injectively into
a horizontal leaf of $\P(\X,\D)$.  To see that $\iota$ is injective, it therefore suffices to show that distinct source fibres map to distinct horizontal leaves.  But this fact is an immediate consequence of the fact that the sheaf of flat sections of $\can[\X]^{-1/2}$ over $\U$ has a basis $\eta_1,\eta_2$ given by multisections whose ratio $\eta_2/\eta_1 = \tau$ is an inverse to the universal covering map $\HH \to \U$.

We conclude that $\iota$ is an injective holomorphic map between complex manifolds of equal dimension, and so it is an open embedding.  The statements concerning the groupoid structure follow directly from the formula \eqref{eqn:embed-def} defining $\iota$.
\end{proof}

\begin{remark}
	The elementary modifications used to obtain the connection on the $\tshf{\X}(-\D)$ jet bundle may be described purely as operations on projective bundles: $\P(\X,\D)$ is obtained from $\P(\X,\D_\red)$ by a series of blowups and blowdowns as follows. If $p\in\C$, then we obtain $\P(\X,\C+p)$ by first blowing up $\P(\X,\C)$ at $\sigma(p)$, then blowing down the proper transform of the fibre $\P(\X,\C)|_p$.  
	Note that this operation on projective bundles, when restricted to the image of the open embedding $\iota$, coincides with the blowup operation on groupoids introduced in~\autoref{biratgpd}.\qed
\end{remark}

\begin{remark}
An alternative description of the image of $\iota$ in $\P(\X,\D)$ is as follows.  Over each point $p \in \D$, the image intersects the projective fibre in the complement of a single point---the unique fixed point for the action of the isotropy group.  
Over $\U=\X\setminus\D$, the projective bundle $\P(\X,\D)$ inherits a fibrewise Hermitian metric from the uniformization $\pi:\HH\to\U$; the image of $\iota$ may then be described as the set of points within a fixed distance of the section $\sigma$.\qed
\end{remark}

\subsection{The hypergeometric groupoid}

The hypergeometric family of equations, parametrized by $a,b,c\in\CC$, is given by the second order ODE
\begin{equation}\label{gausshpg}
z(1-z) f'' + (c - (a+b+1)z) f' - ab f = 0.
\end{equation}
We may write this equation as $Df = 0$, where $D$ is a second order $\sA$-connection on $\sO{\PP[1]}$ and the algebroid $\sA = \tshf{\PP[1]}(-\D)$ is associated to the divisor $\D=0+1+\infty$ over which the equation has regular singular points.  In a basis $\delta = z(1-z)\cvf{z}$ for $\sA$ over $\AF[1]=\PP[1]\setminus\{\infty\}$ (and the dual basis $\alpha$ for $\sA^\vee$), we have 
\begin{equation} \label{hyprgm}
Df = (\delta^2 f + p_1 \delta f + p_0 f)\alpha^2, 
\end{equation}
where $p_1 = c-1 - (a+b-1)z$ and $p_0 = -abz(1-z)$. 
The corresponding $\sA$-representation~\eqref{eqn:jet-conn} is then an $\sA$-connection on $\sJet[1][\sA]\sO{\PP[1]}\cong \sO{\PP[1]}\oplus\sO{\PP[1]}(1)$.

We now give a global description of the \defn{hypergeometric groupoid} $\FG{\PP[1],\D}$ which serves, in particular, as the universal domain of definition for the fundamental solutions to all equations in the hypergeometric family described above. As shown in~\autoref{sec:unifprojbdle}, it is an open subset of the projective bundle $\PP(\sJet[1][\sA]{\can[{\PP[1]}]^{-1/2}}) \cong \PP(\sO{}(1)\oplus\sO{}(2))$, with target map given by the bundle projection and identity bisection given by the inclusion map in the jet sequence.  The source map is described as follows.  

Let $\U = \PP[1]\setminus\D$, and recall that the universal covering map $\HH \to \U$ is given by the elliptic modular function $\lambda(\tau)$.  Using \eqref{conndef} and \eqref{projchan}, the induced projective connection on $\U$ is given by
$$
D( f dz^{-1/2} ) = \rbrac{\deriv{^2\,f}{z^2} + \frac{z^2-z+1}{4z^2(1-z)^2}f} dz^{3/2}.
$$
This operator has an identical form in the coordinate at infinity, and so extends to a projective $\sA$-connection. In fact, it coincides with a particular hypergeometric operator~\eqref{hyprgm} with $a = b = \tfrac{1}{2}$ and $c=1$, twisted by the rank one $\sA$-representation $\can[{\PP[1]}]^{-1/2}\cong\sO{\PP[1]}(1)$ with flat multisection $\sqrt{z(1-z)}dz^{-1/2}$.  The standard solution $d\tau^{-1/2}$ on $\HH$ pushes down to the multisection
$$   
\eta_1=(\pi i)^{1/2} F(\tfrac{1}{2},\tfrac{1}{2},1; z)\sqrt{z(1-z)}dz^{-1/2},
$$
and so the other solution $\tau d\tau^{-1/2} = (d(-\tau^{-1}))^{-1/2}$ pushes down to  
$$
\eta_2=i (\pi i)^{1/2}F(\tfrac{1}{2},\tfrac{1}{2},1; 1-z)\sqrt{z(1-z)}dz^{-1/2},
$$
where we have used the property $\lambda(-\tau^{-1}) = 1-\lambda(\tau)$. The multivalued function $F(\tfrac{1}{2},\tfrac{1}{2},1; z)$ is the usual hypergeometric function solving~\eqref{gausshpg} with $a=b=\tfrac{1}{2}$ and $c=1$.
%

Let $j = xu + y\alpha u$ be an element of the jet bundle over the point $p\in \U$, written in the standard basis where $u=dz^{-1/2}$ and $\alpha = (z(1-z))^{-1} dz$.  We may extend it uniquely to a multisection $\xi = a \eta_1 + b\eta_2$ of $\can[\U]^{-1/2}$ in the kernel of $D$, where the constants $a, b$ are given by the quotients of Wronskians 
\begin{equation*}
	a = \frac{\phantom{^1_p\eta_1}j\wedge j^1_p \eta_2}{j^1_p\eta_1\wedge j^1_p\eta_2},\hspace{3em} b= \frac{j^1_p\eta_1\wedge j\phantom{^1_p\eta_2}}{j^1_p\eta_1\wedge j^1_p\eta_2}.
\end{equation*}
The value of the source map $s([j])=q$ is then given by the unique point where $a\eta_1(q) + b\eta_2(q) = 0$.  Using the fact that $\lambda(\eta_2(z)/\eta_1(z)) = z$, we obtain 
$$ 
	s([j]) = \lambda(-\tfrac{a}{b}).
$$
Computing Wronskians with $\eta_i = f_i dz^{-1/2}$ and $\delta = z(1-z)\cvf{z}$, we obtain the groupoid source map in homogeneous coordinates $[x:y]$ for $[j]$:
$$
s([x:y]) = \lambda \left( \frac{yf_2 - x \delta f_2}{yf_1 - x\delta f_1}\right). 
$$
The apparent multivaluedness of the above expression is resolved by the constraint $s(\gid(p))=p$ on the identity bisection.

\section{Extension of solutions over singularities}
\label{sec:solns}
Let $\X$ be a curve with effective divisor $\D$, and let $(\sE,\nabla)$ be a vector bundle of rank $r$ with a meromorphic connection whose poles are bounded by $\D$.  Traditionally, a \emph{fundamental solution} to this system is the choice of a flat basis, i.e., a flat isomorphism 
$$
\psi:\sO{\X}^r\to \sE.
$$
Of course, such an isomorphism would trivialize $(\sE,\nabla)$, and so $\psi$ is typically singular along $\D$, as well as multivalued away from $\D$.  

By viewing $(\sE,\nabla)$ as a representation of $\tshf{\X}(-\D)$, we know from~\autoref{integrep} that it integrates to a unique groupoid representation, hence a holomorphic bundle isomorphism
$$
\Psi:s^*\sE\stackrel{\cong}{\longrightarrow} t^*\sE
$$
over the source-simply connected groupoid $\FG{\X,\D}$ constructed in~\autoref{sec:unif}.  

The isomorphism $\Psi(g)$, for $g\in \FG{\X,\D}$ is, generically, the parallel transport from $s(g)$ to $t(g)$, implying the following result.
\begin{proposition}
For any fundamental solution $\psi$ as above, the expression
\begin{equation}\label{deltagrpd}
	\delta \psi = t^*\psi \circ (s^*\psi)^{-1}
\end{equation}
extends holomorphically to $\FG{\X,\D}$ and coincides with the representation $\Psi$. The apparent multivaluedness of~\eqref{deltagrpd} is resolved by requiring $\delta\psi=1$ over the identity bisection.
\end{proposition}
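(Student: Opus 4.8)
The plan is to deduce the statement from \autoref{integrep}, which already furnishes a holomorphic groupoid representation $\Psi$ defined on all of $\FG{\X,\D}$. It then suffices to identify $\Psi$ with $\delta\psi$ on the dense open subgroupoid $\FG{\X,\D}|_\U = \FG{\U}$ of \autoref{isovsp}, and to let the holomorphy of $\Psi$ supply the extension across $\D$. The genuinely analytic difficulty—that a frame which is singular along $\D$ nonetheless extends over the points of $\D$—will thereby be entirely absorbed into \autoref{integrep}, so that no estimates near the poles are needed here.

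On $\FG{\U}$ the representation $\Psi$ is, by the construction underlying \autoref{integrep}, simply the parallel transport of $\nabla$. The key elementary fact is that parallel transport, read off in a flat frame, is computed by the fundamental solution: if $g\in\FG{\U}$ is a homotopy class of path from $p=s(g)$ to $q=t(g)$, then analytically continuing $\psi$ along the path yields $\Psi(g)=\psi(q)\circ\psi(p)^{-1}$, which is exactly the value of $t^*\psi\circ(s^*\psi)^{-1}=\delta\psi$ at $g$. This simultaneously shows that $\delta\psi$ agrees with $\Psi$ on $\FG{\U}$ and that, once pulled back to the groupoid, it is genuinely single-valued there. To make the latter precise I would track the monodromy: the multivaluedness of $\psi$ on $\U$ is governed by a right action, $\psi\mapsto\psi M$ with $M\in\GL{r,\CC}$ around a loop, and this ambiguity cancels in the ratio, since changing the branch at $p$ by $\psi(p)\mapsto\psi(p)M$ transports along $g$ to $\psi(q)\mapsto\psi(q)M$ and hence leaves $\psi(q)M(\psi(p)M)^{-1}=\psi(q)\psi(p)^{-1}$ unchanged. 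Equivalently, each source fibre of $\FG{\U}$ is a universal cover of $\U$, so $t^*\psi$ continues to a single-valued frame along it while $s^*\psi$ is the fixed value at the base point; the normalization $\delta\psi=1$ over the identity bisection, which holds tautologically as $\psi(p)\psi(p)^{-1}=1$, is precisely the condition pinning down the common branch at source and target. One may also confirm directly that $\delta\psi$ is multiplicative, the interior factor $\psi(s(g_1))^{-1}\psi(t(g_2))$ collapsing to the identity whenever $g_1 g_2$ is composable.

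Finally, since $\Psi$ is holomorphic on the whole of $\FG{\X,\D}$ while $\FG{\U}$ is a dense open subset—its complement $s^{-1}(\D)\cup t^{-1}(\D)$ being a proper analytic subset—the equality $\delta\psi=\Psi$ on $\FG{\U}$ exhibits $\Psi$ as the unique holomorphic extension of $\delta\psi$, which is the assertion. The step requiring the most care is the branch bookkeeping of the preceding paragraph, namely the verification that the monodromy ambiguity of $\psi$ cancels between $t^*\psi$ and $(s^*\psi)^{-1}$ so that $\delta\psi$ descends to a single-valued holomorphic map on the simply connected source fibres of the groupoid.
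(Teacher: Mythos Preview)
Your proposal is correct and follows essentially the same approach as the paper: the paper states this proposition as an immediate consequence of the fact that $\Psi(g)$ is, generically (i.e., over $\U$), the parallel transport from $s(g)$ to $t(g)$, which is exactly your identification $\Psi=\delta\psi$ on $\FG{\U}$ combined with the holomorphic extension supplied by \autoref{integrep}. Your explicit monodromy bookkeeping and the density argument spell out what the paper leaves implicit.
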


Note that $\Psi$ is independent of the choice of fundamental solution $\psi$.  To reconstruct $\psi$ from $\Psi$, we must choose a basis $\psi(x_0)\in\sE|_{x_0}^{\oplus r}$ over a point $x_0\in\U=\X\setminus\D$, in which case $\psi$ is determined by the condition 
$$
t|_{s^{-1}(x_0)}^*\psi = \Psi (s^*\psi(x_0)),
$$
which holds along $s^{-1}(x_0)$, the universal cover of $\U$.

\begin{example}({Rank one representations on $\AF[1]$})

Fix $p\in\AF[1]$ and $k\geq 1$ in $\ZZ$, as well as the rank one representation for $\tshf{\AF[1]}(-k\cdot p)$ on $\sO{\AF[1]}$ given, for fixed $a\in\CC$, by 
\begin{equation}\label{picrkone}
\nabla = d + az^{-k}dz
\end{equation}
in a coordinate centred at $p$.  The equation $\nabla\psi = 0$ has (singular, multivalued) solutions generated by $\psi_k$, where  
\begin{equation}\label{badsol}
	\begin{aligned}
	\psi_1 &= z^{-a},\\
	\psi_k &= \exp(\tfrac{az^{-(k-1)}}{k-1}),\hspace{2ex}k>1.
	\end{aligned}
\end{equation}
The groupoid representation $\Psi_k$ of $\Sto{k}$ integrating the connection~\eqref{picrkone}, written in the coordinates from~\autoref{thm:twfunline}, is given by
\begin{equation}\label{grpdrep}\begin{aligned}
	\Psi_1 &= t^*\psi_1 (s^*\psi_1)^{-1} = e^{-a u},\\
	\Psi_k &= t^*\psi_k (s^*\psi_k)^{-1} = e^{-aS_k},
\end{aligned}\end{equation}
where $S_k,\ k>1$, is the analytic additive function $\Sto{k}\to\CC$ given by 
$$
S_k(z,u) = \frac{1-e^{-u(k-1)z^{k-1}}}{(k-1)z^{k-1}}.
$$
These representations descend to $\Pair{\AF[1],k\cdot p}$ via the homomorphism~\eqref{grpexp} precisely when they have trivial monodromy.  This situation occurs for $k=1$ when $a$ is integral, and for $k>1$ without condition on $a$.  The resulting representations $\Psi_k'$ of $\Pair{\AF[1],k\cdot p}$ are:
\begin{equation}\label{grpdrep2}
\Psi_1'= (1+u)^{-a},\hspace{2em} \Psi_k' = e^{-aS'_k},
\end{equation}
where $S'_k,\ k>1$, is given by the smooth additive function
\begin{equation}\label{essfn}
	S'_k(z,u) = \frac{1-(1+uz^{k-1})^{-(k-1)}}{(k-1)z^{k-1}}.
\end{equation}

These examples demonstrate the fact that, unlike traditional fundamental solutions~\eqref{badsol}, the representations~\eqref{grpdrep}, \eqref{grpdrep2} are smooth, single-valued, and everywhere invertible when written on the appropriate groupoid.\qed
\end{example}

\subsection{Summation of divergent series}

In attempting to solve an ordinary differential equation using a power series expansion, one encounters the phenomenon that if the origin is an irregular singular point, the resulting series will generally have zero radius of convergence.  A simple example is the inhomogeneous equation $z^2f' = f-z$, whose solution in formal power series centred at the singular point $z=0$ is
\begin{equation}\label{firstdivex}
\hat f(z)= \sum_{n=0}^\infty n!\,z^{n+1}.	
\end{equation}
Such divergent power series are called formal solutions, and indeed they are solutions of the system when restricted to the formal neighbourhood of the singular point.  
The formal method for analyzing ODEs in general, extensively developed by Sibuya~\cite{MR0096016} (see also Wasow~\cite{MR0203188} and Balser~\cite{MR1722871}), has led to an extensive library of analytic methods, including the summation procedures of Borel, \'Ecalle, and Ramis, by which the formal solution is used to obtain actual solutions, which in certain sectors around the singular point have asymptotic expansions given by the formal series.     

We now present an alternative way to understand why and how formal series may be used to construct actual solutions, making essential use of the groupoids introduced in~\autoref{sec:constr}.  
Suppose that $(\sE_1,\nabla_1)$ and $(\sE_2,\nabla_2)$ are two representations of the Lie algebroid $\sA=\tshf{\X}(-\D)$, and that we have a \emph{formal isomorphism}
\[
\xymatrix{(\hsE_1,\hnabla_1)\ar[r]^{\hphi} & (\hsE_2,\hnabla_2)},
\]
meaning an isomorphism of the restrictions of the representations to the formal neighbourhood\footnote{Also known as the \emph{formal completion} of $\D$ in $\X$.} $\hX$ of $\D$ in $\X$. Recall that ``functions'' on $\hX$ are represented by formal power series in coordinates centred at the points of $\D$, and that the restriction of sections of vector bundles to $\hX$ simply means expanding them in Taylor series.
By~\autoref{integrep}, each representation $(\sE_i,\nabla_i)$ integrates to a representation $\Psi_i$ of the groupoid $\G=\FG{\X,\D}$, which may then be restricted to the formal neighbourhood $\hG$ of the full subgroupoid $\G|_\D$ of $\G$ over $\D$ (which coincides with $\G|_{\hX}$). 
Call the resulting formal representations $\hPsi_1, \hPsi_2$.
By the same reasoning as in the proof of~\autoref{integrep},  the fact that $\hphi$ is a formal isomorphism of algebroid representations implies that we have the following commutative diagram:
\[
\xymatrix{
s^*\hsE_1 \ar[r]^{\hPsi_1} \ar[d]_{s^*\hphi} & t^*\hsE_1 \ar[d]^{t^*\hphi} \\
s^*\hsE_2 \ar[r]^{\hPsi_2} & t^*\hsE_2
}
\]
The key aspect of this diagram is that while $\hphi$ is purely formal, it intertwines formal representations $\hPsi_1, \hPsi_2$ that do extend holomorphically to $\G$.  As a result, the knowledge of $\hphi$ and $\Psi_1$ is enough to reconstruct a convergent power series for $\Psi_2$ and vice-versa.  We record this observation as follows.
\begin{theorem}\label{condediv}
Let $\hphi$ be a formal isomorphism between $\sA$-representations $(\sE_1,\nabla_1)$, $(\sE_2,\nabla_2)$ along $\hX$, so that 
\[
\hphi\circ \hnabla_1 = \hnabla_2 \circ \hphi.
\]
If $\Psi_1$ is the integration of $\nabla_1$ to a groupoid representation, then the formal isomorphism along $\hG$ given by
\begin{equation}\label{pwrtsfm}
\Sigma(\hPsi_1,\hphi) = (t^*\hphi) \cdot \hPsi_1 \cdot (s^*\hphi)^{-1}
\end{equation}
coincides with the restriction to $\hG$ of the $\G$-representation $\Psi_2$ integrating $(\sE_2,\nabla_2)$.  In particular, $\Sigma(\hPsi_1,\hphi)$, when represented in coordinates centred at a point in $\hG$, is a power series in two variables with a nontrivial domain of convergence.
\end{theorem}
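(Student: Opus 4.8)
The plan is to read off both assertions from the commutative square displayed immediately before the statement, combined with the fact, guaranteed by \autoref{integrep}, that $\Psi_2$ is a \emph{genuine} holomorphic representation of $\G=\FG{\X,\D}$. Granting the square, the identity $\Sigma(\hPsi_1,\hphi)=\hPsi_2$ is a one-line rearrangement, and convergence is then inherited for free from the holomorphicity of $\Psi_2$. The conceptual content worth stressing is that the divergence of the formal gauge transformation $\hphi$ cancels out of the combination $\Sigma$, precisely because that combination equals the restriction of something already known to be holomorphic.

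First I would record the square carefully and extract the first claim. The restrictions $\hPsi_1,\hPsi_2$ of the integrated representations to the formal neighbourhood $\hG$ of $\G|_\D$ are flat along the source foliation $\sF$, by the construction in the proof of \autoref{integrep}. Since $\hphi$ satisfies $\hphi\circ\hnabla_1=\hnabla_2\circ\hphi$, its target pullback $t^*\hphi$ is $\sF$-flat, while $s^*\hphi$ is constant along the (formal) source fibres and hence trivially $\sF$-flat. Along the identity bisection $\gid(\hX)$ both composites $t^*\hphi\circ\hPsi_1$ and $\hPsi_2\circ s^*\hphi$ restrict to $\hphi$ itself; as all four arrows are $\sF$-flat and $\gid(\hX)$ meets every source fibre, commutativity propagates over all of $\hG$. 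Rearranging the resulting identity $t^*\hphi\circ\hPsi_1=\hPsi_2\circ s^*\hphi$ and using that $\hphi$ is a formal isomorphism, so that $s^*\hphi$ is invertible, gives
$$
\hPsi_2 = (t^*\hphi)\cdot\hPsi_1\cdot(s^*\hphi)^{-1} = \Sigma(\hPsi_1,\hphi),
$$
which is the coincidence asserted in the theorem.

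For the convergence statement I would invoke \autoref{integrep} once more: $(\sE_2,\nabla_2)$ integrates to a holomorphic bundle isomorphism $\Psi_2\in\cohlgy[0]{\G,\sHom{s^*\sE_2,t^*\sE_2}}$ over the two-dimensional complex manifold $\G$. Fix a point $g_0\in\G|_\D$ lying over $p\in\D$ and choose local holomorphic coordinates near $g_0$---for instance the Stokes coordinates $(z,u)$ of \autoref{thm:twfunline}, in which $z$ is the transverse coordinate cut out by the formal completion along $\G|_\D$ and $u$ runs along the source fibre (which is the isotropy group of $g_0$). In these coordinates $\Psi_2$ is given by an honestly convergent power series in $(z,u)$, and passing to $\hG$ amounts to taking its Taylor expansion in $z$. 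Since the formally computed $\Sigma(\hPsi_1,\hphi)$ reproduces exactly this expansion by the identity above, it converges on a neighbourhood of $g_0$, yielding the claimed nontrivial domain of convergence.

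The main obstacle is making the formal square rigorous: one must verify that the flatness-and-propagation argument of \autoref{integrep}, which there runs over the honest source fibres of $\G$, remains valid after restriction to $\hG$, where the source fibres survive only formally. Concretely this amounts to checking that formal completion along $\G|_\D$ commutes with the pullbacks $s^*,t^*$ and with the partial $\sF$-connection, so that the principle ``$\sF$-flat and agreeing on $\gid(\hX)$ implies equal on $\hG$'' still applies. Once that is secured the analysis is over: the entire weight of the result rests on identifying $\Sigma$ with the restriction of the already-holomorphic $\Psi_2$, after which no further estimate on the divergent series $\hphi$ is required.
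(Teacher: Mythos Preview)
Your proposal is correct and follows essentially the same route as the paper: the paper's argument is precisely the commutative square you reproduce, justified ``by the same reasoning as in the proof of~\autoref{integrep}'' (the $\sF$-flatness propagation from the identity bisection), followed by the observation that $\Psi_2$ is genuinely holomorphic on $\G$. You have in fact been more explicit than the paper about why the square commutes and about the technical point of transporting the flatness argument to the formal completion $\hG$, which the paper leaves implicit.
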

\begin{remark}
From~\eqref{pwrtsfm}, it is clear that the convergent series $\Sigma(\hPsi_1,\hphi)$ is determined by $\hPsi_1$ and $\hphi$ in such a way that its terms of degree $k$ depend only on the finitely many terms of $\hPsi_1$ and $\hphi$ of degree at most $k$.   \qed
\end{remark}
\begin{example}
The inhomogeneous differential equation $z^2f'=f-z$ with formal solution~\eqref{firstdivex} may be recast as the problem of finding an isomorphism 
\begin{equation*}
 	\phi = \mat{1 & f\\ & 1}
\end{equation*} 
between the rank two connections $\nabla_1, \nabla_2$ given by 
\begin{equation}
 	\nabla_1=d + \mat{-1 & \\ & 0}z^{-2}dz,\hspace{3em}
		\nabla_2=d + \mat{-1 & z\\ & 0}z^{-2}dz.
 \end{equation} 
Indeed, the equation $\phi\circ\nabla_1 = \nabla_2\circ\phi$ holds if and only if $z^2f'=f-z$.  With the formal solution~\eqref{firstdivex} in hand, according to~\autoref{condediv} we need only solve the diagonal system $\nabla_1$ in order to obtain convergent series solutions to $\nabla_2$.  

Both $\nabla_1, \nabla_2$ are representations of $\tshf{\AF[1]}(-2\cdot 0)$, and so the corresponding groupoid representations $\Psi_1, \Psi_2$ are defined on $\Sto{2}$.  Although $\nabla_2$ has nontrivial monodromy, for convenience we will use the groupoid $\Pair{\AF[1],2\cdot p}$, over which $\Psi_2$ is multivalued.  Also, instead of the coordinates $(z,u)$ used in~\autoref{paircords}, we use $(z,\mu)$, where $\mu$ is the additive function given in~\eqref{essfn} for $k=2$.  Thus, $\mu = u(1+zu)^{-1}$, which is well-defined since $1+zu\neq 0$.  In these coordinates, we have source and target maps 
\begin{equation}
	s(z,\mu)=z,\hspace{3em} t(z,\mu) = z(1-z\mu)^{-1},
\end{equation}
and the representation $\Psi_1$ integrating the diagonal connection is given by 
\begin{equation}
	\Psi_1=\mat{e^\mu & \\ & 1}.
\end{equation}
Conjugating by the formal solution as in~\eqref{pwrtsfm}, we obtain the power series
\begin{equation}
	\hPsi_2 = \mat{1 & t^*\hf\\ & 1}\mat{e^\mu & \\ & 1}\mat{1 & -s^*\hf\\  & 1} = \mat{e^\mu & t^*\hf - e^\mu s^*\hf\\ &1}.
\end{equation}
Using the divergent series $\hf = \sum_{n=0}^\infty n!\,z^{n+1}$ and computing in power series about $(z,\mu)=(0,0)$, we obtain
\begin{equation}\label{convexpand}\begin{aligned}
	t^*\hf-e^\mu s^*\hf &= -\sum_{i=0}^\infty\sum_{j=0}^\infty \frac{z^{i+1}\mu^{i+j+1}}{(i+1)(i+2)\cdots(i+j+1)},
\end{aligned}\end{equation}
which is a convergent power series in two variables for the representation $\Psi_2$.  

In this example, we may solve the system using special functions; in this way we obtain the fact that~\eqref{convexpand} is a Taylor expansion of
\begin{equation}
 	\rho(z,u)=e^\frac{z\mu-1}{z}\left(\mathrm{Ei}(\tfrac{1-z\mu}{z})-\mathrm{Ei}(\tfrac{1}{z})\right)
 \end{equation}
 about the removable singularity at $(0,0)$.  The multivaluedness of $\rho$ is resolved near $(0,0)$ by the requirement $\rho(z,0)=0$.  \qed
\end{example}

\begin{example}[\airylabel]\label{airylast}
Recall from~\autoref{ex:airy} that the Airy system on $\PP[1]$ has a single pole of degree three at infinity. For clarity of exposition, we perform a modification by $h=\mathrm{diag}(1, -z^{-1})$, so that near the pole we have the system
$$
h\nabla^\Airy h^{-1} =  d + \begin{pmatrix}
0 & z \\
1 & 0
\end{pmatrix}z^{-3}dz,
$$
which we continue to call the Airy system, and which has fundamental solution
$$
\begin{pmatrix}
\Ai\phantom{'}(z^{-1}) & \Bi\phantom{'}(z^{-1}) \\
\Ai'(z^{-1}) & \Bi'(z^{-1})
\end{pmatrix}.
$$
As observed by Stokes in his 1847 paper~\cite[Eq.~14]{Stokes1847} (see also Olver~\cite[Appendix]{MR1429619}), the modification $H=\mathrm{diag}(z^{1/4},z^{-1/4})$,
composed with the formal (divergent) isomorphism 
$$ 
\hphi = \mat{l(-\zeta)&\phantom{-}l(\zeta)\\-m(-\zeta)&m(\zeta)},\hspace{2em} \zeta = \tfrac{3}{2} z^\frac{3}{2},
$$
where $l(\zeta)=\sum_{n=0}^\infty l_n\zeta^n$ and $m(\zeta)=\sum_{n=0}^\infty m_n\zeta^n$ are defined by
$$
l_n = \frac{2^n\Gamma(3n+\frac{1}{2})}{3^{3n}(2n)!\Gamma(\frac{1}{2})},\hspace{3em} m_n = -\frac{6n+1}{6n-1}l_n,
$$
takes the Airy system to a diagonal system with fundamental solution
$$
\psi_0 = 
\begin{pmatrix}e^{ -\frac{2}{3} z^{-\frac{3}{2}}   } & \\ & e^{ \frac{2}{3} z^{-\frac{3}{2}}   }
\end{pmatrix}.
$$
Consequently, the Airy representation on the groupoid $\Pair{\AF[1],3\cdot 0}$ centred at the pole is formally given by
$$
\hPsi_{\Airy}=t^*(H\hphi) \cdot (t^*\psi_0 s^*\psi_0^{-1})\cdot  s^*(H\hphi)^{-1}.
$$
In the coordinates $(z,u)$ from~\autoref{paircords} in which $s=z$ and $t=z(1 + z^2u)$ are the groupoid source and target maps, the Airy representation~(cf.~\eqref{airyunivsol})
\begin{equation}\label{airyatinfty}
\Psi_{\Airy} = 
\begingroup
\renewcommand*{\arraystretch}{1.2}
\pi\begin{pmatrix}
\Ai\phantom{'}(\frac{1}{t})\Bi'(\frac{1}{s}) - \Bi\phantom{'}(\frac{1}{t})\Ai'(\frac{1}{s}) & \
  - \Ai\phantom{'}(\frac{1}{t})\Bi(\frac{1}{s}) + \Bi\phantom{'}(\frac{1}{t}) \Ai(\frac{1}{s}) \\
 \Ai'(\frac{1}{t})\Bi'(\frac{1}{s}) - \Bi'(\frac{1}{t})\Ai'(\frac{1}{s})& \ 
 -\Ai'(\frac{1}{t})\Bi(\frac{1}{s})+\Bi'(\frac{1}{t})\Ai(\frac{1}{s})
\end{pmatrix}
\endgroup
\end{equation}
can therefore be expressed by the following power series in $z$ and $u$:
\begin{equation*}\label{pwrseriesmess}
\tfrac{1}{2}
\begin{psmallmatrix}
	t^\frac{1}{4} & \\ & t^{-\frac{1}{4}}
\end{psmallmatrix}
\begin{psmallmatrix}
	\phantom{-}l(-\frac{3}{2}t^{\frac{3}{2}})&l(\frac{3}{2}t^{\frac{3}{2}})\\-m(-\frac{3}{2}t^{\frac{3}{2}})&m(\frac{3}{2}t^{\frac{3}{2}})
\end{psmallmatrix}
\begin{psmallmatrix}
	e^{\frac{2}{3}(s^{-\frac{3}{2}}-t^{-\frac{3}{2}})} & \\ & 
	e^{\frac{2}{3}(t^{-\frac{3}{2}}-s^{-\frac{3}{2}})}
\end{psmallmatrix}
\begin{psmallmatrix}
	m(\frac{3}{2}s^{\frac{3}{2}})&-l(\frac{3}{2}s^{\frac{3}{2}})\\m(-\frac{3}{2}s^{\frac{3}{2}})&\phantom{-}l(-\frac{3}{2}s^{\frac{3}{2}})
\end{psmallmatrix}
\begin{psmallmatrix}
	s^{-\frac{1}{4}} & \\ & s^{\frac{1}{4}}
\end{psmallmatrix}.
\end{equation*}
Expanding the product above and exhibiting terms of total degree six or less, we obtain
\begin{align}
\begingroup
\renewcommand*{\arraystretch}{1.2}
\begin{pmatrix}
1 + \tfrac{1}{2}u^2z - \tfrac{7}{6}u^3z^3+\tfrac{1}{24}u^4z^2 + \cdots & -uz + u^2z^3-\tfrac{1}{6}u^3z^2 +\cdots\\
-u + \tfrac{3}{2}u^2z^2-\tfrac{1}{6}u^3z +\cdots & 1 + \tfrac{1}{2}u^2z-\tfrac{4}{3}u^3z^3+\tfrac{1}{24}u^4z^2 +\cdots
\end{pmatrix},
\endgroup
\end{align}
verifying the fact that~\eqref{airyatinfty} has a removable singularity at $(z,u)=(0,0)$.
\qed
\end{example}

\bibliographystyle{hyperamsplain}
\bibliography{stokes1}

\end{document}